\definecolor{citegreen}{rgb}{0.2,0.2,0.6}
\newcommand\sm{\setminus}
\newcommand\kp{\kappa}
\renewcommand\Im{\mathrm{Im}\,}
\newcommand\bfE{\mathbf{E}}
\newcommand\bfH{\mathbf{H}}
\newcommand\M{\mathsf{M}}
\renewcommand\iff{{\textit{if and only if }}}
\definecolor{darkred}{rgb}{0.4,0.1,0.1}
\definecolor{darkblue}{rgb}{0.1,0.1,0.4}
\definecolor{lightdarkblue}{rgb}{0.1,0.1,0.6}
\definecolor{darkgrey}{rgb}{0.5,0.5,0.5}
\newcommand{\comm}[1]{}
\newcommand\sff{{\mathsf{f}}} 
\newcommand\zz{{\mathsf{z}}} 
\definecolor{darkred}{rgb}{0.5,0.1,0.1}
\newcommand\scH{\mathscr{H}}
\newcommand\arr{\rightarrow}
\newcommand\lm{\lambda}
\newcommand\s{\sigma}
\newcommand\ii{{\mathsf{i}}}
\newcommand\p{\partial}
\newcommand\one{\mathbbm{1}}
\newcommand\dd{{\mathsf{d}}}
\newcommand\omg{\omega}
\def\softness{0.4}
\definecolor{softred}{rgb}{1,\softness,\softness}
\definecolor{softgreen}{rgb}{\softness,1,\softness}
\definecolor{softblue}{rgb}{\softness,\softness,1}
\definecolor{softrg}{rgb}{1,1,\softness}
\definecolor{softrb}{rgb}{1,\softness,1}
\definecolor{softgb}{rgb}{\softness,1,1}
\newcounter{counter_a}
\newenvironment{myenum}{\begin{list}{{\rm(\roman{counter_a})}}%
{\usecounter{counter_a}
\setlength{\itemsep}{1.ex}\setlength{\topsep}{1.5ex}
\setlength{\leftmargin}{5ex}\setlength{\labelwidth}{5ex}}}{\end{list}}
\newcommand{\eg}{{\it e.g.}\,}
\newcommand{\ie}{{\it i.e.}\,}
\newcommand{\cf}{{\it cf.}\,}
\numberwithin{figure}{section}
\numberwithin{equation}{section}
\theoremstyle{plain}
\newtheorem*{thm*}{Theorem}
\newtheorem{thm}{Theorem}[section]
\newtheorem{lem}[thm]{Lemma}
\newtheorem{prop}[thm]{Proposition}
\newtheorem{dfn}[thm]{Definition}
\theoremstyle{remark}
\newtheorem{remark}[thm]{Remark}
\theoremstyle{plain}
\newcommand{\supp}{\mathrm{supp}\,}
\newcommand{\beu}{\begin{equation*}}
\newcommand{\eeu}{\end{equation*}}
\newcommand{\besu}{\begin{equation*}
\begin{aligned}}
\newcommand{\eesu}{\end{aligned}
\end{equation*}}
\newcommand{\bes}{\begin{equation}
\begin{aligned}}
\newcommand{\ees}{\end{aligned}
\end{equation}}
\newcommand\cB{\mathcal B}
\newcommand\cH{\mathcal H}
\newcommand\frh{\mathfrak h}
\newcommand\ov{\overline}
\newcommand\wt{\widetilde}
\newcommand\wh{\widehat}
\newcommand\void[1]{}
\newcommand\eps{\varepsilon}
\newcommand\ran{{\rm ran\,}}
      \def\dC{{\mathbb C}}
   \def\dN{{\mathbb N}}   
      \def\dR{{\mathbb R}}
   \def\dZ{{\mathbb Z}}
   \def\cB{{\mathcal B}}   
   \def\cH{{\mathcal H}}   
      \def\cO{{\mathcal O}}
\renewcommand{\div}{\mathrm{div}\,}
\newcommand{\grad}{\mathrm{grad}\,}
\newcommand{\dom}{\mathrm{dom}\,}
\renewcommand{\one}{\mathbbm{1}}
\numberwithin{equation}{section}
\theoremstyle{plain}
\renewcommand\H{\mathsf{H}}
\newcommand\T{\mathsf{T}}
\newcommand\E{\mathsf{E}}
\newcommand{\spann}{\mathrm{span}\,}
\newcommand{\Ups}{\Upsilon}
\newcommand{\Th}{\Theta}
\newcommand{\TE}{{\bf{TE}}}
\newcommand{\TM}{{\bf{TM}}}
\newcommand{\xx}{{\textsf{x}}}
\newcommand{\yy}{{\textsf{y}}}
\newcommand{\sfa}{\textsf{a}}
\newcommand{\sfb}{\textsf{b}}
\newcommand{\cl}{\mathrm{cl}}
\renewcommand{\tt}{\theta}
\renewcommand{\aa}{\alpha}
\newcommand\bfo{\mathbf{0}}
\newcommand{\curl}{\mathrm{curl}\,}
\newcommand{\divergence}{\mathrm{div}\,}
\definecolor{DarkGreen}{rgb}{0,0.5,0.1} 
\newcommand\soutD{\bgroup\markoverwith
	{\textcolor{DarkGreen}{\rule[.5ex]{2pt}{1pt}}}\ULon}
\newcommand{\Hm}[1]{\leavevmode{\marginpar{\tiny%
			$\hbox to 0mm{\hspace*{-0.5mm}$\leftarrow$\hss}%
			\vcenter{\vrule depth 0.1mm height 0.1mm width \the\marginparwidth}%
			\hbox to
			0mm{\hss$\rightarrow$\hspace*{-0.5mm}}$\\\relax\raggedright #1}}}
\renewcommand{\div}{\mathrm{div}\,}
\begin{document}

\title{\textbf{\Large
Spectral analysis of photonic crystals made of thin rods
	}}
\author{Markus Holzmann\footnote{
Institut f\"{u}r Numerische Mathematik,
Technische Universit\"{a}t Graz,
Steyrergasse 30, A 8010 Graz, Austria,
\texttt{holzmann@math.tugraz.at}} \and
Vladimir Lotoreichik\footnote{
Department of Theoretical Physics,
Nuclear Physics Institute, Czech Academy of Sciences, 250 68,
\v{R}e\v{z} near Prague, Czechia,
\texttt{lotoreichik@ujf.cas.cz}}}
\maketitle


%
\begin{abstract}
  In this paper we address the question how to design photonic crystals
  that have photonic band gaps around a finite number of given frequencies.
  In such materials electromagnetic waves with these frequencies can not propagate; 
  this makes them interesting for a large number of applications.
  We focus on crystals made of periodically ordered thin rods 
  with high contrast dielectric properties.
  We show that the material parameters can be chosen in such a way 
  that transverse magnetic modes  with given frequencies can not propagate in the crystal. 
  At the same time, for any  frequency belonging to a predefined range there exists a transverse electric mode 
  that can propagate in the medium.
  These results are related to the spectral properties of a weighted Laplacian and
  of an elliptic operator of divergence type both acting in $L^2(\dR^2)$.
  The proofs rely on perturbation theory of linear operators, Floquet-Bloch
  analysis, and properties of Schr\"odinger operators with point interactions.
\end{abstract}

\footnotesize\emph{Key words and phrases.} photonic crystals, spectral gaps, inverse problem, thin rods, electromagnetic
waves, \TE- and \TM-modes, periodic differential operators, perturbation theory, Floquet-Bloch analysis, point interactions.

\footnotesize\emph{2010 Mathematics Subject Classification.}
47F05, 35B27, 35J15, 78A40.

\section{Introduction} 
\label{section_motivation}

\subsection{Motivation}
Photonic crystals gained a lot of attention in the recent decades both from the physical and the mathematical side.
An electromagnetic wave can propagate in the crystal
\iff its frequency does not belong to a photonic band gap. 
Therefore, photonic crystals can be seen as an optical analogue of semiconductors giving a physical motivation to study them.
The idea of designing periodic dielectric materials with photonic band gaps was proposed in~\cite{john_1987, yablonovitch_1987}. 
In the recent years 
a great advance in fabrication of such crystals was achieved.

Despite a substantial progress in the physical and mathematical investigation of photonic crystals 
with a given geometry
(see~\cite{DLPSW, joannopoulos_2008} and the references therein), 
the important task of designing photonic crystals
having band gaps of a certain predefined structure
still remains  challenging.
In this paper we are interested in the following 
inverse problem:
\[	
	\begin{matrix}
	\text{\emph{How to design a photonic crystal such that finitely many}}\\
	\text{\emph{predefined frequencies
	$\omega_1, \dots, \omega_N$ belong to photonic band gaps?}}\end{matrix}
\]

In order to tackle this problem we employ a special class of photonic crystals made of 
very thin infinite rods with high dielectric permittivity embedded into vacuum. To be more precise, 
let $\wh\Gamma$ be a parallelogram in $\dR^2$ which is spanned by two linearly independent vectors and let the points 
$\yy^{(1)}, \dots, \yy^{(N)} \in \wh{\Gamma}$ be pairwise distinct. 
The basis cell of the crystal consists of $N$ infinite rods with large relative dielectric permittivities 
$\eps^{(n)} \gg 1$, $n = 1, \dots, N$, whose cross sections 
are small bounded domains in $\dR^2$
localized near the points $\yy^{(n)}$, $n = 1, \dots, N$,
and surrounded by vacuum with relative dielectric permittivity $\varepsilon = 1$. 
Then the crystal is built by repeating the basis cell in a periodic way such that the whole Euclidean space~$\dR^3$ is filled; 
\cf Figure~\ref{figure_crystal} and 
Subsection~\ref{section_main_result} for a more precise definition.

Special crystals of the above type have already been investigated 
in~\cite{joannopoulos_2008, joannopoulos_villeneuve_97,
maradudin_93, meade_93, plihal_1991} by physical and numerical experiments.
They were one of the first photonic crystals treated in the literature because they are comparably simple to produce.
The results in the above mentioned papers 
indicate that these crystals may have band gaps for electromagnetic waves polarized in a special way. 
Our goal is to provide an analytic proof of this and related results.

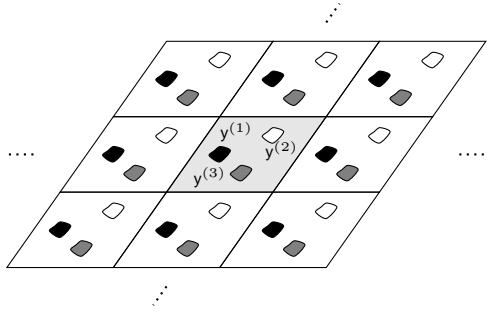
\begin{figure}[h!] 
	\centering
  \begin{tikzpicture}[xscale=0.7, yscale=0.5]
      \definecolor{fillcolor}{rgb}{0.900000,0.9,0.900000}
      \filldraw[fill=fillcolor,draw=black] (-1, -1)--(1, -1)--(2, 1)--(0, 1)--cycle;
      \filldraw[fill=black,draw=black] plot [smooth cycle] coordinates {(0,-0.2) (0.2,0.1) (0.01,0.21) (-0.1,0.1) (-0.2,-0.03)};
      \draw (-0.2, 0.6)
          node[right] {\tiny $\yy^{(1)}$};;
      \filldraw[fill=white,draw=black] plot [smooth cycle] coordinates {(1,0.3) (1.2,0.6) (1.01,0.71) (0.9,0.6) (0.8,0.47)};
      \draw (0.65, 0.1)
          node[right] {\tiny $\yy^{(2)}$};;
      \filldraw[fill=gray,draw=black] plot [smooth cycle] coordinates {(0.4,-0.7) (0.6,-0.4) (0.41,-0.29) (0.3,-0.4) (0.2,-0.53)};
      \draw (-0.7, -0.6)
          node[right] {\tiny $\yy^{(3)}$};;

      \draw (1, -1)--(3, -1)--(4, 1)--(2, 1)--cycle;
      \filldraw[fill=black,draw=black] plot [smooth cycle] coordinates {(2,-0.2) (2.2,0.1) (2.01,0.21) (1.9,0.1) (1.8,-0.03)};
      \filldraw[fill=white,draw=black] plot [smooth cycle] coordinates {(3,0.3) (3.2,0.6) (3.01,0.71) (2.9,0.6) (2.8,0.47)};
      \filldraw[fill=gray,draw=black] plot [smooth cycle] coordinates {(2.4,-0.7) (2.6,-0.4) (2.41,-.29) (2.3,-0.4) (2.2,-0.53)};

      \draw (-3, -1)--(-1, -1)--(0, 1)--(-2, 1)--cycle;
      \filldraw[fill=black,draw=black] plot [smooth cycle] coordinates {(-2,-0.2) (-1.8,0.1) (-1.99,0.21) (-2.1,0.1) (-2.2,-0.03)};
      \filldraw[fill=white,draw=black] plot [smooth cycle] coordinates {(-1,0.3) (-0.8,0.6) (-0.99,0.71) (-1.1,0.6) (-1.2,0.47)};
      \filldraw[fill=gray,draw=black] plot [smooth cycle] coordinates 
          {(-1.6,-0.7) (-1.4,-0.4) (-1.59,-0.29) (-1.7,-0.4) (-1.8,-0.53)};

      \draw (2, 1)--(4, 1)--(5, 3)--(3, 3)--cycle;
      \filldraw[fill=black,draw=black] plot [smooth cycle] coordinates {(3,1.8) (3.2,2.1) (3.01,2.21) (2.9,2.1) (2.8,1.97)};
      \filldraw[fill=white,draw=black] plot [smooth cycle] coordinates {(4,2.3) (4.2,2.6) (4.01,2.71) (3.9,2.6) (3.8,2.47)};
      \filldraw[fill=gray,draw=black] plot [smooth cycle] coordinates {(3.4,1.3) (3.6,1.6) (3.41,1.71) (3.3,1.6) (3.2,1.47)};

      \draw (0, -3)--(2, -3)--(3, -1)--(1, -1)--cycle;
      \filldraw[fill=black,draw=black] plot [smooth cycle] coordinates {(1,-2.2) (1.2,-1.9) (1.01,-1.79) (0.9,-1.9) (0.8,-2.03)};
      \filldraw[fill=white,draw=black] plot [smooth cycle] coordinates {(2,-1.7) (2.2,-1.4) (2.01,-1.29) (1.9,-1.4) (1.8,-1.53)};
      \filldraw[fill=gray,draw=black] plot [smooth cycle] coordinates {(1.4,-2.7) (1.6,-2.4) (1.41,-2.29) (1.3,-2.4) (1.2,-2.53)};

      \draw (-2, -3)--(0, -3)--(1, -1)--(-1, -1)--cycle;
      \filldraw[fill=black,draw=black] plot [smooth cycle] coordinates 
          {(-1,-2.2) (-0.8,-1.9) (-0.99,-1.79) (-1.1,-1.9) (-1.2,-2.03)};
      \filldraw[fill=white,draw=black] plot [smooth cycle] coordinates {(0,-1.7) (0.2,-1.4) (0.01,-1.29) (-0.1,-1.4) (-0.2,-1.53)};
      \filldraw[fill=gray,draw=black] plot [smooth cycle] coordinates 
          {(-0.6,-2.7) (-0.4,-2.4) (-0.59,-2.29) (-0.7,-2.4) (-0.8,-2.53)};

      \draw (-4, -3)--(-2, -3)--(-1, -1)--(-3, -1)--cycle;
      \filldraw[fill=black,draw=black] plot [smooth cycle] coordinates 
          {(-3,-2.2) (-2.8,-1.9) (-2.99,-1.79) (-3.1,-1.9) (-3.2,-2.03)};
      \filldraw[fill=white,draw=black] plot [smooth cycle] coordinates 
          {(-2,-1.7) (-1.8,-1.4) (-1.99,-1.29) (-2.1,-1.4) (-2.2,-1.53)};
      \filldraw[fill=gray,draw=black] plot [smooth cycle] coordinates 
          {(-2.6,-2.7) (-2.4,-2.4) (-2.59,-2.29) (-2.7,-2.4) (-2.8,-2.53)};

      \draw (0, 1)--(2, 1)--(3, 3)--(1, 3)--cycle;
      \filldraw[fill=black,draw=black] plot [smooth cycle] coordinates {(1,1.8) (1.2,2.1) (1.01,2.21) (0.9,2.1) (0.8,1.97)};
      \filldraw[fill=white,draw=black] plot [smooth cycle] coordinates {(2,2.3) (2.2,2.6) (2.01,2.71) (1.9,2.6) (1.8,2.47)};
      \filldraw[fill=gray,draw=black] plot [smooth cycle] coordinates {(1.4,1.3) (1.6,1.6) (1.41,1.71) (1.3,1.6) (1.2,1.47)};

      \draw (-2, 1)--(0, 1)--(1, 3)--(-1, 3)--cycle;
      \filldraw[fill=black,draw=black] plot [smooth cycle] coordinates {(-1,1.8) (-0.8,2.1) (-0.99,2.21) (-1.1,2.1) (-1.2,1.97)};
      \filldraw[fill=white,draw=black] plot [smooth cycle] coordinates {(0,2.3) (0.2,2.6) (0.01,2.71) (-0.1,2.6) (-0.2,2.47)};
      \filldraw[fill=gray,draw=black] plot [smooth cycle] coordinates 
          {(-0.6,1.3) (-0.4,1.6) (-0.59,1.71) (-0.7,1.6) (-0.8,1.47)};

      \draw[dotted, thick] (4.5, 0) -- (5, 0);
      \draw[dotted, thick] (-3.5, 0) -- (-4, 0);
      \draw[dotted, thick] (2, 3.5) -- (2.25, 4);
      \draw[dotted, thick] (-1, -3.5) -- (-1.25, -4);
  \end{tikzpicture}
  \caption{Cross section of the crystal. The basis period cell is colored in gray and contains three rods ($N=3$).}
	\label{figure_crystal}
\end{figure}

\subsection{Maxwell's equations and propagation of electromagnetic waves}
\label{ssec:Maxwell_Operator}
Under conventional physical assumptions for our 
model (absence of currents and electric charges, 
linear constitutive relations and no magnetic 
properties, i.e. the relative magnetic permeability satisfies $\mu \equiv 1$)
and a suitable choice of the units, 
Maxwell's equations for time harmonic fields
take the following simple form (see \cite{figotin_kuchment_1996_2}):
\begin{equation} \label{Maxwell_equations}
  \div (\eps \bfE) = 0, 
  \qquad 
  \curl \bfE =  \frac{\ii \omg}{c} \bfH,
  \qquad 
  \div \bfH = 0, 
  \qquad 
  \curl \bfH = -\frac{\ii \omg}{c} \eps \bfE.
\end{equation}
Here, the three-dimensional vector fields $\bfE$ and $\bfH$ are
the electric and the magnetic field, respectively, 
$\omega > 0$ is the frequency of the wave, $\varepsilon$
is the relative dielectric permittivity and $c > 0$ stands for the speed of light. 
Choose a system of coordinates such that the $x_3$-axis is parallel to the rods
building our photonic crystal.
In this paper, we are interested in $x_3$-independent waves, \ie
$\bfE = \bfE(x_1, x_2)$ and $\bfH = \bfH(x_1, x_2)$.
This assumption is reasonable, 
as the physical parameters also depend only on $x_1$ and $x_2$.
In the physical literature such waves are often called standing waves, 
as they propagate strictly parallel to the 
$x_1$-$x_2$-plane perpendicular to the rods and not in the $x_3$-direction.
The frequency $\omega > 0$ belongs to a photonic band gap
if the Maxwell equations~\eqref{Maxwell_equations} possess 
no bounded solutions.

Maxwell's equations can be regarded as a generalized eigenvalue problem for the Maxwell operator
\begin{equation*}
	  \M \begin{pmatrix} \bfE \\ \bfH \end{pmatrix} 
		= \begin{pmatrix} 
  		0            & \ii c \eps^{-1} \curl \\ 
        -\ii c\, \curl & 0 
      \end{pmatrix} 
      \begin{pmatrix} \bfE \\ \bfH \end{pmatrix},
\end{equation*}
which is defined on an appropriate subspace of 
$L^2(\dR^2, \dC^3, \eps \dd x) \times L^2(\dR^2, \dC^3, \dd x)$
that takes the constraints $\divergence (\eps\bfE) = 0$ 
and $\divergence \bfH = 0$ into account; 
\cf \cite{figotin_kuchment_1996_2} for more details. 
According to~\cite[Sec. 7.1]{figotin_kuchment_1996_2}
the operator $\M$ is self-adjoint in 
$L^2(\dR^2, \dC^3, \eps \dd x) \times L^2(\dR^2, \dC^3, \dd x)$.
Using periodicity and the results of~\cite{klein_koines_seifert_2002} it can be shown 
that $\omega$ belongs to a photonic band gap of the crystal  
\iff $\omega\notin\s(\M)$.
Therefore, the existence and location of photonic band gaps can be analyzed
by means of the spectral analysis of $\M$.

Since $\eps$ is periodic in $x_1$ and $x_2$ and
independent of $x_3$, the operator $\M$ can be decomposed as 
$\M = \M_1 \oplus \M_2$, where $\M_1$ acts on 
so-called transverse magnetic 
\TM-modes having the form
\begin{equation*}
  \bfE = (0, 0, E_3)^{\top}, \qquad \bfH = (H_1, H_2, 0)^{\top},
\end{equation*}
and $\M_2$ acts on transverse electric \TE-modes given by
\begin{equation*}
  \bfE = (E_1, E_2, 0)^{\top}, \qquad \bfH = (0, 0, H_3)^{\top};
\end{equation*}
see \cite[Sec. 7.1]{figotin_kuchment_1996_2}. 
Therefore,
it holds $\s(\M) = \s(\M_1)\cup\s(\M_2)$ 
and it suffices to perform the spectral analysis of $\M_1$ and $\M_2$ separately to characterize $\s(\M)$.
The spectra of $\M_j$, $j=1,2$, have their own
independent physical
meaning when taking polarization of electromagnetic waves
to \TE- and \TM-modes into account.
Moreover, $\s(\M_j)$, $j=1,2$, are in simple direct correspondence with the spectra of certain scalar differential operators
on $L^2(\dR^2)$; \cf Subsection~\ref{section_main_result}.

Several mathematical approaches to treat the spectral problems for the operators $\M_j$, $j=1,2$, have been developed.
Purely numerical methods are elaborated in \eg~\cite{maradudin_93, meade_93, plihal_1991}. 
A combination of numerical and analytical methods
is suggested in~\cite{hoang_plum_wieners2009}.
For a wide class of geometries a method
based on boundary integral equations 
is efficient~\cite{ammari_kang_lee_2009, AKL09}.
An analytic approach for high contrast media is proposed
by A.~Figotin and P.~Kuchment
for crystals of a different geometry from ours
which are composed of periodically ordered vacuum bubbles surrounded by an optically dense material with very 
large dielectric permittivity and of small width. In a series of 
papers~\cite{figotin_kuchment_1994, figotin_kuchment_1996_1, figotin_kuchment_1996_2, figotin_kuchment_1998_1, 
figotin_kuchment_1998_2}
these authors showed that such crystals have an arbitrarily large number of photonic band gaps. 
Their approach largely inspired the methods used in the present paper.

Finally, we point out that topics of recent interest
in this active research field include the analysis of guided perturbations
of photonic crystals~\cite{AS04, BHPW14, KO12},
of materials with non-linear constitutive relations~\cite{EKE12, ELT17},
and of photonic crystals made of metamaterials~\cite{CL13, E10}.

\subsection{Notations and statement of the main results} \label{section_main_result}
In order to formulate our main results, we fix some notations.
We set $\bfo := (0,0) \in \dR^2$. For $\xx \in\dR^2$ and $r > 0$ we define
$B_r(\xx) := \{\yy\in\dR^2\colon |\yy - \xx| < r\}$.
For $\aa,\beta \ge 0$ and $A, B \subset\dR^2$ we use the notation
\[
	\aa A + \beta B := \{\aa \xx + \beta \yy \in\dR^2\colon \xx \in A, \yy \in B\}.
\]	
For a measurable set $\Omega\subset\dR^2$
we denote its Lebesgue measure by $|\Omega|$ 
and its characteristic function by~$\one_\Omega$. 
As usual, $L^1(\Omega)$ stands for the space of integrable functions
over $\Omega$.  For $f\in L^1(\Omega)$ we introduce the notation
$\langle f\rangle_\Omega = \int_\Omega f(x) \dd x$. 
The $L^2$-space over $\Omega \subset \dR^2$ with the usual inner product
is denoted by $\big(L^2(\Omega),(\cdot,\cdot)_{L^2(\Omega)}\big)$
and the $L^2$-based Sobolev spaces by $H^k(\Omega)$, $k=1,2$,
respectively. For a self-adjoint operator $\T$ in a Hilbert space
we denote its spectrum by $\s(\T)$ and its resolvent set by $\rho(\T) := \dC \sm \s(\T)$.

Let $N \in \dN$ be fixed and let $\lm_1, \dots, \lm_N \in (0, \infty)$ be pairwise distinct;
without loss of generality we assume that $0 < \lambda_1 < \lambda_2 < \dots < \lambda_N$. 
These numbers are associated to the frequencies 
$\omg_1, \dots, \omg_N$ that are desired to be contained in photonic 
band gaps of the crystal via the relations $\lm_n = \left(\frac{\omega_n}{c}\right)^2$. 
Moreover, let $\sfa_1,\sfa_2 \in \dR^2$ be linearly independent. 
We set
\begin{equation} \label{Lambda_and_cell}
  \Lambda := 
  \big\{ n_1 \sfa_1 + n_2 \sfa_2\in\dR^2\colon (n_1, n_2) \in \dZ^2 \big\}
  \qquad\text{and}\qquad
  \wh{\Gamma} := \big\{ s_1 \sfa_1 + s_2 \sfa_2 \in\dR^2 \colon s_1, s_2 \in [0, 1) \big\}.
\end{equation}
For the points in $\Lambda$ we often use the notation $\yy_n = n_1 \sfa_1 + n_2 \sfa_2$, $n = (n_1, n_2) \in \dZ^2$.
%
%
%
%
%
%
We choose pairwise distinct points $\yy^{(1)}, \dots, \yy^{(N)} \in \wh\Gamma$ and define 
\begin{equation}\label{def:Y}
	Y := \big\{ \yy^{(1)}, \dots, \yy^{(N)} \big\}.
\end{equation}
Let $\Omega \subset \dR^2$ be a bounded domain
with $\bfo \in \Omega$ and let $r > 0$ be sufficiently small such that
\begin{equation*}
  \big(\yy^{(n)} + r \Omega\big) \cap \big(\yy^{(m)} + r \Omega\big) 
  = \varnothing, \qquad n \neq m.
\end{equation*}
For $n = 1, \dots, N$ we define 
\begin{equation} \label{def_mu}
	\mu_n(x) 
	:= 
	\frac{2 \pi}{\lm_n |\Omega|} x + 
	c_n x^2,
\end{equation}
where $c_n \in \dR$, $n=1,\dots,N$, are some constant parameters.
Finally, we introduce the function $w_r\colon\dR^2\arr\dR$ by
\begin{equation} \label{def_eps}
  w_r
  := 
  1 + 
  \frac{1}{r^2}
  \sum_{n=1}^N\mu_n
  \left(\frac{1}{|\ln r|}\right) 
  \one_{\Lambda + \yy^{(n)} + r \Omega}.
\end{equation}
The relative dielectric permittivity $\eps_r\colon\dR^3\arr\dR$,
which describes the physical properties of the crystal,
is expressed through $w_r$ by
$\eps_r(x_1,x_2,x_3) := w_r(x_1,x_2)$.
In order to treat the spectral problem for the associated Maxwell operator 
$\M = \M_1\oplus \M_2$ described in Subsection~\ref{ssec:Maxwell_Operator},
we introduce two partial differential operators in $L^2(\dR^2)$ by 
\begin{subequations}
\begin{align}
	   \Th_r f & := -w_r^{-1} \Delta f, 
	   &
	   \dom \Th_r  & := H^2(\dR^2),	
	  \label{def_Theta}\\
 	   \Ups_r f  & := -\div (w_r^{-1} \grad f), 
 	   &
       \dom \Ups_r & := 
        \big\{ f \in H^1(\dR^2) \colon \div (w_r^{-1} \grad f) 
        \in L^2(\dR^2) \big\}.\label{def_Upsilon}
\end{align}
\end{subequations}
%
%
According to \cite[Sec. 7.1]{figotin_kuchment_1996_2} we have 
\begin{equation*}
\begin{split}
	& \omg \in \s(\M_1)
	\quad \Longleftrightarrow \quad 
	\left( \frac{\omg}{c} \right)^2 \in \s(\Th_r),	\\
 	& \omg \in \s(\M_2)
	\quad \Longleftrightarrow \quad
	\left( \frac{\omg}{c} \right)^2 \in \s(\Ups_r).
\end{split}	
\end{equation*}

Following the strategy of~\cite{figotin_kuchment_1996_2,figotin_kuchment_1998_2}
in order to investigate the spectral properties of $\Th_r$ we introduce
a family of auxiliary Schr\"odinger operators 
\begin{equation}\label{def_H}
  \H_{r,\lm} f := -\Delta f - \lm (w_r - 1) f, 
  \qquad \dom \H_{r,\lm} := H^2(\dR^2),\qquad \lm\ge 0.
\end{equation}
It is not difficult to check that 
\[
	\lm \in \s(\Th_r)\quad \Longleftrightarrow \quad \lm\in\s(\H_{r,\lm}).
\]
We show that the Schr\"odinger operators $\H_{r,\lm}$ 
converge (as $r \arr 0+$) in the norm resolvent sense to Hamiltonians with point interactions supported on $Y + \Lambda$. 
This convergence result is already demonstrated in a more general setting 
in~\cite{behrndt_holzmann_lotoreichik_2014}, but for our special form of 
$w_r$ we provide a refined analysis of the approximation 
including an estimate for the order of convergence.
For similar results in the case of a single point interaction in $\dR^2$ 
and in other space dimensions see \cite{AGHH87, AGHK84, holden_hoegh_krohn_johannesen1984} 
or the monograph~\cite{albeverio_gesztesy_hoegh-krohn_holden} and the references therein. 
Using the known spectral properties of these limit operators with point interactions and continuity arguments 
one can prove that the initially given number $\lm_n$ belongs to a gap of 
$\s\big(\H_{r,\lm_n}\big)$, 
$n =  1, \dots, N$, if the geometry of the crystal and 
the parameters in the definition of $w_r$ are chosen appropriately. 
This leads to the existence of gaps in $\s(\Th_r)$ in the vicinities of 
$\lm_n$ which is the first main result of this paper
and whose proof is provided in Section~\ref{section_weighted}.
%
\begin{thm} \label{theorem_spectrum_Theta}
	There exist linearly independent vectors 
	$\sfa_1, \sfa_2 \in \dR^2$ and coefficients $c_1, \dots, c_N$
	such that
	\begin{equation*}
    	\bigcup_{n=1}^N 
    	\big(\lm_n - \lm_1 r^2|\ln r|, \lm_n + \lm_1 r^2|\ln r|\big) 
    	\subset \rho(\Th_r)
	\end{equation*}
	for all sufficiently small $r > 0$.
\end{thm}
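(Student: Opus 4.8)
The plan is to use the reduction recorded above, $\lm\in\s(\Th_r)\Longleftrightarrow\lm\in\s(\H_{r,\lm})$, so that the asserted inclusion is equivalent to the self-consistent statement that $\lm\notin\s(\H_{r,\lm})$ for every $\lm$ in the intervals $(\lm_n-\lm_1 r^2|\ln r|,\lm_n+\lm_1 r^2|\ln r|)$. Since $w_r$ is $\Lambda$-periodic, the first step is a Floquet--Bloch decomposition $\H_{r,\lm}=\int^{\oplus}\H_{r,\lm}(\theta)\,\dd\theta$ over the Brillouin zone, the fibres $\H_{r,\lm}(\theta)$ acting on $\wh\Gamma$ with $\theta$-quasiperiodic conditions and having compact resolvent. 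It then suffices to produce, for all small $r$, a neighbourhood of $\lm_n$ that is free of fibre eigenvalues uniformly in $\theta$, and to check that this neighbourhood survives the self-consistency in $\lm$.

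I would first fix $\lm=\lm_n$ and analyse $\s(\H_{r,\lm_n})$ through the point-interaction approximation in its quantitative form announced above. Writing the attractive potential as $\lm_n(w_r-1)=\tfrac{1}{r^2}\sum_m \lm_n\mu_m(1/|\ln r|)\one_{\Lambda+\yy^{(m)}+r\Omega}$, a Birman--Schwinger/Krein reduction on each fibre expresses the eigenvalue condition through a finite matrix built from the quasiperiodic Green's function of $-\Delta_\theta-\lm_n$ and the localized couplings. The decisive point is the tuning of $\mu_m$: the coefficient of $|\ln r|^{-1}$ in the $m$-th coupling equals $2\pi\lm_n/(\lm_m|\Omega|)$, which is the critical two-dimensional value $2\pi/|\Omega|$ \emph{exactly} for $m=n$. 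Hence, as $r\to0+$, only the $n$-th center contributes a genuine point interaction at energy $\lm_n$ (of strength governed, at next order, by $c_n$), whereas every other center is off-critical and becomes spectrally inactive at $\lm_n$. This isolates a single resonant band near $\lm_n$, whose leading asymptotics are furnished by the limit operator and whose position is shifted by the free parameter $c_n$.

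The heart of the argument is then quantitative: using the explicit convergence rate, uniformly in $\theta$, I would show that this resonant band is confined to an $O(|\ln r|^{-1})$-window and, for an appropriate choice of the generators $\sfa_1,\sfa_2$ (common to all $n$) and of the coefficients $c_n$ (one per center), that a true gap of $\s(\H_{r,\lm_n})$ of width at least of order $r^2|\ln r|$ opens with $\lm_n$ inside it. This is where the refined error bound, rather than mere norm-resolvent convergence, is indispensable, since in the limit $r=0$ the gap collapses: the positive part of the spectrum of the limiting periodic point-interaction Hamiltonian fills $[0,\infty)$, because at every positive energy the regularized quasiperiodic Green's function has a pole in $\theta$ and therefore attains every finite strength.

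Finally I would remove the restriction $\lm=\lm_n$ and close the self-consistency. Writing $\H_{r,\lm}=\H_{r,\lm_n}+(\lm_n-\lm)(w_r-1)$ one cannot perturb in operator norm, because $\|w_r-1\|_\infty\sim (r^2|\ln r|)^{-1}$; instead I would track the resonant band eigenvalue by the Feynman--Hellmann identity $\partial_\lm E_j(\lm,\theta)=-(\psi_j,(w_r-1)\psi_j)_{L^2(\wh\Gamma)}\le 0$, and note that for the resonant mode the right-hand side is only of order $|\ln r|^{-1}$, so that $\partial_\lm\big(E_j(\lm,\theta)-\lm\big)\le-1$ while $E_j(\lm,\theta)$ itself drifts by at most $O(r^2)$ across an interval of half-width $\lm_1 r^2|\ln r|$. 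Consequently the self-consistent crossings $\lm\in\s(\H_{r,\lm})$ are transversal, and the fixed-energy gap established at $\lm_n$ persists as a gap of the self-consistent spectrum of the stated width, which transfers back to $\Th_r$. The main obstacle, as indicated, is the quantitative step three: controlling the Birman--Schwinger determinant uniformly in the quasimomentum $\theta$ --- in particular near the free-band crossings where the fibre resolvent of $-\Delta_\theta-\lm_n$ becomes singular --- with errors small enough to resolve a gap of size $r^2|\ln r|$, while simultaneously verifying that the off-critical centers do not reintroduce spectrum close to $\lm_n$.
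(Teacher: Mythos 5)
Your opening reduction ($\lm\in\s(\Th_r)\Leftrightarrow\lm\in\s(\H_{r,\lm})$), your identification of the critical coupling (at energy $\lm_n$ only the sublattice $\yy^{(n)}+\Lambda$ is resonant, the other centers becoming inactive), and the identification of the limit as a point-interaction Hamiltonian all agree with the paper. But the argument then rests on a premise that is false, and this derails the rest. You claim that ``in the limit $r=0$ the gap collapses: the positive part of the spectrum of the limiting periodic point-interaction Hamiltonian fills $[0,\infty)$.'' This is exactly the opposite of the fact the proof needs, and of what is true: by \cite[Thm.~III.4.7]{albeverio_gesztesy_hoegh-krohn_holden} (Proposition~\ref{thm_spectrum_delta_op}) one has $\s(-\Delta_{\aa,\Lambda})=[E_0,E_1]\cup[E_2,\infty)$ with $E_2>0$, and $E_1<0$ whenever $\aa$ lies below an explicit threshold; moreover, by the scaling relation of Proposition~\ref{prop_unitary_equiv} the gap $(k^2E_1,k^2E_2)$ of the rescaled operator can be made to contain any prescribed compact interval (Proposition~\ref{thm_existance_spectral_gap_delta_op}), in particular $(\lm_1-\eta-2a,\lm_N+\eta+2a)$ with $\eta=2\pi|\Omega|^{-1}+1$. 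The free parameters in the statement --- the generators $\sfa_1,\sfa_2$ (i.e.\ the lattice scale) and the constants $c_n$ (which fix the couplings $\aa_n$) --- exist precisely to place every $\lm_n$ inside this gap of \emph{fixed, $r$-independent} size. Your pole argument for ``filling $[0,\infty)$'' is a non sequitur: near a pole of $\tt\mapsto g(E,\tt)$ the function is unbounded above \emph{and} below, but it is discontinuous there, so its range is a union of two half-lines which can --- and for $E$ in the gap does --- miss the value $\aa-\frac{1}{2\pi}(\gamma+\ln 2)$; unboundedness in both directions does not give surjectivity.

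The false premise then forces you into a program that cannot be carried out: you propose to open a gap of width $\sim r^2|\ln r|$ at the fixed energy $\lm_n$ by a Birman--Schwinger analysis whose errors are only $O(|\ln r|^{-1})$. Since $r^2|\ln r|\ll|\ln r|^{-1}$, an approximation with error $|\ln r|^{-1}$ cannot resolve spectral structure at scale $r^2|\ln r|$; what you call ``the main obstacle'' is not a technical hurdle but the sign that the route fails. With the correct premise no fine resolution is needed: qualitative norm-resolvent convergence (Theorem~\ref{thm_convergence_crystal}\,(i) combined with Weidmann's spectral-projection argument, as in Proposition~\ref{proposition_existance_spectral_gap_H}) already yields the $r$-independent gap $(\lm_n-\eta-a,\lm_n+\eta+a)\subset\rho(\H_{r,\lm_n})$. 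Relatedly, your dismissal of the operator-norm perturbation in the self-consistency step (``one cannot perturb in operator norm, because $\|w_r-1\|_{L^\infty}\sim(r^2|\ln r|)^{-1}$'') is mistaken, and it hides the very mechanism producing the width $\lm_1 r^2|\ln r|$ in the statement: the spectral theorem gives $\|(\H_{r,\lm_n}-\nu)f\|_{L^2(\dR^2)}\ge\eta\|f\|_{L^2(\dR^2)}$ for $\nu\in(\lm_n-a,\lm_n+a)$, while for $|\zeta|<\lm_1 r^2|\ln r|$ one has $|\zeta|\,\|w_r-1\|_{L^\infty}<\lm_1 r^2|\ln r|\cdot\frac{\eta}{\lm_1}\frac{1}{r^2|\ln r|}=\eta$, so the perturbation $\zeta(w_r-1)$ is exactly matched against the $O(1)$ gap margin and $\lm_n+\zeta\in\rho(\H_{r,\lm_n+\zeta})$ follows. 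This elementary estimate closes the self-consistency; no Feynman--Hellmann tracking of eigenvalue branches (whose uniformity over all branches near $\lm_n$ you would in any case have to justify) is needed.
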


Concerning the analysis of $\Ups_r$, there are several works 
on similar divergence type operators with high contrast coefficients,
see \eg \cite{hempel_lienau_2000, zhikov_2005}.
They have in common that the parameter becomes large on a domain whose diameter divided by the size of the period cell
is constant and thus, these results do not apply in our setting. 
Other closely related results on the spectral analysis of divergence
type operators with high contrast coefficients 
can be found \eg in~\cite{cherednichenko_cooper_2016, figotin_kuchment_1996_1, khrabustovskyi_2013, zhikov_2000}.

In our setting we use the Floquet-Bloch decomposition to show that any compact subinterval of $[0,\infty)$
is contained in the spectrum of~$\Ups_r$ for sufficiently small $r > 0$. 
This is the second main result of the paper; its proof is provided in Section~\ref{section_div}.
%
\begin{thm} \label{theorem_divergence}
  For any $L > 0$ there exists $r_0 = r_0(L) > 0$ 
  such that $[0, L] \subset \s(\Ups_r)$ for all $0 < r < r_0$.
\end{thm}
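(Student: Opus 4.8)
The plan is to exploit the $\Lambda$-periodicity of $\Ups_r$ through a Floquet--Bloch decomposition and to compare the resulting band functions with those of the free Laplacian $-\Delta$, whose spectrum is $[0,\infty)$. First I would apply the Gelfand transform to write $\Ups_r\cong\int_{\wh\Gamma^*}^\oplus\Ups_r(\theta)\,\dd\theta$, where $\Lambda^*$ is the dual lattice, $\wh\Gamma^*$ a (compact, connected) fundamental domain, and the fibre $\Ups_r(\theta)$ acts on $L^2(\wh\Gamma)$ via the quadratic form $f\mapsto\int_{\wh\Gamma}w_r^{-1}|(\nabla+\ii\theta)f|^2\,\dd x$ on $\Lambda$-periodic $H^1$-functions. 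Since $\wh\Gamma$ is bounded, each $\Ups_r(\theta)$ has compact resolvent, so its spectrum consists of eigenvalues $E_1(r,\theta)\le E_2(r,\theta)\le\cdots$ depending continuously on $\theta$ (indeed Lipschitz, with a $\theta$-Lipschitz constant uniform in $r$ because $w_r^{-1}\le1$); consequently $\sigma(\Ups_r)=\overline{\bigcup_{j\ge1}E_j(r,\wh\Gamma^*)}$, each range a compact interval. The free operator decomposes the same way, its fibre $(-\Delta)(\theta)$ having eigenvalues $|b+\theta|^2$, $b\in\Lambda^*$, with eigenfunctions $e^{\ii b\cdot x}$.

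Because $w_r^{-1}\le1$ pointwise, the form monotonicity $\int w_r^{-1}|(\nabla+\ii\theta)f|^2\le\int|(\nabla+\ii\theta)f|^2$ and the min--max principle give the one-sided bound $E_j(r,\theta)\le E_j^0(\theta)$, where $E_j^0(\theta)$ are the sorted eigenvalues of $(-\Delta)(\theta)$. The heart of the proof is the matching lower bound $\liminf_{r\to0+}E_j(r,\theta)\ge E_j^0(\theta)$, i.e.\ the fibrewise eigenvalue convergence $E_j(r,\theta)\to E_j^0(\theta)$. I would obtain this from Mosco convergence of the forms $a_r^\theta\to a_0^\theta$: the recovery inequality is immediate since $a_r^\theta[u]\to a_0^\theta[u]$ for fixed $u$ by dominated convergence (as $w_r^{-1}\to1$ a.e.\ and $w_r^{-1}\le1$), while the $\liminf$-inequality follows because on any compact set avoiding $Y$ the weight equals $1$ for small $r$, so lower semicontinuity of the Dirichlet energy applies, points being removable for $H^1(\dR^2)$.

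The delicate point, and the main obstacle, is that norm resolvent convergence of the fibres genuinely fails: on each rod $w_r^{-1}$ is of order $r^2|\ln r|$, so eigenfunctions may carry gradients of order $(r^2|\ln r|)^{-1/2}$ there and the naive resolvent estimate diverges. What rescues the argument is that the $L^2$-mass cannot concentrate on the shrinking rods. Quantitatively, for a normalised eigenfunction $\psi_r$ of $\Ups_r(\theta)$ with $E_j(r,\theta)\le C$ one has $\int_{\mathrm{rod}}w_r^{-1}|\nabla\psi_r|^2\le C$, hence $\int_{\mathrm{rod}}|\nabla\psi_r|^2\le C/(r^2|\ln r|)$; Poincar\'e on a set of size $r$ then gives $\|\psi_r-\overline{\psi_r}\|_{L^2(\mathrm{rod})}^2\le C/|\ln r|$, while the logarithmic-capacity estimate in the surrounding vacuum annulus (where the energy is genuinely bounded) controls the rod average by $|\overline{\psi_r}|^2\le C|\ln r|$. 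Together these yield $\int_{\mathrm{rod}}|\psi_r|^2=O(r^2|\ln r|)\to0$, so all $L^2$-mass escapes into the vacuum; the weak $H^1$-limits of the first $j$ eigenfunctions are then orthonormal eigenfunctions of $(-\Delta)(\theta)$, forcing $E_j^0(\theta)\le\liminf E_j(r,\theta)$ and completing the convergence.

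Finally I would deduce $[0,L]\subset\sigma(\Ups_r)$ by a spectral-flow count rather than by sorting bands. Fix $E\in(0,L)$ and choose $b\in\Lambda^*$ and an interior point $\theta_*$ with $|b+\theta_*|^2=E$ and $b+\theta_*\ne0$, so that along a short transversal path $\theta(s)$ the free branch value $|b+\theta(s)|^2$ crosses $E$ exactly once while all other branches stay bounded away from $E$. Let $n_r(s)$ count the eigenvalues of $\Ups_r(\theta(s))$ below $E$. If $E\notin\sigma(\Ups_r)$ then $E$ lies in a spectral gap of every fibre along the path, so $n_r(s)$ is constant; but fibrewise eigenvalue convergence at the two endpoints (where $E$ is not a free eigenvalue) gives $n_r=n_0$ there for small $r$, and the two endpoint counts differ by one, a contradiction. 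Hence $E\in\sigma(\Ups_r)$; the value $E=0$ is covered by the constant function in the fibre $\theta=0$, and since only finitely many branches with $|b|\lesssim\sqrt L$ are relevant, each crossing transversally with a margin that is uniform by compactness of $[0,L]$, one obtains a single threshold $r_0=r_0(L)$. By closedness of the spectrum this yields $[0,L]\subset\sigma(\Ups_r)$ for all $0<r<r_0$.
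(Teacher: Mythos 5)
Your proposal is correct in substance but follows a genuinely different architecture than the paper's proof, so a comparison is worthwhile. Both arguments start from the Floquet--Bloch decomposition (your quasimomentum gauge $f\mapsto\int_{\wh\Gamma}w_r^{-1}|(\nabla+\ii\theta)f|^2\,\rmd x$ on periodic functions is unitarily equivalent to the paper's semi-periodic fibres $\Ups_{r,\theta}$), both use the one-sided min--max bound $E_j(r,\theta)\le E_j^0(\theta)$ coming from $w_r^{-1}\le 1$, and both hinge on the same core analytic fact: the $L^2$-mass of low-energy functions cannot concentrate on the shrinking rods. Your Poincar\'e-plus-logarithmic-capacity estimate is essentially the content of Lemma~\ref{lemma_estimate_forms}, proved there by similar elementary radial estimates (note only that your two bounds sum to $O(|\ln r|^{-1})$, not $O(r^2|\ln r|)$ --- the Poincar\'e term dominates --- which still tends to zero, so nothing is lost). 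The routes then diverge: the paper invokes the two-dimensional band-overlap property of the free Laplacian, $b_{0,n}>(1+\beta)a_{0,n+1}$ (Proposition~\ref{proposition_overlap_free}, via \cite{dahlberg_trubowitz_1982} or \cite{K16}), and shows this overlap persists for small $r$ by bounding the perturbed band maxima from below through Neumann eigenvalues of the punctured cell $\wh\Gamma\sm\cB$, whose convergence is taken from \cite{rauch_taylor_1975}; you instead prove fibrewise eigenvalue convergence $E_j(r,\theta)\to E_j^0(\theta)$ by concentration-compactness and then run a spectral-flow count along a short path in the Brillouin zone on which exactly one free branch $|b+\theta(s)|^2$ crosses $E$ transversally. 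Your route thus replaces both external inputs (free band overlap and Rauch--Taylor) by an elementary counting argument, making the proof more self-contained; the paper's route is more quantitative, yielding persistence of a fixed overlap margin and an explicit $|\ln r|^{-1}$ rate. Two steps of yours need expansion to be airtight. First, to conclude that the limits of the eigenfunctions form an orthonormal family (equivalently, to get strong $L^2(\wh\Gamma)$ convergence) you need tightness near the points of $Y$ at \emph{all} scales, not only at scale $r$; your estimate excludes concentration on the rods themselves, and the remaining annular mass is handled, e.g., by uniformly bounded extension operators across the shrinking disks plus Rellich. Second, the uniformity of $r_0$ in $E\in[0,L]$ must be argued via finitely many windows $(E_0-\delta,E_0+\delta)$, each served by one fixed path whose endpoint fibres carry no free eigenvalue in the window, since a threshold $r_0(E)$ chosen pointwise in $E$ does not by itself yield $[0,L]\subset\s(\Ups_r)$ for a single $r_0$; your compactness remark points in the right direction but should be spelled out. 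Both repairs are standard, and your endpoint-counting step is sound because the ordering of eigenvalues reduces it to the convergence of finitely many of them.
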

We conclude this section by a discussion
and interpretation of our results.  
According to Theorem~\ref{theorem_spectrum_Theta}, 
for a given set of pairwise distinct frequencies $\omg_1, \dots, \omg_N \in (0,+\infty)$ 
there exists a geometry of the crystal
(a suitable period cell and coefficients $c_1, \dots, c_N$) such that 
\begin{equation*}
  \left\{\left(\tfrac{\omg_1}{c} \right)^2, 
  \left(\tfrac{\omg_2}{c} \right)^2,\dots, \left(\tfrac{\omg_N}{c} \right)^2 
  \right\} \subset \rho(\Th_r), 
\end{equation*}
if the diameter of the rods (related to $r > 0$) is sufficiently small.
Moreover, we have an estimate for the size of the gap around
each  $\omg_n$.
In particular,
our results demonstrate a way how to construct photonic crystals such that
\TM-modes 
with frequencies in the vicinities of $\omg_n$, $n= 1,\dots, N$, can not propagate through it. 
At the same time, in view of Theorem~\ref{theorem_divergence}
there are no gaps in compact subintervals of the spectrum of $\Ups_r$ for small $r > 0$.
Restricting the frequencies to certain ranges, as it is typically the case in applications, 
there exists an $r_0 > 0$ such that for any frequency in this range
there is a \TE-mode with this frequency which can propagate through the crystal
for any $r\in (0,r_0)$.
These results perfectly match the experimental data and numerical 
tests in~\cite{joannopoulos_2008, joannopoulos_villeneuve_97, meade_93}
performed for the special case of the square lattice and $N=1$.

\subsubsection*{Organization of the paper}
In Section~\ref{section_delta} we introduce Schr\"odinger operators with point 
interactions supported on a lattice
and collect some results about their spectra. These results are employed 
in the spectral analysis of $\Th_r$ in Section~\ref{section_weighted}. 
Next, the operator $\Ups_r$ is investigated in Section~\ref{section_div}. 
Finally, Appendix~\ref{appendix_convergence_delta}
contains the technical analysis of the convergence of $\H_{r,\lm}$ in the norm resolvent sense to a Schr\"odinger operator with point 
interactions supported on a lattice. 

\section{Schr\"odinger operators with point interactions supported on lattices} \label{section_delta}

In this preliminary section we fix some notations that are associated to
lattices of points. Furthermore, we introduce Schr\"odinger operators with point 
interactions supported on a lattice and discuss
their spectral properties. These preparations will be useful in the
spectral analysis of $\Th_r$.

Let two linearly independent vectors $\sfa_1, \sfa_2 \in \dR^2$ be given and
let the lattice $\Lambda$ and the period cell $\wh \Gamma$ be defined
by~\eqref{Lambda_and_cell}.
Next, we introduce the associated \emph{dual lattice} $\Gamma$ by 
\begin{equation} \label{def_Gamma}
	\Gamma := \big\{ n_1 \sfb_1 + n_2 \sfb_2 \in\dR^2 \colon n_1, n_2 \in \dZ \big\},
\end{equation}
where $\sfb_1,\sfb_2 \in \dR^2$ are defined via
$\sfa_m \sfb_l = 2 \pi \delta_{ml}$ for  $m, l= 1, 2$.
The \emph{Brillouin zone} $\wh\Lambda \subset \dR^2$ corresponding to the lattice $\Lambda$ is defined by
\begin{equation} \label{Brillouin}
	\wh\Lambda := 
	\left\{ s_1 \sfb_1 + s_2 \sfb_2 \in\dR^2 \colon s_1, s_2 \in 
	\left[ -\tfrac{1}{2}, \tfrac{1}{2} \right] \right\}.
\end{equation}
%
In what follows we are going to discuss Hamiltonians with point interactions supported on $\Lambda$ following the lines 
of~\cite[Sec. III.4]{albeverio_gesztesy_hoegh-krohn_holden}.
Let $-\Delta$ be the self-adjoint free Laplacian in~$L^2(\dR^2)$ with the domain
$\dom(-\Delta)  = H^2(\dR^2)$. Its resolvent is denoted by
$R_0(\nu) := (-\Delta - \nu)^{-1}$.
For $\nu \in \rho(-\Delta) = \dC \sm [0, \infty)$
the integral kernel $G_\nu$ of $R_0(\nu)$ is given by
\begin{equation} \label{def_G_lambda}
	G_\nu(\xx - \yy) 
	= 
	\frac{\ii}{4} H^{(1)}_0\big(\sqrt{\nu} |\xx-\yy|\big),
\end{equation}
where $\text{Im} \sqrt{\nu} > 0$ and $H^{(1)}_0$ is the Hankel function of 
the first kind and order zero; \cf \cite[Chap.~9]{abramowitz_stegun} for details on Hankel functions.
Next, we set
\begin{equation} \label{def_G_tilde}
  \wt{G}_{\nu}(\xx) = 
  \begin{cases} G_\nu(\xx), &\quad\xx \neq 0, \\ 0, &\quad\xx = 0. \end{cases}
\end{equation}
For $\aa \in \dR$ and $m,l\in\dZ^2$ we define
\[
	q_{\aa, \Lambda}^{ml}(\nu) 
	:= 
	\left( \aa - \frac{1}{2 \pi} 
	\left(\gamma - \ln \frac{\sqrt{\nu}}{2 \ii}\right) \right)
	\delta_{ml} - \wt{G}_{\nu}(\yy_m - \yy_l),
\]
where $\gamma = 0.5772\dots$ is the Euler-Mascheroni constant
and $\yy_p = p_1 \sfa_1 + p_2 \sfa_2$ for $p = (p_1, p_2) \in \dZ^2$.
Eventually, we introduce for $\nu\in\dC\sm\dR$ the matrix 
\begin{equation} \label{def_Gamma_alpha}
  Q_{\aa,\Lambda}(\nu) 
  := 
  \left\{q_{\aa,\Lambda}^{ml}(\nu)   \right\}_{m, l \in \dZ^2},
\end{equation}
which induces a closed operator in $\ell^2(\dZ^2)$ that admits a bounded and everywhere defined inverse,
if $\Im \sqrt{\nu}$ is sufficiently large; 
\cf \cite[Thm. III.4.1]{albeverio_gesztesy_hoegh-krohn_holden}. 
We denote this operator again by $Q_{\aa,\Lambda}(\nu)$
as no confusion will arise. The matrix elements of
the inverse $Q_{\aa,\Lambda}(\nu)^{-1}$ 
in $\ell^2(\dZ^2)$ are denoted
by $r^{ml}_{\aa,\Lambda}(\nu)$.
\begin{dfn}\label{def:Op_PI}
	 {\rm The Schr\"odinger operator $-\Delta_{\aa, \Lambda}$ 
	 with point interactions}
	 supported on $\Lambda$
	  with coupling constant $\aa \in \dR$ is defined
	 as the self-adjoint operator 
	 in $L^2(\dR^2)$ with the resolvent 
	 \begin{equation} \label{def_delta_op_const}
		R_{\aa,\Lambda}(\nu) := (-\Delta_{\aa,\Lambda}  - \nu)^{-1} 
	  	= 
		R_0(\nu) 
	  	+
	  \sum_{m,l\in\dZ^2} 
	  r^{ml}_{\aa,\Lambda}(\nu)\,
	  \big(\,\cdot\, ,\, \ov{G_{\nu}(\cdot - \yy_l)}\,\big)_{L^2(\dR^2)} G_\nu(\cdot - \yy_m),
	\end{equation}
	where $\nu \in \dC \sm \dR$ and
	$\yy_p = p_1 \sfa_1 + p_2 \sfa_2$ for $p = (p_1,p_2)\in\dZ^2$.
\end{dfn}
Next, we are going to investigate the spectrum of $-\Delta_{\aa, \Lambda}$.
For this purpose, we introduce for $\alpha \in \mathbb{R}$ the numbers $E_j = E_j(\alpha, \Lambda)$,
$j \in \{ 0, 1, 2 \}$, as follows: $E_0$ is the smallest zero of the function\footnote{Eq.~\eqref{equation_E_0}
differs from the condition in \cite[Eq.~(4.42) in Sec.~III.4]{albeverio_gesztesy_hoegh-krohn_holden}, as the term $\frac{1}{2 \pi} \ln 2$
was forgotten there
(it disappeared in the convergence analysis in \cite[Eq.~(4.29) in Sec.~III.4]{albeverio_gesztesy_hoegh-krohn_holden}).}
\begin{equation} \label{equation_E_0}
  E\mapsto g(E, \bfo) + \frac{1}{2 \pi}(\gamma + \ln 2) - \alpha,
\end{equation}
where $g(E, \theta)$ is defined for 
$\tt \in \wh\Lambda$ and $E \notin \{ |\xx + \tt|^2\colon \xx \in \Gamma \}$
by%
\begin{equation*}
	g(E, \tt) 
	:= 
	\frac{1}{4 \pi^2} \lim_{R \arr \infty} 
	\Bigg[ \sum_{\xx \in \Gamma\colon |x + \tt| \leq R} 
	\frac{|\wh\Lambda|}{|x + \tt|^2 - E} - 2 \pi \ln R \Bigg].
\end{equation*}
Similarly, the number $E_1$ is given by the smallest 
zero of the function
\begin{equation} \label{equation_E_1}
  E\mapsto g(E, \theta_0) + \frac{1}{2 \pi}(\gamma + \ln 2) - \alpha,
\end{equation}
where $\theta_0:=-\frac{1}{2} (\sfb_1 + \sfb_2)$. Eventually, 
let $\sfb_- \in \{\sfb_1, \sfb_2\}$ be a vector satisfying $|\sfb_-| = \min\{ |\sfb_1|, |\sfb_2|\}$.
Then, we set $E_2 := \min\big\{ \wt{E}, \frac{1}{4}|\sfb_-|^2 \big\}$, where $\wt{E}$
is the smallest positive solution of equation \eqref{equation_E_0}\footnote{Note 
that $\wt{E}$ is equal to $E^{\alpha, \Lambda}_{\sfb_-}(\bfo)$ in the notation 
of \cite[Sec.~III.4]{albeverio_gesztesy_hoegh-krohn_holden}. 
The fact that $E^{\alpha, \Lambda}_{\sfb_-}(\bfo)$
is the smallest positive solution of equation \eqref{equation_E_0} can be shown in the same way as in the proof of
\cite[Thm.~III.1.4.4]{albeverio_gesztesy_hoegh-krohn_holden}. Observe that \eqref{equation_E_1} is modified similarly 
as \eqref{equation_E_0} compared to \cite{albeverio_gesztesy_hoegh-krohn_holden}.}.
All the numbers $E_0, E_1$, and $E_2$ are well defined; \cf 
\cite[Sec. III.4]{albeverio_gesztesy_hoegh-krohn_holden}.
In the next proposition we summarize some fundamental spectral properties of $-\Delta_{\aa, \Lambda}$ 
that can be found \eg in~\cite[Thm. III.4.7]{albeverio_gesztesy_hoegh-krohn_holden}.
%
\begin{prop} \label{thm_spectrum_delta_op}
  Let $\aa \in \dR$ and $\Lambda$ 
  be as in~\eqref{Lambda_and_cell}.
  Let the Schr\"odinger operator $-\Delta_{\aa,\Lambda}$ 
  be as in Definition~\ref{def:Op_PI} and
  let $\tt_0$, $g(\cdot, \cdot)$
  and $E_j = E_j(\aa,\Lambda)$, $j=0,1,2$, be as above.
  Then the following claims hold.
  %
  \begin{myenum}
  \item $\s(-\Delta_{\aa, \Lambda}) 
	    = [E_0, E_1] \cup [E_2, \infty)$.
   \item $E_0 < 0$ and $E_2 > 0$ for all $\aa \in \dR$.
 	\item $E_1 < 0$ 
    \iff $\aa < g(\bfo, \tt_0) + \frac{1}{2 \pi}(\gamma + \ln 2)$.\footnote{This condition
    differs from eq. (4.51) in \cite[Thm. III.4.7]{albeverio_gesztesy_hoegh-krohn_holden}, 
    the term $\frac{1}{2 \pi}(\gamma + \ln 2)$
    was forgotten; but it must be there; \cf~\cite[Eq.~(4.29) and (4.42) in Sec. III.4]{albeverio_gesztesy_hoegh-krohn_holden}.}
 	\item There exists an $\aa_1 = \aa_1(\Lambda) \in \dR$ 
	  such that $E_2 \le E_1$  for any  $\aa \ge \aa_1$. 
	  In particular,   $\s(-\Delta_{\aa, \Lambda}) 
 		 =[ E_0, +\infty )$ holds for all $\aa \ge \aa_1$.
  \end{myenum}
\end{prop}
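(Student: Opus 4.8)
The plan is to pass to the Floquet--Bloch decomposition of $-\Delta_{\aa,\Lambda}$ and then read off the four claims from the behaviour of the lattice function $g(\cdot,\cdot)$, invoking \cite[Thm.~III.4.7]{albeverio_gesztesy_hoegh-krohn_holden} for the parts genuinely established there (with the constant $\tfrac{1}{2\pi}\ln 2$ restored, as the footnotes explain). Since the operator defined by the resolvent~\eqref{def_delta_op_const} commutes with the $\Lambda$-translations, it is unitarily equivalent to a direct integral $\int_{\wh\Lambda}^{\oplus}(-\Delta_{\aa,\Lambda})_\tt\,\dd\tt$ over the Brillouin zone, each fibre being a one-centre point interaction on the torus $\dR^2/\Lambda$ with quasimomentum $\tt$. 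The fibre $(-\Delta_{\aa,\Lambda})_\tt$ is a rank-one (resolvent) perturbation of the free fibre, whose eigenvalues are the free Floquet eigenvalues $\{|\xx+\tt|^2:\xx\in\Gamma\}$ together with the zeros of $E\mapsto g(E,\tt)+\tfrac1{2\pi}(\gamma+\ln 2)-\aa$, the two families interlacing. Hence $\s(-\Delta_{\aa,\Lambda})=\overline{\bigcup_{\tt\in\wh\Lambda}\s\big((-\Delta_{\aa,\Lambda})_\tt\big)}$, and claim~(i) is precisely the identification of this union in \cite[Thm.~III.4.7]{albeverio_gesztesy_hoegh-krohn_holden}: the lowest branch sweeps out the band $[E_0,E_1]$, with minimum at $\tt=\bfo$ and maximum at the corner $\tt_0=-\tfrac12(\sfb_1+\sfb_2)$, while everything above it fills the half-line $[E_2,\infty)$.

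The analytic engine for (ii)--(iv) is the strict monotonicity of $E\mapsto g(E,\tt)$ on each interval between two consecutive values of $\{|\xx+\tt|^2:\xx\in\Gamma\}$: differentiating the regularised series gives $\partial_E g(E,\tt)=\tfrac1{4\pi^2}\sum_{\xx\in\Gamma}\tfrac{|\wh\Lambda|}{(|\xx+\tt|^2-E)^2}>0$, and $g(\cdot,\tt)$ runs from $-\infty$ to $+\infty$ across each such interval. For~(ii), at $\tt=\bfo$ the smallest Floquet eigenvalue is $0$ (attained by $\xx=0$), so on $(-\infty,0)$ the function $g(\cdot,\bfo)$ is an increasing bijection onto $\dR$ — the $\xx=0$ term $|\wh\Lambda|/(-E)$ forces $g(E,\bfo)\to+\infty$ as $E\to0^-$ — whence the smallest zero $E_0$ of~\eqref{equation_E_0} is negative. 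That $E_2>0$ is immediate from its definition, since $\wt E>0$ and $\tfrac14|\sfb_-|^2>0$.

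For~(iii), because $\tt_0\notin\Gamma$ the smallest Floquet eigenvalue at $\tt_0$, namely $E_{\min}(\tt_0):=\min_{\xx\in\Gamma}|\xx+\tt_0|^2$, is strictly positive, so $g(\cdot,\tt_0)$ is an increasing bijection of $(-\infty,E_{\min}(\tt_0))$ onto $\dR$ and the smallest zero $E_1$ of~\eqref{equation_E_1} lies in this first interval. Therefore $E_1<0$ exactly when the strictly increasing function $E\mapsto g(E,\tt_0)+\tfrac1{2\pi}(\gamma+\ln2)-\aa$ is still positive at $E=0$, i.e. exactly when $\aa<g(\bfo,\tt_0)+\tfrac1{2\pi}(\gamma+\ln2)$ (reading $g(\bfo,\tt_0)$ as the value at $E=0$), which is the stated criterion.

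For~(iv) I would track the $\aa$-dependence of the two relevant edges. Both $E_1(\aa)$ and $\wt E(\aa)$ solve $g(\cdot,\cdot)=\aa-\tfrac1{2\pi}(\gamma+\ln2)$ and so are strictly increasing in $\aa$; as $\aa\to+\infty$ they climb to the tops of their first Floquet intervals, giving $E_1(\aa)\nearrow E_{\min}(\tt_0)$ while $\wt E(\aa)\nearrow\min_{\xx\in\Gamma\sm\{0\}}|\xx|^2$. Since $E_2(\aa)\le\tfrac14|\sfb_-|^2$ is bounded independently of $\aa$, it then suffices to verify $\tfrac14|\sfb_-|^2<E_{\min}(\tt_0)$: for a reduced basis ($|\sfb_1|\le|\sfb_2|\le|\sfb_1\pm\sfb_2|$) one has $E_{\min}(\tt_0)=\tfrac14\min(|\sfb_1+\sfb_2|^2,|\sfb_1-\sfb_2|^2)\ge\tfrac14|\sfb_-|^2$, so that for large $\aa$ one lands in $E_1(\aa)>\tfrac14|\sfb_-|^2\ge E_2(\aa)$, the gap $(E_1,E_2)$ is empty, and $\s(-\Delta_{\aa,\Lambda})=[E_0,\infty)$. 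I expect this last step to be the main obstacle: it is the only place where band edges at two different quasimomenta must be compared, and pinning down the inequality $E_2\le E_1$ in full generality — including the borderline symmetric lattices where the two limits coincide and the location of the band maximum at $\tt_0$ must itself be justified — requires the finer analysis of the branches $E^{\aa,\Lambda}_{\cdot}(\tt)$ in \cite[Sec.~III.4]{albeverio_gesztesy_hoegh-krohn_holden} rather than the soft monotonicity arguments that settle (ii) and (iii).
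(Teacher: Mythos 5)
The first thing to say is that the paper offers no proof to compare against: Proposition~\ref{thm_spectrum_delta_op} is introduced with ``can be found e.g.\ in \cite[Thm.~III.4.7]{albeverio_gesztesy_hoegh-krohn_holden}'', and the paper's only original content here is the pair of footnotes restoring the constant $\tfrac{1}{2\pi}\ln 2$. So your reconstruction has to be measured against the monograph's proof, whose skeleton you reproduce faithfully: Floquet--Bloch fibration, fibre spectra given by the zeros of $E\mapsto g(E,\tt)+\tfrac{1}{2\pi}(\gamma+\ln 2)-\aa$ interlaced with the free Floquet levels, and strict monotonicity of $g(\cdot,\tt)$ between consecutive levels. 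On that basis your arguments for (ii) and (iii) are correct and self-contained (the $\xx=\bfo$ term forces $g(E,\bfo)\to+\infty$ as $E\to 0^-$, and $E_{\min}(\tt_0):=\min_{\xx\in\Gamma}|\xx+\tt_0|^2>0$ legitimises evaluating at $E=0$), which is strictly more than the paper provides; for (i) you cite the monograph, exactly as the paper does.

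Part (iv) is where the genuine gap sits, and your own diagnosis is accurate --- indeed the obstruction is slightly worse than you suggest. Your chain $E_1(\aa)>\tfrac14|\sfb_-|^2\ge E_2(\aa)$ requires the \emph{strict} inequality $E_{\min}(\tt_0)>\tfrac14|\sfb_-|^2$, and even for reduced bases this degenerates to equality: for the triangular lattice (equal-length basis vectors at angle $2\pi/3$) one has $|\sfb_1+\sfb_2|=|\sfb_-|$, hence $E_{\min}(\tt_0)=\tfrac14|\sfb_-|^2$, while $E_1(\aa)<E_{\min}(\tt_0)$ for every finite $\aa$, so the limiting argument never closes the gap; one is forced to compare $E_1(\aa)$ with $\wt E(\aa)$ and with the eigenvalues persisting at degenerate free Floquet levels, i.e.\ precisely the branch analysis of \cite[Sec.~III.4]{albeverio_gesztesy_hoegh-krohn_holden}. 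Moreover, the companion input you invoke in (i) --- that the first band attains its maximum at $\tt_0$ --- is itself lattice-sensitive rather than soft: for $E<0$ the $\tt$-dependent part of $g(E,\cdot)$ is a positive combination of cosines, $\sum_{\lambda\in\Lambda\sm\{\bfo\}}K_0\big(\sqrt{-E}\,|\lambda|\big)\cos(\tt\cdot\lambda)$ up to a positive factor, and for the triangular lattice its minimum sits at $\pm\tfrac13(\sfb_1+\sfb_2)$ (the K points, the same points responsible for the phenomena studied in \cite{L16}) rather than at $\tt_0$; the monograph settles this step by theta-function monotonicity, which is exactly where the geometry of the lattice enters. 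So your division of labour --- elementary proofs of (ii)--(iii), citation for (i) and for the completion of (iv) --- is legitimate and in fact more informative than the paper's wholesale citation, but those two cited steps should be flagged explicitly as imported; note also that for the only use the paper makes of this proposition (Proposition~\ref{thm_existance_spectral_gap_delta_op} needs just \emph{one} suitable lattice), one may fix a square lattice, for which all of these delicate points are unproblematic.
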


By Proposition~\ref{thm_spectrum_delta_op} the operator $-\Delta_{\aa, \Lambda}$ has a gap in its spectrum, if the 
interaction strength is chosen in a proper way. In the rest of this section, we are going to investigate this gap in 
more detail. In particular, we will show that for a given compact interval $[a, b] \subset \dR$ there 
exist a lattice $\Lambda$ and an interaction strength $\aa$ such that $[a, b]$ is contained in the spectral gap of 
$-\Delta_{\aa, \Lambda}$. 
To this aim we introduce for $k > 0$ the unitary scaling operator
\begin{equation*} 
	U_k\colon L^2(\dR^2) \arr L^2(\dR^2), 
	\qquad 
	(U_k f)(\xx) := k^{-1} f \big( k^{-1} \xx \big).
\end{equation*}
Its inverse $U_k^{-1}\colon L^2(\dR^2) \arr L^2(\dR^2)$ clearly acts as 
$(U_k^{-1} f)(\xx) = k f \left( k \xx \right)$.
In the next proposition we show that this rescaling yields, up to multiplication with a constant, a unitary equivalence 
between point interaction operators with suitably modified
geometries of lattices
and  strengths of interactions.
\begin{prop} \label{prop_unitary_equiv}
	Let $\aa \in \dR$ and $\Lambda$ 
	be as in~\eqref{Lambda_and_cell}.
	For $k > 0$ set $\Lambda_k := k^{-1}\Lambda$
	and $\aa_k := \aa - \frac{\ln k}{2 \pi}$. 
  Let the Schr\"odinger operators $-\Delta_{\aa,\Lambda}$ 
  and $-\Delta_{\aa_k,\Lambda_k}$ 
  be as in Definition~\ref{def:Op_PI}.
	Then it holds
	\begin{equation*}
		U_k^{-1} \left(-\Delta_{\aa, \Lambda}\right) U_k 
		= k^{-2}\left(-\Delta_{\aa_k, \Lambda_k}\right).
	\end{equation*}
\end{prop}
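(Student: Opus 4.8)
The plan is to compare resolvents. Since $-\Delta_{\aa,\Lambda}$ and $-\Delta_{\aa_k,\Lambda_k}$ are self-adjoint and are \emph{defined} through their resolvents in Definition~\ref{def:Op_PI}, and since conjugation by the unitary $U_k$ (as well as multiplication by $k^{-2}$) preserves self-adjointness, it suffices to verify the claimed operator identity at the level of resolvents at a single non-real spectral parameter. Concretely, the asserted equality $U_k^{-1}(-\Delta_{\aa,\Lambda})U_k = k^{-2}(-\Delta_{\aa_k,\Lambda_k})$ is equivalent to
\[
  U_k^{-1} R_{\aa,\Lambda}(\nu)\,U_k = k^2\, R_{\aa_k,\Lambda_k}(k^2\nu),
\]
and I would fix $\nu\in\dC\sm\dR$ with $\Im\sqrt{\nu}$ so large that both $Q_{\aa,\Lambda}(\nu)$ and $Q_{\aa_k,\Lambda_k}(k^2\nu)$ are boundedly invertible in $\ell^2(\dZ^2)$ (recall $\Im\sqrt{k^2\nu}=k\,\Im\sqrt{\nu}$), as guaranteed by \cite[Thm.~III.4.1]{albeverio_gesztesy_hoegh-krohn_holden}.

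First I would treat the free part. From $(U_k f)(\xx)=k^{-1}f(k^{-1}\xx)$ one gets directly that $U_k$ maps $H^2(\dR^2)$ onto itself and $U_k^{-1}(-\Delta)U_k = k^{-2}(-\Delta)$, whence $U_k^{-1}R_0(\nu)U_k = k^2 R_0(k^2\nu)$. Next I would record the scaling law of the Green's function: from the explicit form~\eqref{def_G_lambda} and $\sqrt{k^2\nu}=k\sqrt{\nu}$ one has $G_\nu(k\zz)=G_{k^2\nu}(\zz)$. Writing $\yy_m' := k^{-1}\yy_m$ for the points of $\Lambda_k$, this yields $U_k^{-1}[G_\nu(\cdot-\yy_m)] = k\,G_{k^2\nu}(\cdot-\yy_m')$ and, after the substitution $\xx=k\,\zz$, the pairing identity $(U_k f,\ov{G_\nu(\cdot-\yy_l)})_{L^2(\dR^2)} = k\,(f,\ov{G_{k^2\nu}(\cdot-\yy_l')})_{L^2(\dR^2)}$. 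Combining the two shows that each rank term in~\eqref{def_delta_op_const} transforms, up to an overall factor $k^2$, into the corresponding rank term built from $\Lambda_k$ at energy $k^2\nu$.

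The crux is then the matrix identity $Q_{\aa,\Lambda}(\nu) = Q_{\aa_k,\Lambda_k}(k^2\nu)$. For the off-diagonal entries I would use $\yy_m'-\yy_l'=k^{-1}(\yy_m-\yy_l)$ together with $G_{k^2\nu}(k^{-1}\zz)=G_\nu(\zz)$, so that $\wt{G}_{k^2\nu}(\yy_m'-\yy_l')=\wt{G}_\nu(\yy_m-\yy_l)$ (both sides vanishing when $m=l$); the diagonal entries match precisely because $\ln\frac{\sqrt{k^2\nu}}{2\ii}=\ln k+\ln\frac{\sqrt{\nu}}{2\ii}$ and $\aa_k=\aa-\frac{\ln k}{2\pi}$, which exactly absorbs the $\ln k$ shift. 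Hence the two matrices, and therefore their inverses, coincide entrywise, i.e.\ $r^{ml}_{\aa,\Lambda}(\nu)=r^{ml}_{\aa_k,\Lambda_k}(k^2\nu)$. Substituting this together with the transformation rules above into~\eqref{def_delta_op_const} gives the desired resolvent identity and completes the proof. I expect the main obstacle to be purely the bookkeeping in this last step: keeping the branches of $\sqrt{\cdot}$ and the logarithm consistent (so that $\sqrt{k^2\nu}=k\sqrt{\nu}$ with $\Im\sqrt{\nu}>0$ and $k>0$) when extracting the $\frac{1}{2\pi}\ln k$ shift, and justifying the termwise manipulation of the operator-norm-convergent series in~\eqref{def_delta_op_const}.
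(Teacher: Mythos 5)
Your proposal is correct and follows essentially the same route as the paper's own proof: reduce the claim to the resolvent identity $U_k^{-1}R_{\aa,\Lambda}(\nu)U_k = k^2 R_{\aa_k,\Lambda_k}(k^2\nu)$, establish the free-resolvent scaling $U_k^{-1}R_0(\nu)U_k = k^2R_0(k^2\nu)$, the Green's function identities $U_k^{-1}G_\nu(\cdot-\yy) = kG_{k^2\nu}(\cdot-k^{-1}\yy)$, and the matrix identity $q^{ml}_{\aa,\Lambda}(\nu) = q^{ml}_{\aa_k,\Lambda_k}(k^2\nu)$ (where $\aa_k$ absorbs the $\frac{\ln k}{2\pi}$ shift), then substitute into the resolvent formula of Definition~\ref{def:Op_PI}. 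Your additional care about choosing $\Im\sqrt{\nu}$ large enough for invertibility of both $Q$-matrices and about why a single-parameter resolvent identity suffices for equality of the self-adjoint operators is sound and slightly more explicit than the paper, but it is the same argument.
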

\begin{proof}
	Let $\nu \in \dC \sm \dR$. 
	We show 
	\begin{equation*}
    	U_k^{-1}R_{\aa, \Lambda}(\nu) U_k 
    	= 
    	k^2 R_{\aa_k, \Lambda_k}(k^2\nu),
	\end{equation*}
	which yields then the claim. 
	By~\eqref{def_delta_op_const} it holds
	\begin{equation*}
    \begin{split}
    	U_k^{-1}R_{\aa,\Lambda}(\nu) U_k  
    	= 
		U_k^{-1}R_0(\nu) U_k  
	   	+ 
	   	\sum_{m,l \in \dZ^2} r^{ml}_{\aa,\Lambda}(\nu)\,
	   \big(\,\cdot\,,\, U_k^{-1} \ov{G_{\nu}(\cdot -  \yy_l)}\, \big)_{L^2(\dR^2)} 
            U_k^{-1} G_\nu(\cdot -  \yy_m).
    \end{split}
	\end{equation*}
	Since $U_k^{-1} (-\Delta - \nu) U_k = k^{-2}(-\Delta - k^2 \nu)$, we get
	\begin{equation}\label{eq:unitary1}
    	U_k^{-1} R_0(\nu) U_k = k^2 R_0(k^2\nu).
	\end{equation}
	Using the definition of $U_k^{-1}$ we obtain  
	for any $\yy \in \Lambda$ the relation
	\begin{equation}\label{eq:unitary2}
	U_k^{-1} G_{\nu}(\cdot -  \yy) = k G_{k^2 \nu} (\cdot - k^{-1} \yy )
        \end{equation}	 
	almost everywhere in $\dR^2$. This implies 
	\begin{equation*}
	\begin{split}
  		\big(\,\cdot\,,\, U_k^{-1} \ov{G_{\nu}(\cdot -  \yy)}\, \big)_{L^2(\dR^2)}
		&=
		\big(\,\cdot\,,\, k\ov{G_{k^2 \nu}(\cdot -  k^{-1} \yy)}\,\big)_{L^2(\dR^2)}.
    \end{split}
  \end{equation*}
  Eventually, a straightforward calculation yields
  \begin{equation}\label{eq:unitary3}
  \begin{split}
      q_{\aa, \Lambda}^{ml}(\nu) &
      = 
      \left(\aa - \frac{1}{2 \pi} 
      \left(\gamma - \ln \frac{\sqrt{\nu}}{2\ii}\right) \right)
      \delta_{ml} - \wt{G}_{\nu}(\yy_m - \yy_l) \\
      &=  
      \left( \aa_k - \frac{1}{2 \pi} 
      \left(\gamma - \ln \frac{k \sqrt{\nu}}{2\ii}\right)
      \right) \delta_{ml} 
      - 
      \wt{G}_{k^2 \nu}(k^{-1}( \yy_m - \yy_l))  
      = q_{\aa_k, \Lambda_k}^{ml}(k^2 \nu).
    \end{split}
  \end{equation}
  %
  Hence, the identity
  $r^{ml}_{\aa,\Lambda}(\nu) = r^{ml}_{\aa_k,\Lambda_k}(k^2\nu)$
  follows. Finally, employing~\eqref{eq:unitary1},~\eqref{eq:unitary2} and~\eqref{eq:unitary3}
  we get
  \[
    \begin{split}
      U_k^{-1}R_{\aa, \Lambda}(\nu) U_k 
      &
      = 
      U_k^{-1}R_0(\nu) U_k + \sum_{m, l \in \dZ^2} 
      r^{ml}_{\aa,\Lambda}(\nu)\,
      \big(\,\cdot\,,\,U_k^{-1} \ov{G_{\nu}(\cdot -  \yy_l)}\, \big)_{L^2(\dR^2)} U_k^{-1}
      G_{\nu}(\cdot -  \yy_m) \\
      &= k^2 R_0(k^2 \nu)
      + k^2\sum_{m, l \in \dZ^2} 
      r^{ml}_{\aa_k,\Lambda_k}(k^2\nu)\,
      \big(\,\cdot\,,\, \ov{G_{k^2 \nu}(\cdot -  k^{-1}\yy_l)}\, \big)_{L^2(\dR^2)}
      G_{k^2 \nu}(\cdot -  k^{-1}\yy_m)\\
      &= k^2R_{\aa_k, \Lambda_k}(k^2\nu).    \qedhere
    \end{split}
  \]
\end{proof}

The following useful statement
follows immediately from Propositions~\ref{thm_spectrum_delta_op} 
and~\ref{prop_unitary_equiv}.
\begin{prop} \label{thm_existance_spectral_gap_delta_op}
	Let $a,b\in\dR$ with $a < b$ be given.
	Then there exists a lattice 
	$\Lambda$ and a coupling $\aa \in \dR$ such that 
	the interval $[a, b]$ belongs to a gap of the spectrum 
	of the Schr\"odinger operator 
	$-\Delta_{\aa,\Lambda}$ in Definition~\ref{def:Op_PI}, i.e.
	\[	 
		[a, b] \subset \rho(-\Delta_{\aa, \Lambda}).
	\]
\end{prop}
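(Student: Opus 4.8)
The plan is to deduce the statement directly from the explicit spectral description in Proposition~\ref{thm_spectrum_delta_op} together with the scaling identity in Proposition~\ref{prop_unitary_equiv}. The underlying idea is that Proposition~\ref{thm_spectrum_delta_op} furnishes, for a suitable coupling, a fixed spectral gap $(E_1, E_2)$ straddling the origin, while Proposition~\ref{prop_unitary_equiv} allows us to dilate this gap into an arbitrarily wide interval around $0$ by compressing the lattice. Any prescribed compact interval $[a,b]$ will then fit inside once the dilation factor is taken large enough.

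First I would fix an arbitrary lattice $\Lambda$ as in~\eqref{Lambda_and_cell} and choose the coupling $\aa \in \dR$ strictly below the threshold $g(\bfo, \tt_0) + \frac{1}{2 \pi}(\gamma + \ln 2)$. By Proposition~\ref{thm_spectrum_delta_op}(iii) this forces $E_1 = E_1(\aa, \Lambda) < 0$, while parts (i)--(ii) give $E_0 < 0$ and $E_2 = E_2(\aa, \Lambda) > 0$ together with
\[
  \s(-\Delta_{\aa, \Lambda}) = [E_0, E_1] \cup [E_2, \infty).
\]
In particular $E_1 < 0 < E_2$, so $(E_1, E_2)$ is a nonempty open gap of $\s(-\Delta_{\aa, \Lambda})$ containing the origin in its interior.

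Next I would invoke the dilation. For $k > 0$ set $\Lambda_k := k^{-1}\Lambda$ and $\aa_k := \aa - \frac{\ln k}{2 \pi}$ as in Proposition~\ref{prop_unitary_equiv}. The unitary equivalence $U_k^{-1}(-\Delta_{\aa,\Lambda})U_k = k^{-2}(-\Delta_{\aa_k,\Lambda_k})$ implies
\[
  \s(-\Delta_{\aa_k, \Lambda_k}) = k^2\, \s(-\Delta_{\aa, \Lambda}),
\]
so that $(k^2 E_1, k^2 E_2)$ is a spectral gap of $-\Delta_{\aa_k, \Lambda_k}$. Because $E_1 < 0 < E_2$, we have $k^2 E_1 \to -\infty$ and $k^2 E_2 \to +\infty$ as $k \to \infty$; hence there is a $k$ large enough that $k^2 E_1 < a$ and $k^2 E_2 > b$ hold simultaneously. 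For such $k$ one obtains
\[
  [a, b] \subset (k^2 E_1, k^2 E_2) \subset \rho(-\Delta_{\aa_k, \Lambda_k}),
\]
and relabelling $\Lambda_k$ as $\Lambda$ and $\aa_k$ as $\aa$ yields the asserted lattice and coupling.

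Since both ingredients are already in place, I do not expect a genuine obstacle. The only points requiring care are verifying that the chosen coupling places $0$ strictly \emph{inside} the gap, so that the dilation expands the gap in both directions rather than merely rescaling a gap confined to one half-axis, and keeping track of the correct direction of the scaling, namely that enlarging $k$ --- equivalently, compressing the lattice spacing --- is what widens the gap.
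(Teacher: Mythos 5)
Your proof is correct and follows essentially the same route as the paper: use Proposition~\ref{thm_spectrum_delta_op} to produce a coupling for which the spectral gap $(E_1,E_2)$ contains $0$, then apply the scaling relation of Proposition~\ref{prop_unitary_equiv} to dilate that gap until it swallows $[a,b]$. Your version even makes explicit a detail the paper leaves implicit, namely that choosing $\aa < g(\bfo,\tt_0) + \frac{1}{2\pi}(\gamma+\ln 2)$ is what forces $E_1 < 0 < E_2$ via parts (ii)--(iii), so that the dilation widens the gap in both directions.
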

\begin{proof}
	According to Proposition~\ref{thm_spectrum_delta_op}
	one can find a lattice 
	$\Lambda_0  = \big\{ n_1 \sfa_1 + n_2 \sfa_2\in\dR^2 \colon n_1, n_2 \in \dZ \big\}$ 
	and a coupling constant $\aa_0 \in \dR$ such that 
	\[
		0 \notin 
		\s(-\Delta_{\aa_0,\Lambda_0}) = [E_0, E_1] 
		\cup [E_2, \infty).
	\]
	Furthermore, by
	Proposition~\ref{prop_unitary_equiv} it holds for any $k >0$
	\begin{equation*}
		\s(-\Delta_{\aa_k,\Lambda_k}) =
		k^2\s(-\Delta_{\aa_0,\Lambda_0}) = 
		[k^2 E_0, k^2 E_1] \cup [k^2 E_2, \infty),
	\end{equation*}
	where
	$\aa_k = \aa_0 - \frac{1}{2 \pi} \ln k$ 
	and $\Lambda_k := k^{-1}\Lambda_0$.
	It remains to 
	choose the parameter $k > 0$  so large that 
	$k^2 E_1 < a < b < k^2 E_2$. Then the lattice
	$\Lambda = \Lambda_k$ and the coupling coefficient $\aa = \aa_k$
	fulfill all the requirements. 
\end{proof}
Finally, we define 
Schr\"odinger operators with point interactions
supported on a shifted lattice. For this purpose we introduce
for $\yy\in\dR^2$ the unitary translation operator
$T_\yy\colon L^2(\dR^2)\arr L^2(\dR^2)$
by $(T_\yy f)(\xx) := f(\xx-\yy)$. Then 
\begin{equation}\label{eq:OP_PI}
	-\Delta_{\aa, \yy+\Lambda} := T^{-1}_\yy(-\Delta_{\aa,\Lambda})T_\yy
\end{equation}	
is the Schr\"odinger operator with point interactions supported on $\yy + \Lambda$. Since $T_\yy$ is a unitary operator,
we have $\sigma(-\Delta_{\aa, \yy+\Lambda}) = \sigma(-\Delta_{\aa, \Lambda})$.

\section{Spectral analysis of the operator $\Theta_r$} \label{section_weighted}

This section is devoted to the proof of 
Theorem~\ref{theorem_spectrum_Theta} 
on the operator $\Th_r$ defined in~\eqref{def_Theta}. 
Since the spectrum of $\Th_r$ is still difficult to investigate, 
we consider instead the spectral problem for the auxiliary family 
of Schr\"odinger operators $\H_{r,\lm}$ in~\eqref{def_H}.
Since $w_r, w_r^{-1} \in L^\infty(\dR^2; \dR)$,
the operator $\H_{r,\lm}$ is well-defined and self-adjoint in $L^2(\dR^2)$ and it holds that 
$\lm \in \s(\Th_r)$ \iff $\lm \in \s(\H_{r,\lm})$ for all $\lm \ge 0$. 

Let the numbers $0 < \lm_1 < \lm_2 < \dots < \lm_N$ be given. 
First, we prove that $\H_{r,\lm_n}$, $n=1,\dots,N$, converges in the 
norm resolvent sense to a Schr\"odinger operator with point interactions supported on $\yy^{(n)} + \Lambda$.
In view of the spectral properties of these Hamiltonians with point interactions 
(summarized in Section~\ref{section_delta}),
it turns out that there exists a lattice $\Lambda$ and constants $c_1, \dots, c_N$ (that appear in the definition of $w_r$)
such that $\lm_n$ belongs to a gap of 
$\s(\H_{r, \lm_n})$. Finally, employing a perturbation argument, we deduce the claim of 
Theorem~\ref{theorem_spectrum_Theta}.

The following theorem treats the convergence of 
$\H_{r, \lm}$ to a Schr\"odinger operator 
with point interactions.
Since the proof of this statement is rather long and technical,
it is postponed to Appendix~\ref{appendix_convergence_delta}.
\begin{thm} \label{thm_convergence_crystal}
	Let $\H_{r,\lm}$, $\lambda \geq 0$, 
	and $-\Delta_{\aa, \yy + \Lambda}$, $\aa \in\dR$,
	$\yy\in\dR^2$, be defined as in~\eqref{def_H} and
	in~\eqref{eq:OP_PI}, respectively,
	and let $\nu \in \dC \sm \dR$.
	Then the following claims hold.
	\begin{myenum}
    \item There exists a constant $\kappa = 
    \kappa(\Lambda, \lm_1, \dots, \lm_N, \Omega, \nu) > 0 $
    such that for any $n \in \{ 1, \dots, N \}$ and all 
    sufficiently small $r > 0$
    \begin{equation*}
    	\big\| (\H_{r,\lm_n} - \nu)^{-1} - 
    	\big( -\Delta_{\aa_n, \yy^{(n)} + \Lambda} - \nu\big)^{-1} \big\| 
       \leq \kappa |\ln r|^{-1},
    \end{equation*}
    where the coefficient $\aa_n$ is given by
	\begin{equation}\label{eq:aa_C}
    	\aa_n = 
    	-c_n \frac{\lm_n |\Omega|}{4 \pi^2} + \frac{C}{2 \pi |\Omega|^2}
    	\qquad\text{with}\qquad
    	C = \int_{\Omega} \int_{\Omega} \ln|x - z| \dd x \dd z.
	  \end{equation}
    \item
    For $\lm \notin \{ \lambda_1, \dots, \lambda_N \}$ 
    there exists a constant $\kappa' = \kappa'(\Lambda, \lm, \Omega, \nu) > 0$
    such that for all sufficiently small $r > 0$
    \begin{equation*}
    	\big\| (\H_{r,\lm} - \nu)^{-1} - (-\Delta - \nu)^{-1} \big\| 
    	\leq \kappa' |\ln r|^{-1}.
    \end{equation*}
  \end{myenum}
\end{thm}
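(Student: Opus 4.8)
The plan is to obtain both claims from a single quantitative Birman--Schwinger analysis of the attractive potential $-\lm(w_r-1)$, organised so that every error is controlled by a power of $|\ln r|^{-1}$. For $|\ln r|$ large one has $w_r-1\ge 0$, since $\mu_n(1/|\ln r|)=\frac{2\pi}{\lm_n|\Omega|}\,\frac{1}{|\ln r|}+\frac{c_n}{|\ln r|^2}>0$; hence $u_r:=\sqrt{w_r-1}\ge 0$ is well defined and the potential factors as $-\lm u_r^2$. The Konno--Kuroda formula then gives, for $\nu\in\dC\setminus\dR$,
\begin{equation*}
(\H_{r,\lm}-\nu)^{-1}=R_0(\nu)+\lm\,R_0(\nu)\,u_r\big(I-K_r(\nu)\big)^{-1}u_r\,R_0(\nu),\qquad K_r(\nu):=\lm\,u_r R_0(\nu)u_r,
\end{equation*}
so the theorem reduces entirely to the asymptotics of $K_r(\nu)$ and of $(I-K_r(\nu))^{-1}$ as $r\to0+$.

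The support of $u_r$ is the disjoint union of the cells $\yy^{(m)}+\yy_k+r\Omega$ ($m=1,\dots,N$, $k\in\dZ^2$), on which $u_r=r^{-1}\sqrt{\beta_m(r)}$ with $\beta_m(r):=\mu_m(1/|\ln r|)$. I would expand $G_\nu(x)=-\frac{1}{2\pi}\ln|x|+g_0(\nu)+O(|x|^2\ln|x|)$ and compute $K_r(\nu)$ in the orthonormal system $e_{m,k}:=(r^2|\Omega|)^{-1/2}\one_{\yy^{(m)}+\yy_k+r\Omega}$ of cell averages, together with its orthogonal complement. The diagonal entry is
\begin{equation*}
(K_r(\nu)e_{m,k},e_{m,k})=\frac{\lm}{\lm_m}+\frac{1}{|\ln r|}\Big[2\pi\tfrac{\lm}{\lm_m}\big(g_0(\nu)-\tfrac{C}{2\pi|\Omega|^2}\big)+\tfrac{\lm c_m|\Omega|}{2\pi}\Big]+O(|\ln r|^{-2}),
\end{equation*}
with $C=\int_\Omega\int_\Omega\ln|x-z|\,\dd x\,\dd z$, the off-diagonal entries ($(m,k)\ne(m',k')$) equal $\lm\sqrt{\beta_m(r)\beta_{m'}(r)}\,|\Omega|\,G_\nu(\cdots)+O(|\ln r|^{-1})\sim\frac{2\pi}{|\ln r|}G_\nu(\cdots)$, and on the complement of the averages $K_r(\nu)=O(|\ln r|^{-1})$. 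The decisive point is that the leading diagonal value $\lm/\lm_m$ equals $1$ exactly when $\lm=\lm_m$ (\emph{resonance}) and stays bounded away from $1$ otherwise.

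If $\lm\notin\{\lm_1,\dots,\lm_N\}$, every diagonal value differs from $1$, so $I-K_r(\nu)$ is uniformly invertible with $\|(I-K_r(\nu))^{-1}\|=O(1)$; since $\|u_rR_0(\nu)\|=O(|\ln r|^{-1/2})$, the correction term is $O(|\ln r|^{-1})$ and claim (ii) follows. If $\lm=\lm_n$, only the species-$n$ averages $\{e_{n,k}\}_{k\in\dZ^2}$ are resonant. I would then perform a Schur/Feshbach reduction of $I-K_r(\nu)$ onto $\mathcal R:=\overline{\operatorname{span}}\{e_{n,k}\}$: the non-resonant cells and the complement of the averages are inverted boundedly and feed back only an $O(|\ln r|^{-1})$ perturbation, so that after factoring out $-\frac{2\pi}{|\ln r|}$,
\begin{equation*}
-\tfrac{|\ln r|}{2\pi}\big(I-K_r(\nu)\big)\big|_{\mathcal R}=Q_{\aa_n,\Lambda}(\nu)+O(|\ln r|^{-1}),
\end{equation*}
where the matrix $Q_{\aa_n,\Lambda}(\nu)$ of \eqref{def_Gamma_alpha} is reproduced with its $\nu$-dependent universal diagonal coming from $g_0(\nu)$ and its off-diagonal $-\wt G_\nu(\yy_k-\yy_{k'})$ coming from the inter-cell terms (the sign supplied by the conjugate kernel $\ov{G_\nu}$ in \eqref{def_delta_op_const}), while the $\nu$-independent part is exactly $\aa_n=-c_n\frac{\lm_n|\Omega|}{4\pi^2}+\frac{C}{2\pi|\Omega|^2}$ as in \eqref{eq:aa_C}. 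Inverting, the factor $\frac{|\ln r|}{2\pi}$ cancels the two $\sqrt{\beta_n(r)}\sim|\ln r|^{-1/2}$ from $u_rR_0(\nu)$, the vectors $R_0(\nu)u_re_{n,k}$ converge to $G_\nu(\cdot-\yy^{(n)}-\yy_k)$, and assembling everything recovers the point-interaction resolvent \eqref{def_delta_op_const} for $-\Delta_{\aa_n,\yy^{(n)}+\Lambda}$ up to a remainder of order $|\ln r|^{-1}$.

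The genuinely hard part is the \emph{uniform} (operator-norm, not merely strong) control over the \emph{infinite} lattice: establishing $\|u_rR_0(\nu)\|=O(|\ln r|^{-1/2})$ and the invertibility bounds for $I-K_r(\nu)$ uniformly in $r$, and---most delicately---showing that the infinite inter-cell sums converge to the regularised lattice kernel $\wt G_\nu$ entering $Q_{\aa_n,\Lambda}(\nu)$ with the stated rate, all while separating the average direction from its complement uniformly in the lattice index. Here the periodicity of $\Lambda$ (through the Floquet machinery of Section~\ref{section_delta}) and the quantitative invertibility of $Q_{\aa_n,\Lambda}(\nu)$ for $\Im\sqrt{\nu}$ large are essential; the bare convergence is already available from \cite{behrndt_holzmann_lotoreichik_2014}, so the new labour is to track each remainder to the precise order $|\ln r|^{-1}$.
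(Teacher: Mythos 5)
Your plan is structurally viable, and it is a genuinely different organization from the paper's proof. The paper (Appendix~\ref{appendix_convergence_delta}) also starts from a Birman--Schwinger--type formula, but with the \emph{asymmetric} factorization $w_r-1=v_ru_r$ (Proposition~\ref{prop:resolvent_formula_1}); it then rescales every cell to the fixed domain $\Omega$ and works on $\bigoplus_{\yy\in Y+\Lambda}L^2(\Omega)$, splitting the sandwiched operator into a cell-diagonal part $B_r(\nu)$ and an inter-cell part $D_r(\nu)C_r(\nu)$ (Lemma~\ref{lemma_ABCDE_r}, Theorem~\ref{theorem_resolvent_formula}). The $|\ln r|$-singular inversion is thereby confined to the cell-diagonal operator $(1-\lm B_r(\nu))^{-1}$, whose asymptotics are exactly the one-center result quoted from \cite[Chap.~I.5]{albeverio_gesztesy_hoegh-krohn_holden}; the product $F_r(\nu,\lm)=\lm(1-\lm B_r(\nu))^{-1}D_r(\nu)$ is then $O(1)$ and converges at rate $|\ln r|^{-1}$ (Lemma~\ref{lemma_convergence}), so the lattice coupling enters only through the \emph{regular} inversion $[1-F_r(\nu,\lm)C_r(\nu)]^{-1}$, and the identification of the limit with \eqref{def_delta_op_const} is a block-triangular computation on $r$-independent operators. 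In your scheme the resonant/non-resonant splitting, the average/non-average splitting, and the singular inversion must all be controlled simultaneously and uniformly over the infinite lattice inside one Feshbach reduction; you correctly identify this as the hard part, but it remains a roadmap, as does the passage from $\Im\sqrt\nu$ large (where invertibility of $Q_{\aa_n,\Lambda}(\nu)$ and of your Feshbach blocks is available) to arbitrary $\nu\in\dC\sm\dR$, which the paper handles in a separate step via a resolvent identity. Your leading-order matrix elements, the resonance dichotomy $\lm/\lm_m=1$, and your part (ii) argument are correct, modulo those same uniform lattice bounds.

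There is, however, a concrete sign error in your central display, and it is inconsistent with your own matrix elements. You state that the inter-cell entries of $K_r(\nu)$ are $\approx\frac{2\pi}{|\ln r|}G_\nu(\yy_k-\yy_{k'})$; consequently the operator $-\frac{|\ln r|}{2\pi}\bigl(I-K_r(\nu)\bigr)\big|_{\cR}$ has off-diagonal entries $+G_\nu(\yy_k-\yy_{k'})$, which cannot equal the off-diagonal entries $-\wt G_\nu(\yy_k-\yy_{k'})$ of $Q_{\aa_n,\Lambda}(\nu)$ in \eqref{def_Gamma_alpha}. The correct normalization carries the opposite sign: on the diagonal one finds $1-(K_r(\nu)e_{n,k},e_{n,k})=+\frac{1}{|\ln r|}\bigl(2\pi\aa_n+\gamma+\ln\frac{\sqrt\nu}{2\ii}\bigr)+O(|\ln r|^{-2})$, so that $+\frac{|\ln r|}{2\pi}\bigl(I-K_r(\nu)\bigr)\big|_{\cR}=Q_{\aa_n,\Lambda}(\nu)+O(|\ln r|^{-1})$ is the statement you need. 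This is not cosmetic: carrying your minus sign through the inversion and the reassembly produces $R_0(\nu)$ \emph{minus} the sum in \eqref{def_delta_op_const}, which is not the resolvent of $-\Delta_{\aa_n,\yy^{(n)}+\Lambda}$. Relatedly, the parenthetical claim that the conjugate kernel $\ov{G_\nu}$ in \eqref{def_delta_op_const} ``supplies the sign'' is wrong -- conjugation inside the inner product produces no sign -- and when matching the constant term $g_0(\nu)=-\frac{1}{2\pi}\bigl(\gamma+\ln\frac{\sqrt\nu}{2\ii}\bigr)$ of the Hankel expansion against the diagonal of \eqref{def_Gamma_alpha}, the sign of $\gamma$ must be tracked carefully (the paper's own footnotes show this bookkeeping is a recurrent source of slips in this literature). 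With the sign corrected and the uniformity estimates actually carried out, your route does yield both parts of Theorem~\ref{thm_convergence_crystal} with the rate $|\ln r|^{-1}$.
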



\begin{remark}
  The assumption $\lambda_n \neq \lambda_m$ for $n \neq m$
  is motivated by our application, but it is only technical.
  If we drop this assumption, then one can still
  prove convergence of $\H_{r, \lambda_n}$ to a Schr\"odinger operator with point interactions
  supported on a more complicated lattice with 
  (in general) non-constant interaction strength;
  \cf ~\cite{behrndt_holzmann_lotoreichik_2014}. 
  However, 
  in this case the spectral analysis of the limit operator presents
  a rather difficult problem.
  For special interesting geometries there are results available in the 
  literature~\cite{L16}. 
\end{remark}

Combining the statements of 
Theorem~\ref{thm_convergence_crystal} and of
Proposition~\ref{thm_existance_spectral_gap_delta_op} 
with the perturbation result~\cite[Satz 9.24 b)]{weidmann1},
we obtain the following claim on the spectrum of 
$\H_{r, \lm_n}$.

\begin{prop} \label{proposition_existance_spectral_gap_H}
  Let $0 < \lm_1 < \lm_2 < \dots < \lm_N$,  
  let $a > 0$ be fixed
  and define $\eta := \frac{2 \pi}{|\Omega|} + 1$.
  Let the operator $\H_{r,\lm_n}$ be as in~\eqref{def_H}.
  Then there exist a lattice $\Lambda$ 
  and constants $c_1, \dots, c_N$ 
  (that appear in~\eqref{def_eps})
  such that
  \begin{equation*}
  	\big(\lm_n - \eta - a, \lm_n + \eta + a\big)
  	\subset  \rho(\H_{r,\lm_n}) 
  \end{equation*}
  for all sufficiently small $r > 0$ 
  and all $n \in \{ 1, \dots, N \}$.
\end{prop}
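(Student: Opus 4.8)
The plan is to combine the norm resolvent convergence from Theorem~\ref{thm_convergence_crystal}(i) with the existence of a suitable spectral gap for the limit operator from Proposition~\ref{thm_existance_spectral_gap_delta_op}, and then transfer the gap to $\H_{r,\lm_n}$ by a perturbation argument. The first step is to fix the target interval. Set $I_n := [\lm_n - \eta - a, \lm_n + \eta + a]$ for each $n$. I want to find a single lattice $\Lambda$ and a coupling $\aa_n$ for each $n$ so that $I_n$ lies in a spectral gap of $-\Delta_{\aa_n, \yy^{(n)}+\Lambda}$. Since shifting by $\yy^{(n)}$ is a unitary equivalence (see~\eqref{eq:OP_PI}) that does not change the spectrum, it suffices to arrange this for $-\Delta_{\aa_n,\Lambda}$. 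Proposition~\ref{thm_existance_spectral_gap_delta_op} already gives, for any prescribed compact interval, \emph{some} lattice and coupling with that interval inside a gap; the subtlety here is that I need the \emph{same} lattice $\Lambda$ to work simultaneously for all $n$, because $w_r$ in~\eqref{def_eps} is built from a single lattice.

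The main obstacle is exactly this simultaneity, together with the fact that $\aa_n$ is not a free parameter: by~\eqref{eq:aa_C} it is determined by $c_n$ via $\aa_n = -c_n \frac{\lm_n|\Omega|}{4\pi^2} + \frac{C}{2\pi|\Omega|^2}$, where $c_n$ is genuinely free but $\lm_n, |\Omega|, C$ are fixed once $\Lambda$ and $\Omega$ are chosen. The strategy is to first fix $\Lambda$ by rescaling. Recall from the proof of Proposition~\ref{thm_existance_spectral_gap_delta_op} that rescaling $\Lambda_0 \mapsto \Lambda_k = k^{-1}\Lambda_0$ sends the gap endpoints $E_1, E_2$ to $k^2 E_1, k^2 E_2$ while shifting the coupling by $-\frac{\ln k}{2\pi}$. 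Choosing $k$ large makes the gap $(k^2 E_1, k^2 E_2)$ as long as desired, so I can ensure a single lattice $\Lambda$ possesses a gap long enough to contain all the intervals $I_1, \dots, I_N$ simultaneously. Then, for each $n$ separately, I adjust the free constant $c_n$ so that the coupling $\aa_n$ dictated by~\eqref{eq:aa_C} places $I_n$ inside that gap. Because $\aa_n$ depends affinely and surjectively on $c_n$ (the coefficient $-\frac{\lm_n|\Omega|}{4\pi^2}$ is nonzero since $\lm_n > 0$), every value of $\aa_n$ is attainable by a suitable $c_n$, so the gap of $-\Delta_{\aa_n,\Lambda}$ can be positioned to cover $I_n$.

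Once the lattice and the constants $c_1, \dots, c_N$ are fixed so that $I_n \subset \rho(-\Delta_{\aa_n, \yy^{(n)}+\Lambda})$, with a positive distance $\dd_n := \dist(I_n, \s(-\Delta_{\aa_n, \yy^{(n)}+\Lambda})) > 0$, I invoke Theorem~\ref{thm_convergence_crystal}(i): $(\H_{r,\lm_n} - \nu)^{-1}$ converges to $(-\Delta_{\aa_n, \yy^{(n)}+\Lambda} - \nu)^{-1}$ in operator norm at rate $O(|\ln r|^{-1})$. Norm resolvent convergence implies that for any point $\lm$ in the open gap of the limit, $\lm$ eventually lies in $\rho(\H_{r,\lm_n})$ once $r$ is small enough; this is precisely the content of the cited perturbation result~\cite[Satz 9.24 b)]{weidmann1}, which guarantees that isolated gaps in the spectrum of the limit persist under norm resolvent convergence, and that no spectrum of the approximating operators can leak into a compact subinterval strictly inside the limit gap. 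Applying this to the compact interval $I_n$, which sits strictly inside the gap with margin $\dd_n > 0$, yields $I_n \subset \rho(\H_{r,\lm_n})$ for all sufficiently small $r$.

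Finally, I collect the finitely many smallness thresholds: for each $n \in \{1, \dots, N\}$ the convergence argument furnishes some $r_n > 0$ below which $I_n \subset \rho(\H_{r,\lm_n})$; taking $r < \min_n r_n$ handles all $n$ at once. This gives the stated inclusion $(\lm_n - \eta - a, \lm_n + \eta + a) \subset \rho(\H_{r,\lm_n})$ for all sufficiently small $r$ and all $n$, completing the proof. The only genuinely delicate point is the bookkeeping of the previous paragraph—ensuring a common lattice whose gap is simultaneously wide enough and correctly positionable for every $n$ through the free constants $c_n$—while the transfer of the gap from the limit operator to $\H_{r,\lm_n}$ is a direct and standard consequence of norm resolvent convergence.
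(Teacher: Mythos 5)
Your proposal is correct and follows essentially the same route as the paper: a single lattice and a single coupling $\aa$ whose gap contains all the target intervals (via the rescaling in Proposition~\ref{thm_existance_spectral_gap_delta_op}), realization of that coupling through the free constants $c_n$ (the paper's explicit choice~\eqref{eq:cn} makes $\aa_n=\aa$ for every $n$, which is exactly what your surjectivity remark about the affine map $c_n\mapsto\aa_n$ delivers), and transfer of the gap to $\H_{r,\lm_n}$ via the norm resolvent convergence of Theorem~\ref{thm_convergence_crystal}\,(i) combined with \cite[Satz 9.24 b)]{weidmann1}. The only cosmetic difference is that the paper runs the last step through spectral projections ($\|\E-\E_r\|<1$ forces $\dim\ran\E_r=\dim\ran\E=0$), while you invoke the same Weidmann result as a non-leakage statement for a compact subinterval of the limit gap; the mathematical content is identical.
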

\begin{proof}
	Let $I_n := (\lm_n - \eta - a, \lm_n + \eta + a)$ and
	$J_n := (\lm_n - \eta - 2a, \lm_n + \eta + 2a)$.
	Then, by Proposition~\ref{thm_existance_spectral_gap_delta_op}
	there exists a lattice 
	$\Lambda$ and a coupling constant $\aa \in \dR$ such that 
	\begin{equation*}
    	(\lm_1 - \eta - 2 a, \lm_N + \eta + 2 a) 
    	\subset \rho(-\Delta_{\aa, \Lambda}).
	\end{equation*}
	This implies, in particular, 
	that 	for any $n \in \{ 1, \dots, N \}$ 
	\begin{equation*}
    	\ov{I}_n \subset J_n	\subset \rho(-\Delta_{\alpha, \yy^{(n)} + \Lambda})
        = 
        \rho(-\Delta_{\alpha, \Lambda}),
	\end{equation*}
	where the last equation holds due to translational invariance.
	Next, choose the constants $c_n$ in~\eqref{def_eps} as
	\begin{equation}\label{eq:cn}
    	c_n = \frac{4 \pi^2}{\lm_n |\Omega|} \left( \frac{C}{2 \pi |\Omega|^2} - \alpha \right),
	\end{equation}
	where $C$ is given as in~\eqref{eq:aa_C}. 
	Theorem~\ref{thm_convergence_crystal}\,(i)
	implies that $\H_{r,\lm_n}$ converges in the norm resolvent 
	sense to $-\Delta_{\aa, \yy^{(n)} + \Lambda}$.
	Finally, let  $\E := \E(I_n)$ and $\E_r := \E_r(I_n)$
	be the spectral projections corresponding to the interval 
	$I_n$ and the operators
	$-\Delta_{\aa, \yy^{(n)} + \Lambda}$ 
	and $\H_{r,\lm_n}$, respectively. 
	Since  $\H_{r,\lm_n}$ 
	converges in the norm resolvent sense to 
	$-\Delta_{\aa, \yy^{(n)} + \Lambda}$,
	it follows from \cite[Satz 9.24 b)]{weidmann1} that
  	$\left\| \E - \E_r \right\| < 1$
	for all sufficiently small $r > 0$. 
	Hence, employing \cite[Satz 2.58 a)]{weidmann1} we conclude
	\begin{equation*}
		\dim \ran \E_r = \dim\ran\E =  0
	\end{equation*}
	for all sufficiently small $r > 0$.
	This implies  $I_n \subset \rho(\H_{r,\lm_n})$.
\end{proof}

Now, we are prepared to prove the main result about the spectrum of $\Th_r$. 
\begin{proof}[Proof of Theorem~\ref{theorem_spectrum_Theta}]
	Let $a > 0$ be given. Set 
	$\eta := 2 \pi|\Omega|^{-1} + 1$ and 
	$I_n := (\lm_n - \eta - a, \lm_n + \eta + a)$.
	Choose a lattice $\Lambda$ and the constants $c_1, \dots, c_N$ 
	(that appear in~\eqref{def_eps}) such that
	$I_n \subset \rho(\H_{r,\lm_n})$ for all sufficiently small $r > 0$,
	which is possible
	by Proposition~\ref{proposition_existance_spectral_gap_H}.
	Recall that $\lm \in \rho(\Th_r)$ \iff
	$\lm \in \rho(\H_{r,\lm})$;
	we are going to verify this property for $\lambda$
	belonging to a small neighborhood of~$\lambda_n$.
	Since $I_n \subset \rho(\H_{r,\lm_n})$,	it follows
	from the spectral theorem that
	\begin{equation}\label{eq:Hr_est}
    	\big\| (\H_{r,\lm_n} - \nu) f \big\|_{L^2(\dR^2)} \geq \eta \|f\|_{L^2(\dR^2)}
	\end{equation}
	for all $\nu \in ( \lm_n - a, \lm_n + a)$ and all $f \in H^2(\dR^2)$.
	Note that the definition of $w_r$ in~\eqref{def_eps} implies
	\begin{equation*}
	    \| w_r - 1 \|_{L^\infty} \leq \frac{\eta}{\lm_1} \frac{1}{r^2|\ln r|}
	\end{equation*}
	for $r > 0$ small enough. 
	Therefore, it holds for $\zeta\in\dR$ with 
	$|\zeta| < \lm_1 r^2 |\ln r|$ that
	\begin{equation}\label{eq:est_wr}
	    |\zeta| \|w_r - 1 \|_{L^\infty} 
        < 
        \lm_1 r^2|\ln r|\cdot \frac{\eta}{\lm_1} \frac{1}{r^2|\ln r|}
        = \eta.
	\end{equation}
	For small enough $r > 0$
	we have $|\zeta| < \lm_1 r^2 |\ln r| < a$ 
	and the estimate~\eqref{eq:Hr_est} implies
	for $f \in H^2(\dR^2)$
	\begin{equation*}
    \begin{split}
    	\big\|\H_{r,\lm_n+\zeta}f - (\lm_n+\zeta)f \big\|_{L^2(\dR^2)}
        &= 
        \big\| (-\Delta - (\lm_n + \zeta) (w_r - 1) - (\lm_n + \zeta)) f \big\|_{L^2(\dR^2)} 
        \\[0.4ex]
	    &\geq  
	    \big\| (\H_{r,\lm_n} - (\lm_n + \zeta)) f \big\|_{L^2(\dR^2)}
         -\big\| \zeta (w_r - 1) f \big\|_{L^2(\dR^2)} \\[0.4ex]
		&\geq 
		\left( \eta - |\zeta| \| w_r - 1 \|_{L^\infty} \right) \| f \|_{L^2(\dR^2)}.
	\end{split}
	\end{equation*}
	This and~\eqref{eq:est_wr}
	imply $\lm_n + \zeta \in \rho(\H_{r,\lm_n + \zeta})$, 
	which yields $\lm_n + \zeta \in \rho(\Th_r)$.
\end{proof}
We conclude this section with an explanation how to construct a crystal such that given numbers 
$0 < \lm_1 < \lm_2 \dots < \lm_N$ belong to gap(s) of $\s(\Th_r)$.
First, for a given lattice $\Lambda_0$ 
we choose $\aa_0 \in \dR$ such that
$\aa_0 < g(\bfo,\tt_0) + \frac{1}{2 \pi}(\gamma + \ln 2)$,
where $\tt_0 = -\frac{1}{2}(\sfb_1 + \sfb_2)$ and $\sfb_1$ and $\sfb_2$ are the basis vectors of the dual lattice $\Gamma_0$. 
By Proposition~\ref{thm_spectrum_delta_op}
it holds that $0 \in \rho(-\Delta_{\aa_0, \Lambda_0})$. 
Finding (or estimating) the smallest zeros 
of the function 
\[
	E\mapsto  g(E,\tt) + \frac{1}{2\pi}(\gamma + \ln 2) -  \aa_0 
\]
yields an approximation for the upper and 
the lower endpoints of the bands of the spectrum of
$-\Delta_{\aa_0, \Lambda_0}$; \cf 
Proposition~\ref{thm_spectrum_delta_op}.
Next, choose $k > 0$ as in the proof of 
Proposition~\ref{thm_existance_spectral_gap_delta_op} such that
\[
	(\lm_1 - \eta - 2 a, \lm_N + \eta + 2 a) 
	\subset 
	\rho(-\Delta_{\aa_k, \Lambda_k}),
\]	 
where $\eta = 2 \pi|\Omega|^{-1} + 1$,
$\Lambda_k := k^{-1} \Lambda_0$, $\aa_k = \aa_0 - \frac{\ln k}{2\pi}$,
and $a$ is a small positive constant.
Finally, we define the constants $c_n$
via the formula~\eqref{eq:cn} in the proof of 
Proposition~\ref{proposition_existance_spectral_gap_H} (with $\alpha$ replaced by $\alpha_k$).
Then the crystal that is specified via the lattice $\Lambda_k$ 
and $w_r$ as in~\eqref{def_eps} satisfies
$\{\lm_1, \dots, \lm_N\} \subset \rho(\Th_r)$
for all sufficiently small $r > 0$.

\section{Spectral analysis of the operator $\Ups_r$}
\label{section_div}
In this section we prove that there are no gaps in the spectrum of the operator 
$\Ups_r = -\div (w_r^{-1} \grad f)$ in bounded subsets of $[0, \infty)$, if $r > 0$ is sufficiently small. 
The methods employed in this section are completely different from the methods
in Section~\ref{section_weighted},
partly because the aim is to prove the statement
of an opposite type.
Using the Floquet-Bloch theory for differential operators with periodic coefficients we will see that 
$\s(\Ups_r)$ consists of bands and that the `lowest bands' overlap for small $r > 0$.
The proof of this result is inspired by ideas coming 
from~\cite{figotin_kuchment_1996_1} and makes additionally use 
of a result in~\cite{rauch_taylor_1975} on the convergence of eigenvalues of the Laplace operator on domains with small holes.

First, we set up some notations.
For a fixed $r \ge 0$ we define the sesquilinear form
\begin{equation*}
	\frh_r[f, g] := 
	\big( w_r^{-1} \nabla f, \nabla g \big)_{L^2(\dR^2;\dC^2)}	,
	\qquad \dom \frh_r := H^1(\dR^2),
\end{equation*}
with $w_r$ given by~\eqref{def_eps} for $r > 0$ and $w_r \equiv 1$ for $r = 0$. It is clear that $\frh_r$ is well-defined and symmetric. 
Moreover, by the definition of $w_r$, there exists for any sufficiently small 
fixed $r \ge 0$ a constant $\kappa_r \in (0,1]$ such that
\begin{equation*} 
	0 \le \kappa_r \| \nabla f \|_{L^2(\dR^2;\dC^2)}^2 \le\frh_r[f] \leq \| \nabla f \|_{L^2(\dR^2;\dC^2)}^2
\end{equation*}
for all $f \in H^1(\dR^2)$. This implies that $\frh_r$ is closed. Thus, by the first representation 
theorem~\cite[Thm.~VI~2.1]{kato} there exists a uniquely determined self-adjoint operator associated to the form~$\frh_r$,
which is $\Ups_r$ as in \eqref{def_Upsilon} for $r > 0$ and the free Laplacian $-\Delta$ for $r = 0$.

In order to describe the spectrum of $\Ups_r$, $r \ge 0$,
we use that its coefficients are periodic with respect to the 
lattice $\Lambda$ given by~\eqref{Lambda_and_cell}. 
Let the period cell $\wh\Gamma$ and the Brillouin zone $\wh\Lambda$ 
associated to $\Lambda$ be given by~\eqref{Lambda_and_cell} 
and~\eqref{Brillouin}, respectively, and define for $\tt \in \wh\Lambda$ the 
subspace $\cH(\theta)$ of $L^2(\wh\Gamma)$ as the set of all 
$f \in H^1(\wh\Gamma)$ that satisfy the so-called 
\emph{semi-periodic boundary conditions}, \ie
\begin{equation*}
	\cH(\tt) := \Big\{f\in H^1(\wh\Gamma)\colon
	f( t \sfa_2) = e^{-\ii \tt \sfa_1} f( t \sfa_2 + \sfa_1),
 	f( t \sfa_1) = e^{-\ii \tt \sfa_2} 
 	f( t \sfa_1 + \sfa_2),~t \in [0, 1)\Big\}.
\end{equation*}
Defining now for $r \ge 0$ and $\tt \in \wh\Lambda$ the form
\begin{equation*} 
	\frh_{r, \tt}[f, g] 
	:= \big( w_r^{-1} \nabla f, \nabla g \big)_{L^2(\wh \Gamma;\dC^2)},
	\qquad \dom \frh_{r, \tt} := \cH(\tt),
\end{equation*}
we see, similarly as above, that it satisfies the assumptions of the first representation theorem.
Hence, there exists a uniquely determined self-adjoint operator $\Ups_{r,\tt}$
in~$L^2(\wh\Gamma)$ associated to $\frh_{r, \tt}$.

It is not difficult to see that for all $\tt \in \wh\Lambda$ 
the operator $\Ups_{r,\tt}$, $r \ge 0$,
has a compact resolvent. Hence, its spectrum is purely discrete and we denote
its eigenvalues (counted with multiplicity) by
\begin{equation*}
	0 \leq \lm_{r,1}(\tt) \leq \lm_{r,2}(\tt) \leq 
	\lm_{r,3}(\tt) \leq \dots\\
\end{equation*}
Since $w_r^{-1}$ is periodic, we can 
apply the results from~\cite[Sec. 4]{brown_hoang_plum_wood_2011} 
(\cf also the footnote on p.~3 of \cite{brown_hoang_plum_wood_2011}) and get, 
combined with~\cite{dahlberg_trubowitz_1982} or \cite[Thm.~5.9]{K16}, 
the following characterization for the spectrum of $\Ups_r$.
\begin{prop} \label{proposition_overlap_free}
	For any $r \ge 0$ it holds 
  	\begin{equation*}
    	\s(\Ups_r) = \bigcup_{n=1}^\infty \big[a_{r, n}, b_{r, n}\big],
    	\qquad\qquad
    	a_{r, n} := \min_{\tt \in \wh\Lambda}
    	\lm_{r,n}(\tt),\quad
    	b_{r, n} := \max_{\tt \in \wh\Lambda}
    	\lm_{r,n}(\tt).  	  	
	\end{equation*}
	Moreover, for all $n_0 \in \dN$ there exists $\beta = \beta(n_0) \in (0, 1)$ 
	such that
	$b_{0, n} > (1 + \beta) a_{0, n+1}$	%
	holds for $n = 1,2,\dots,n_0$.
\end{prop}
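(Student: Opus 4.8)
The plan is to treat the two assertions separately: the band description is a routine consequence of Floquet--Bloch theory, whereas the overlap of the lowest bands at $r=0$ is a concrete fact about the free Laplacian that I would reduce to counting lattice points in disks.

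For the band structure I would use the Floquet--Bloch (Gelfand) transform adapted to $\Lambda$. Since $w_r^{-1}$ is $\Lambda$-periodic, bounded and bounded below by a positive constant, $\Ups_r$ is unitarily equivalent to the direct integral $\int^{\oplus}_{\wh\Lambda}\Ups_{r,\tt}\,\dd\tt$ of the fibre operators on $L^2(\wh\Gamma)$ with the semi-periodic boundary conditions encoded in $\cH(\tt)$; this is exactly the setting of \cite[Sec.~4]{brown_hoang_plum_wood_2011}. Hence $\s(\Ups_r)=\overline{\bigcup_{\tt\in\wh\Lambda}\s(\Ups_{r,\tt})}$. Each fibre has compact resolvent, and by \cite{dahlberg_trubowitz_1982} (or \cite[Thm.~5.9]{K16}) each band function $\tt\mapsto\lm_{r,n}(\tt)$ is continuous on the compact connected set $\wh\Lambda$; therefore its range is the compact interval $[a_{r,n},b_{r,n}]$ with the endpoints as stated. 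Because $a_{r,n}\to\infty$ as $n\to\infty$, the union of the bands is already closed and the closure may be omitted, which yields the first display.

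For the overlap I specialise to $r=0$, where $w_0\equiv1$ and $\Ups_{0,\tt}$ is the Laplacian on $\wh\Gamma$ subject to the semi-periodic conditions. The exponentials $x\mapsto e^{\ii(\tt+\gamma)\cdot x}$, $\gamma\in\Gamma$, form a complete orthogonal system of eigenfunctions of $\Ups_{0,\tt}$ with eigenvalues $|\tt+\gamma|^2$, so $(\lm_{0,n}(\tt))_{n\ge1}$ is the nondecreasing rearrangement of $\{\,|\tt+\gamma|^2:\gamma\in\Gamma\,\}$. The band edges can then be read off from lattice-point counts: writing $R=\sqrt E$ and using that $x\mapsto\#\{\gamma\in\Gamma:|\gamma-x|\le R\}$ is $\Gamma$-periodic while $-\tt$ runs through a fundamental domain of $\Gamma$, one gets that $b_{0,n}>E$ holds iff some closed disk of radius $R$ contains at most $n-1$ points of $\Gamma$, and $a_{0,n+1}<E$ holds iff some open disk of radius $R$ contains at least $n+1$ points of $\Gamma$. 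Consequently $b_{0,n}>a_{0,n+1}$ is equivalent to the existence of a radius $R$ at which the number of points of $\Gamma$ in a disk of radius $R$ differs by at least two between two suitably placed centres.

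The main obstacle is precisely this fluctuation statement, and it is here that two-dimensionality is indispensable: in one dimension the free bands merely touch, so that $b_{0,n}=a_{0,n+1}$ and no such $\beta$ exists. I would settle it geometrically, comparing a disk centred at a point of $\Gamma$ with one centred at a deep hole of $\Gamma$ (a point farthest from $\Gamma$): the average count over centres equals $\pi R^2/|\wh\Gamma|$ and grows quadratically, while the count is an integer-valued and genuinely discontinuous function of the centre, so for suitable radii the spread between the largest and smallest counts attains the value two at count level $n$; spelling out the two extremal centres produces quasimomenta $\tt_1,\tt_2\in\wh\Lambda$ with $\lm_{0,n}(\tt_1)>\lm_{0,n+1}(\tt_2)$, i.e.\ $b_{0,n}>a_{0,n+1}$. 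Since only the finitely many indices $n=1,\dots,n_0$ occur and $a_{0,n+1}\ge a_{0,2}>0$, I would finally set $\beta:=\min_{1\le n\le n_0}\big(b_{0,n}/a_{0,n+1}-1\big)>0$, shrinking it if necessary so that $\beta\in(0,1)$.
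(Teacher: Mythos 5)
Your treatment of the first assertion (the band description of $\s(\Ups_r)$) is correct and is, in substance, what the paper does: the paper simply cites the Floquet--Bloch theory for periodic discontinuous coefficients from \cite[Sec.~4]{brown_hoang_plum_wood_2011}, and your direct-integral decomposition, the continuity of the band functions, and the observation that $a_{r,n}\to\infty$ makes the union of bands closed reconstruct that citation soundly. (One small misattribution: continuity of $\theta\mapsto\lm_{r,n}(\theta)$ comes from the min--max principle and standard perturbation theory, not from \cite{dahlberg_trubowitz_1982} or \cite[Thm.~5.9]{K16}; the paper invokes those references for the \emph{second} assertion of the proposition, which is precisely where your argument breaks down.)

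For the overlap statement your reduction to lattice-point counting is correct and equivalent to the claim, but the step you yourself single out as ``the main obstacle'' is asserted rather than proved, and the tools you invoke provably cannot close it. Write $N_{\mathrm{cl}}(x,R)$ and $N_{\mathrm{op}}(x,R)$ for the number of points of $\Gamma$ in the closed, respectively open, disk of radius $R$ centred at $x$; you need, for each $n\le n_0$, a single radius $R$ with $\min_x N_{\mathrm{cl}}(x,R)\le n-1$ \emph{and} $\max_x N_{\mathrm{op}}(x,R)\ge n+1$. Averaging over the centre gives exactly $\pi R^2/|\wh\Gamma|$ for both counts, hence $\min_x N_{\mathrm{cl}}(x,R)\le\lfloor\pi R^2/|\wh\Gamma|\rfloor$ and $\max_x N_{\mathrm{op}}(x,R)\ge\lceil\pi R^2/|\wh\Gamma|\rceil$: a spread of \emph{one}. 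Choosing $n-1<\pi R^2/|\wh\Gamma|<n$ yields the bound $n-1$ for the minimum but only $n$ for the maximum; choosing $n<\pi R^2/|\wh\Gamma|<n+1$ yields $n+1$ for the maximum but only $n$ for the minimum. So the averaging-plus-integrality-plus-discontinuity reasoning is off by one at every radius, and the phrase ``so for suitable radii the spread attains the value two at count level $n$'' is a restatement of the claim, not a deduction. This is not a repairable slip of exposition: every property you actually use (integer values, discontinuity in the centre, growing average) holds verbatim for $\Gamma=\dZ\subset\dR$, where, as you note yourself, the free bands merely touch and the conclusion is false; hence no argument relying only on those properties can be complete, and the genuinely two-dimensional input never enters your sketch. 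Note also that the natural symmetry argument at a lattice-point centre (there the count jumps by at least two, so it skips certain levels $n$) only produces the \emph{non-strict} inequality $b_{0,n}\ge a_{0,n+1}$, i.e.\ touching bands, whereas the strict inequality with the quantitative margin $\beta$ is exactly what the proof of Theorem~\ref{theorem_divergence} later consumes. The paper does not reprove this fact about the free Laplacian in $\dR^2$; it imports it from \cite{dahlberg_trubowitz_1982} or \cite[Thm.~5.9]{K16}. You should either do the same or supply the missing geometric lemma; once strict overlap $b_{0,n}>a_{0,n+1}$ is available, your final extraction of $\beta$ (using $a_{0,2}>0$ and the finiteness of $n_0$) is fine.
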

Our goal is to show that for sufficiently small $r > 0$ 
the relation $b_{r, n} > a_{r, n+1}$ is persisted.
For this purpose, we need the following auxiliary lemma,
which provides a useful estimate for the $L^2$-norm of a function in 
a finite union of disks with radius $r$ in terms of the $H^1$-norm over the whole domain. In what follows it will be convenient to use the notation
\begin{equation}\label{eq:cBs}
	\cB_s := Y +  B_s(\bfo),\qquad s > 0,
\end{equation}	
where $Y$ is as in~\eqref{def:Y}.
\begin{lem} \label{lemma_estimate_forms}
	Let $r > 0$ be sufficiently small and let $\cB_r$ be as in~\eqref{eq:cBs}. 
	Then, there exists a constant $\kappa > 0$ such that
	\begin{equation*}
    	\| f \|_{L^2(\cB_r)}^2 
    	\leq 
    	\kappa r \| f \|_{H^1(\wh\Gamma \sm \cB_r)}^2 
    	+ \kappa r^2 \| f \|_{H^1(\wh\Gamma)}^2
  \end{equation*}
  holds for all $f \in H^1(\wh\Gamma)$.
\end{lem}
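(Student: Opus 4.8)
The plan is to establish the estimate one disk at a time and then sum over $n=1,\dots,N$. First I would fix a radius $R>0$, independent of $r$, small enough that the balls $B_R(\yy^{(n)})$, $n=1,\dots,N$, are pairwise disjoint and contained in $\wh\Gamma$ (after choosing a suitable representative of the period cell, which is possible because the points $\yy^{(n)}$ are distinct modulo $\Lambda$). Then, for all sufficiently small $r$, each annulus $A_n := B_R(\yy^{(n)}) \sm B_r(\yy^{(n)})$ lies in $\wh\Gamma \sm \cB_r$. It therefore suffices to prove, for each $n$, the single-disk bound
\begin{equation*}
\| f \|_{L^2(B_r(\yy^{(n)}))}^2 \le \kappa r\, \| f \|_{H^1(A_n)}^2 + \kappa r^2 \, \| \nabla f \|_{L^2(B_r(\yy^{(n)}))}^2,
\end{equation*}
since summing over $n$, together with $\bigcup_n A_n \subset \wh\Gamma\sm\cB_r$ (the $A_n$ being disjoint) and $\bigcup_n B_r(\yy^{(n)}) = \cB_r \subset \wh\Gamma$, yields the assertion.

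To prove the single-disk bound I would center polar coordinates $(\rho,\phi)$ at $\yy^{(n)}$ and set $F(\rho):=f(\rho,\phi)$. For $0<\rho<r$ and $s\in(r,R)$ the fundamental theorem of calculus gives $F(\rho) = F(s) - \int_\rho^r F'(t)\,\dd t - \int_r^s F'(t)\,\dd t$. Averaging this identity over $s\in(r,R)$ isolates the only $\rho$-dependent term $\int_\rho^r F'\,\dd t$, so that $F(\rho) = A - \int_\rho^r F'(t)\,\dd t$, where $A=A(\phi)$ collects the $\rho$-independent contributions of $F(s)$ and of $\int_r^s F'$. Squaring via $|F(\rho)|^2 \le 2|A|^2 + 2\big(\int_\rho^r |F'|\,\dd t\big)^2$ and integrating against the area element $\rho\,\dd\rho\,\dd\phi$ over $B_r(\yy^{(n)})$ then reduces everything to a few elementary one-dimensional estimates.

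The decisive computation is the inner-gradient term: by the weighted Cauchy--Schwarz inequality $\big(\int_\rho^r|F'|\,\dd t\big)^2 \le \ln(r/\rho)\,\int_0^r|F'(t)|^2 t\,\dd t$, and the elementary identity $\int_0^r \rho\ln(r/\rho)\,\dd\rho = r^2/4$ makes this contribution \emph{log-free} and of order $r^2\,\| \nabla f\|_{L^2(B_r(\yy^{(n)}))}^2$ after integration in $\phi$ (using $|F'|\le|\nabla f|$). For the anchor term I would bound $|A|^2 \le \tfrac{C}{R-r}\int_r^R |F|^2\,\dd s + C\ln(R/r)\int_r^R|F'|^2 t\,\dd t$, again by Cauchy--Schwarz; integration against $\rho\,\dd\rho$ supplies the prefactor $r^2$, and passing from the unweighted radial integral $\int_r^R|F|^2\,\dd s$ to the norm over $A_n$ costs a further factor $1/r$ (because $s\ge r$ on $A_n$), leaving the order $r\,\|f\|_{H^1(A_n)}^2$; the stray $\ln(R/r)$ from the outer gradient is harmless since $r^2\ln(R/r)\le r$ for small $r$. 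Summing over $n$ gives the lemma.

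The main obstacle is the two-dimensional logarithmic singularity at the centres of the disks (reflecting $H^1(\dR^2)\not\hookrightarrow L^\infty$ and the vanishing logarithmic capacity of a point): a naive trace or rescaling argument against a collar of width comparable to $r$ yields the $L^2(A_n)$-term only with an $O(1)$ prefactor, which is too weak. Averaging over the \emph{macroscopic} annulus of fixed outer radius $R$ is precisely what supplies the extra factor $r$, while splitting the radial gradient integral at $\rho=r$ is what keeps the full-cell gradient term at the sharp order $r^2$ rather than $r^2|\ln r|$.
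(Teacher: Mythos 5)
Your proposal is correct, and it shares the paper's overall strategy: reduce to one disk at a time, pass to polar coordinates centred at $\yy^{(n)}$, and control $f$ on $B_r(\yy^{(n)})$ by its values further out via the fundamental theorem of calculus along rays. The technical route differs in two genuine respects. The paper uses a two-step identity, $\sff(\rho,\phi)=\sff(r,\phi)-\int_\rho^r\p_t\sff\,\dd t$ followed by $\sff(r,\phi)=\sff(R,\phi)-\int_r^R\p_t\sff\,\dd t$, controls the circle term $\sff(R,\cdot)$ by the trace theorem on $\wh\Gamma\sm\cB_R$ (citing McLean), and avoids all logarithms through the pointwise weight tricks $r/t\le 1$ and $\rho/t\le 1$ inside Cauchy--Schwarz. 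You instead average the FTC identity over the outer radius $s\in(r,R)$, which replaces the trace theorem by an elementary mean-value argument and produces the $L^2$-norm of $f$ over the annulus $A_n$ directly (the $1/r$ loss from the unweighted radial measure being exactly compensated by the factor $r^2$ coming from $\int_0^r\rho\,\dd\rho$); and you use log-weighted Cauchy--Schwarz, with the inner term made log-free by $\int_0^r\rho\ln(r/\rho)\,\dd\rho=r^2/4$ and the stray $\ln(R/r)$ absorbed via $r^2\ln(R/r)\le r$. What each buys: your argument is more self-contained (no trace theorem) and gives a slightly sharper single-disk bound, with $\|f\|^2_{H^1(A_n)}$ and $\|\nabla f\|^2_{L^2(B_r(\yy^{(n)}))}$ on the right-hand side instead of norms over all of $\wh\Gamma\sm\cB_r$ and $\wh\Gamma$; the paper's weight tricks keep every estimate free of logarithms and make the bookkeeping shorter. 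One small point to make explicit in a final write-up: as in the paper, the radial FTC manipulations should be justified either by first taking $f\in C^\infty\big(\cl(\wh\Gamma)\big)$ and concluding by density of smooth functions in $H^1(\wh\Gamma)$, or by invoking absolute continuity of $H^1$-functions along almost every ray.
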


\begin{proof}
	Throughout the proof $\kappa > 0$ denotes a generic constant.
	Choose $R > 0$ so small that
	\[
		B_{R}^{(n)} := \ov{B_R(\yy^{(n)})} \subset \wh \Gamma\quad \text{for all}\quad
		n \in \{1, \dots, N\}\qquad\text{and}\qquad
		B_R^{(n)}\cap B_R^{(m)} = \varnothing\quad
		\text{for}	\quad n\ne m.
	\]
	Moreover, assume also that $r \in (0, R)$.
	For fixed $n \in \{ 1, \dots, N\}$ we are going to prove the inequality
	\begin{equation} \label{equation_estimate_one_disk}
	    \| f \|_{L^2(B_{r}^{(n)})}^2 
	    \le  
	    \kappa r \| f \|_{H^1(\wh\Gamma \sm \cB_r)}^2    + \kappa r^2 \| f \|_{H^1(\wh\Gamma)}^2.
	\end{equation}
	By summing up over $n$ the claimed result follows.

	We denote by $\cl(\wh\Gamma)$ the closure of~$\wh\Gamma$.
	Since $C^\infty\big(\cl(\wh\Gamma)\big)$ is dense in $H^1(\wh\Gamma)$, 
	it suffices to prove~\eqref{equation_estimate_one_disk} for smooth functions. 
	Let $f \in C^\infty\big(\cl(\wh\Gamma)\big)$ be fixed.
	We use for its equivalent in polar coordinates $(\rho,\phi)$
	centered at $\yy^{(n)}$ the symbol
	$\sff(\rho, \phi) := f(y^{(n)}_1 + \rho \cos \phi, y^{(n)}_2 + \rho \sin \phi)$,
	where $y^{(n)}_1$ and $y^{(n)}_2$ denote the coordinates of $\yy^{(n)}$.
	Let $\rho > 0$ be such that $\rho < r$.
	Employing the main theorem of calculus we conclude
	\begin{equation*}
    	\sff(\rho, \phi) = 	\sff(r, \phi) - \int_{\rho}^{r} (\p_t \sff)(t, \phi) \dd t.
	\end{equation*}
	This implies 
	\begin{equation} \label{integral_boundary}
    \begin{split}
    	\rho\int_0^{2 \pi} |\sff(\rho, \phi)|^2 \dd \phi
        &
        \le 2 \rho \int_0^{2 \pi} |\sff(r, \phi)|^2 \dd \phi
        + 
        2 \rho \int_0^{2 \pi} \left|\int_{\rho}^{r} 
        (\p_t \sff)(t, \phi) \dd t\right|^2 \dd \phi.
    \end{split}
  	\end{equation}
	Similarly as above, one finds
	\begin{equation} \label{integral_boundary2}
  	\begin{split}
    	\rho \int_0^{2 \pi} |\sff(r, \phi)|^2 \dd \phi
        &
        \le 2 \rho \int_0^{2 \pi} |\sff(R, \phi)|^2 \dd \phi + 2 \rho \int_0^{2 \pi} 
        \left|\int_{r}^{R} (\p_t \sff)(t, \phi) \dd t\right|^2 \dd \phi.
    \end{split}
	\end{equation}
	By the trace theorem~\cite[Thm.~3.37]{mc_lean} applied for the domain $\wh \Gamma \sm \cB_R$ 
	and using that $\wh \Gamma\sm \cB_R \subset \wh \Gamma\sm \cB_{r}$ 
	we get 
	\begin{equation} \label{estimate1}
    \begin{split}
    	\int_0^{2 \pi} |\sff(R, \phi)|^2\rho \dd \phi 
	   	& \le \int_0^{2 \pi} |\sff(R, \phi)|^2 R \dd \phi 
        = \| f|_{\p B_{R}^{(n)}} \|_{L^2( \p B_{R}^{(n)})}^2 \\
        &\le 
        \kappa \| f \|_{H^1(\wh\Gamma \sm \mathcal{B}_{R})}^2 
        \le \kappa \| f \|_{H^1(\wh\Gamma \sm \mathcal{B}_{r})}^2.
   \end{split}
  \end{equation}
  Using the expression for the gradient in polar coordinates and the
  Cauchy-Schwarz inequality, we obtain
  \begin{equation} \label{estimate2}
  \begin{split}
		\rho \int_0^{2 \pi} \left|
		\int_{r}^{R}(\p_t \sff)(t, \phi) \dd t\right|^2 \dd \phi 
        &\leq 
        r R 
        \int_0^{2 \pi} \int_{r}^{R} 
        \left|(\p_t \sff)(t, \phi) \right|^2 \dd t \dd \phi \\
         &
         \leq R \int_0^{2 \pi} \int_{r}^{R} 
         \frac{r}{t} 
         \left|
             (\nabla f)(y^{(n)}_1 + t \cos \phi, y^{(n)}_2 + t \sin \phi) \right|^2 t \dd t \dd \phi \\
      &\leq  \kappa \| f \|_{H^1(\wh\Gamma \sm \mathcal{B}_{r})}^2.
    \end{split}
  \end{equation}
  Equations~\eqref{integral_boundary2},~\eqref{estimate1} and~\eqref{estimate2} imply
	\begin{equation} \label{estimate3} 
	\begin{split}
    	\rho \int_0^{2 \pi} |\sff(r, \phi)|^2 \dd \phi
          &
          \leq \kappa \| f \|_{H^1(\wh\Gamma\sm \mathcal{B}_{r})}^2.
    \end{split}
  \end{equation}
  In a similar way as in~\eqref{estimate2} one shows 
  \begin{equation} \label{estimate4}
    \begin{split}
      \rho \int_0^{2 \pi} \left|\int_{\rho}^{r}
              (\p_t \sff)(t, \phi) \dd t\right|^2 \dd \phi
        & \leq
        r \rho \int_0^{2 \pi} \int_{\rho}^{r} \left|
              (\p_t \sff)(t, \phi) \right|^2 \dd t \dd \phi\\
        & = r \int_0^{2 \pi} \int_\rho^{r} \frac{\rho}{t} \left|
              (\p_t \sff)(t, \phi) \right|^2 t \dd t \dd \phi      
		\le r \| f \|_{H^1(\wh\Gamma)}^2.
    \end{split}
  \end{equation}
	Hence, integrating~\eqref{integral_boundary} from $0$ to $r$ 
	with respect to $\rho$ and using~\eqref{estimate3} 
	and~\eqref{estimate4} we obtain
	\begin{equation*}
	\begin{split}
	  \| f \|_{L^2(B_{r}^{(n)})}^2
	  & =
	  \int_0^{r} \int_0^{2 \pi} |\sff(\rho, \phi)|^2 \rho \dd \phi \dd \rho 
	  \leq
	   \kappa \int_0^{r} \left( \| f \|_{H^1(\wh\Gamma \sm \cB_{r})}^2 +
	    r \| f \|_{H^1(\wh\Gamma)}^2 \right) \dd \rho \\[0.4ex]
	  & = \kappa r \| f \|_{H^1(\wh\Gamma \sm \cB_{r})}^2 + \kappa r^2 
	  \| f \|_{H^1(\wh\Gamma)}^2.\qedhere
	\end{split}
	\end{equation*}
\end{proof}
After these preliminary considerations, we are prepared to show that $\Ups_r$ 
has no gaps in the spectrum in any fixed compact subinterval of $[0,+\infty)$, if $r > 0$ is sufficiently small.
\begin{proof}[Proof of Theorem~\ref{theorem_divergence}]
	Throughout this proof $\kp,\kp',\kp'',\kp''' > 0$ denote generic constants.
	As no confusion can arise, we will use the abbreviation $(\cdot,\cdot)$ 
	for both scalar products $(\cdot,\cdot)_{L^2(\wh\Gamma)}$ and $(\cdot,\cdot)_{L^2(\wh\Gamma;\dC^2)}$, 
	and the shorthand $\|\cdot\|$ for the respective norms $\|\cdot\|_{L^2(\wh\Gamma)}$ 
	and $\|\cdot\|_{L^2(\wh\Gamma;\dC^2)}$.
	
	Fix $L > 0$ and let $n_0 := \min\{n\in\dN_0\colon b_{0,n} > L\}$. 
	Furthermore, we choose $\beta \in (0, 1)$ such that 
	$b_{0, n} > (1 + \beta) a_{0, n+1}$ for all $n \leq n_0$ (note that such a $\beta$ exists 
	by Proposition~\ref{proposition_overlap_free}). 
	Using the min-max principle~\cite[\S XIII.1]{RS-IV} we obtain 
	\begin{equation*}
		\lm_{r, n}(\tt) =
		\min_{\begin{smallmatrix} V \subset \cH(\tt)\\ \dim V = n\end{smallmatrix}} 
		\max_{\begin{smallmatrix} f\in V\\ \|f\| = 1 \end{smallmatrix}} 
		(w_r^{-1} \nabla f, \nabla f)  \leq 
        \min_{\begin{smallmatrix} V \subset \cH(\tt)\\ \dim V = n\end{smallmatrix}} 
		\max_{\begin{smallmatrix} f\in V\\ \|f\| = 1 \end{smallmatrix}}\|\nabla f\|^2 = 
        \lambda_{0, n}(\tt),\qquad\text{for all}\,~\tt \in \wh\Lambda.
	\end{equation*}
	Moreover, let the vectors $\tt_n^\pm \in \wh\Lambda$
	be such that $a_{0, n} = \lm_{0, n}(\tt_n^-)$ and $b_{0, n} = \lm_{0, n}(\tt_n^+)$
	for all $n \in \dN$.
	We aim to prove that $\lm_{r, n+1}(\tt_{n+1}^-) < \lm_{r, n}(\tt_n^+)$ holds for all sufficiently small $r > 0$
	and for any $n < n_0$. In addition, we will show that $\lm_{r, n_0}(\tt_n^+) > L$, which yields then the claim. Fix $n \in\dN$ such that $n < n_0$.
	Choose an $n$-dimensional subspace $W^+ = W^+(n, r)
	\subset \cH(\tt_n^+)$ such that
	\begin{equation*}
    	\lm_{r, n}(\tt_n^+)  = 
	    \min_{\begin{smallmatrix} V \subset \cH(\tt_n^+)\\ \dim V = n\end{smallmatrix}} 
		\max_{\begin{smallmatrix}f\in V\\ \|f\| = 1 \end{smallmatrix}} 
		(w_r^{-1} \nabla f, \nabla f) =
   		\max_{\begin{smallmatrix} f\in W^+\\ \|f\| = 1 \end{smallmatrix}} 
		(w_r^{-1} \nabla f, \nabla f).
	\end{equation*}
	Let $f \in W^+$ with $\| f \| = 1$ and fix $R > 0$ 
	such that $\Omega \subset B_R(\bfo)$.
	Furthermore, we define $\cB := \cB_{rR}$ as in~\eqref{eq:cBs}; in particular, we have $Y + r \Omega \subset\cB$.
	Since $w_r \equiv 1$ on $\wh\Gamma \sm \cB$, we get 
	\begin{equation}\label{eq:estimate}
		( w_r^{-1} \nabla f, \nabla f \big) \ge 
		\|\nabla f\|^2_{L^2(\wh\Gamma \sm \cB;\dC^2)}.
	\end{equation}
	Combining the inequalities $b_{0, n_0} \geq \lm_{0, n}(\tt_n^+) \geq \lm_{r, n}(\tt_n^+)$, 
	the estimate~\eqref{eq:estimate}, and 
	Lemma~\ref{lemma_estimate_forms} we obtain 
	\begin{equation*}
    \begin{split}
    	\| f \|_{L^2(\wh\Gamma \sm \cB)}^2 	& = 
    	\| f \|^2 - \| f \|_{L^2(\cB)}^2\\[0.4ex]
        & \ge1 - \kp r \| f \|_{H^1(\wh\Gamma \sm \cB)}^2 -\kp r^2\|f\|_{H^1(\wh\Gamma)}^2 \\[0.4ex]
		& \ge 
		1 - \kp r(1 + r)  - \kp r \| \nabla f \|_{L^2(\wh\Gamma \sm \cB;\dC^2)}^2 -\kp r^2\|\nabla f\|^2 \\[0.4ex]
        & \ge 
        1 - \kp' r - \kp'' \big(r+|\ln r|^{-1}\big)\cdot
        \big(w_r^{-1}\nabla f,\nabla f\big) \\[0.4ex]
        &\ge 1 - \kp' r - \kp'' \big(r + |\ln r|^{-1} \big) b_{0, n_0} \\
        &\ge 1 -\kp''' |\ln r|^{-1}
    \end{split}
  \end{equation*}
  for all sufficiently small $r > 0$.  Thus, we conclude
  \begin{equation*}	
		(w_r^{-1} \nabla f, \nabla f) \geq 
        \|\nabla f\|_{L^2(\wh\Gamma \sm \cB;\dC^2)}^2  \geq 
        \frac{\|\nabla f\|^2_{L^2(\wh\Gamma \sm \cB;\dC^2)}}{\|f\|_{L^2(\wh\Gamma \sm \cB)}^2} \big(1 - \kp |\ln r|^{-1}\big).
  \end{equation*}
  Taking now the maximum over all normalized
  functions $f \in W^+$ we deduce
  \begin{equation*}
    \begin{split}
  	\lm_{r, n}(\tt_n^+) &=
	 \max_{\begin{smallmatrix}	f\in W^+ \\ \| f \|=1\end{smallmatrix}} 
	 (w_r^{-1} \nabla f, \nabla f)\geq  
	\big(1 - \kp |\ln r|^{-1}\big) 
	\max_{\begin{smallmatrix} f\in W^+ \\ \| f \|=1\end{smallmatrix}} 
    \frac{\|\nabla f\|^2_{L^2(\wh\Gamma\sm\cB;\dC^2)}}{\|f\|_{L^2(\wh\Gamma \sm \cB)}^2}.
    \end{split}
  \end{equation*}
  Note that $\dim\spann\{f|_{\wh\Gamma \sm \cB}\colon f \in W^+ \} = n$ holds for all sufficiently small $r > 0$. 
  Indeed, suppose that this is not the case. Then, there exists  $f \in W^+$, $\|f\| = 1$, 
  such that $\| f \|_{L^2(\wh\Gamma \sm \cB)} = 0$.
  Thus, in view of $\supp f \subset \cB$, Lemma~\ref{lemma_estimate_forms} implies 
  \begin{equation*}
  \begin{split}
  	1 = \| f \|_{L^2(\cB)}^2 \leq \kappa r^2
  	\| f \|_{H^1(\wh\Gamma)}^2 
  	\leq \kappa r^2 + \kappa |\ln r|^{-1} \frh_{r, \tt_n^+}[f] 
        \leq \kappa b_{0, n_0} |\ln r|^{-1},
    \end{split}
  \end{equation*}
  which is a contradiction. Hence, we obtain
  \begin{equation*}
      \frac{\lm_{r, n}(\tt_{n}^+)}{1- \kappa |\ln r|^{-1}} \geq   
        \max_{\begin{smallmatrix}
		f\in W^+\\ \|f\| = 1 \end{smallmatrix}} 
		\frac{\|\nabla f\|_{L^2(\wh\Gamma\sm\cB;\dC^2)}^2}
		{\|f\|^2_{L^2(\wh\Gamma\sm\cB)}} \geq 
     	\min_{\begin{smallmatrix} V \subset \wt\cH(\tt_n^+)\\ 
     		\dim V = n
		\end{smallmatrix}} 
      	\max_{\begin{smallmatrix}
		f\in V\\ f \ne 0 \end{smallmatrix}} 
		\frac{\|\nabla f\|_{L^2(\wh\Gamma\sm\cB;\dC^2)}^2}
		{\|f\|^2_{L^2(\wh\Gamma\sm\cB)}} = 
		\mu_n(\tt_n^+),
  \end{equation*}
  where $\wt\cH(\tt_n^+) := \{ f|_{\wh\Gamma \sm \cB}\colon 
  f \in \cH(\tt_n^+) \}\subset L^2(\wh\Gamma \sm \cB)$ 
  and $\mu_n(\tt_n^+)$ is the $n$-th eigenvalue 
  of the self-adjoint operator in $L^2(\wh\Gamma\sm\cB)$
  associated to the closed, symmetric and densely defined form
  \[
	  \wt\cH(\tt_n^+)\ni f \mapsto \|\nabla f\|^2_{L^2(\wh\Gamma \sm \cB;\dC^2)}.
  \]	  
  The above form corresponds to the Laplace operator in 
  $L^2(\wh\Gamma \sm \cB)$ with semi-periodic boundary conditions on 
  $\p \wh\Gamma$  and Neumann boundary conditions on $\p \cB$.
  Finally, it is known from~\cite[Sec. 3]{rauch_taylor_1975}, 
  that $\mu_n(\tt_n^+)$ converges to $\lm_{0, n}(\tt_n^+) = b_{0,n}$,
  as $r \arr 0+$. Thus, it follows that for sufficiently small $r > 0$
  \begin{equation}\label{eq:overlap1}
		 b_{r,n} = \lm_{r, n}(\tt_n^+) \geq  
         \big(1 - \kappa |\ln r|^{-1}\big) \mu_n(\tt_n^+) \geq 
		 (1 + \beta)^{-1} b_{0,n} >
         a_{0,n+1} \geq 
         \lm_{r, n+1}(\tt_{n+1}^-) = a_{r,n+1}.
  \end{equation}
  Therefore, the first $n_0$ bands in $\s(\Ups_r)$ overlap. It follows by a similar argument that
  \begin{equation}\label{eq:overlap2}
    b_{r, n_0} = \lm_{r, n_0}(\tt_n^+) \ge  \big(1 - \kappa |\ln r|^{-1}\big) \mu_{n_0}(\tt_n^+) > L
  \end{equation}
  for sufficiently small $r > 0$. 
  We deduce from~\eqref{eq:overlap1} and~\eqref{eq:overlap2} the
  claimed inclusion $[0, L] \subset \s(\Ups_r)$.  
\end{proof}

\begin{appendix}

\section{Approximation of Schr\"odinger operators with infinitely many point interactions in $\dR^2$} 
\label{appendix_convergence_delta}

This appendix is devoted to the proof of Theorem~\ref{thm_convergence_crystal}.
Let $N \in \dN$, $\lm_1, \dots, \lm_N \in (0, \infty)$, $Y$, $\Lambda$, $w_r$ 
be as in Subsection~\ref{section_main_result} and let
the self-adjoint operator $\H_{r,\lm}$ be as in~\eqref{def_H}.

Let $\lm > 0$ and let a sufficiently small $r > 0$ be fixed. 
First, we derive a resolvent formula for $\H_{r,\lm}$.
To this aim, we define the set
\begin{equation*}
  \Omega_r := \bigcup_{\yy \in Y + \Lambda} (\yy + r \Omega) \subset\dR^2,
\end{equation*}
and introduce the operators
\begin{equation} \label{def_u}
	u_r \colon L^2(\dR^2) \arr L^2(\Omega_r), \qquad  (u_r f)(\xx) := (w_r (\xx) - 1) f(\xx),
\end{equation}
and
\begin{equation}\label{def_v}
	v_r\colon L^2(\Omega_r) \arr L^2(\dR^2), 
	\qquad
	(v_r f)(\xx) := 
	\begin{cases} 
		f(\xx), & \xx \in \Omega_r, 
		\\ 0,&\text{ else}. 
	\end{cases}
\end{equation}
Note that $\| u_r \| = \| w_r - 1 \|_{L^\infty} = \max_{n \in \{ 1, \dots, N \}} \mu_n\big(|\ln r|^{-1}\big) r^{-2}$ 
and  $\| v_r \| = 1$.  Moreover, the multiplication operator in $L^2(\dR^2)$ associated to 
$(w_r - 1)$ can be factorized as $(w_r-1) = v_r u_r$. Recall that we denote 
$(-\Delta - \nu)^{-1}$, $\nu \in \rho(-\Delta) = \dC \sm [0, \infty)$, by $R_0(\nu)$.
With these notations in hands we can derive an auxiliary resolvent formula for $\H_{r,\lm}$.
\begin{prop} \label{prop:resolvent_formula_1}
	Let $\lm, r > 0$ and $\nu \in \dC \sm \dR \subset \rho(\H_{r,\lm})$ be such that
	$|\Im\nu | > \lm\|w_r - 1\|_{L^\infty}$.
	%
	%
	Then, it holds $1 \in \rho(\lm u_r R_0(\nu) v_r)$ 	and
	\begin{equation}\label{eq:resolvent_identity}
		\big(\H_{r,\lm} - \nu\big)^{-1} 
		= 
		R_0(\nu) + \lm R_0(\nu) v_r\big( 1 - \lm u_r R_0(\nu) v_r \big)^{-1} u_r R_0(\nu).
	\end{equation}
\end{prop}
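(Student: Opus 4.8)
The plan is to verify formula~\eqref{eq:resolvent_identity} by the standard Birman--Schwinger/Krein scheme. First I would settle the invertibility claim. Since $-\Delta$ is self-adjoint with spectrum $[0,\infty)$, for every $\nu \in \dC \sm \dR$ one has the resolvent bound $\|R_0(\nu)\| \le |\Im \nu|^{-1}$, because $\dist\big(\nu, [0,\infty)\big) \ge |\Im\nu|$. Combining this with $\|u_r\| = \|w_r-1\|_{L^\infty}$ and $\|v_r\| = 1$ gives
\[
	\|\lm u_r R_0(\nu) v_r\| \le \lm \|w_r-1\|_{L^\infty}\,|\Im\nu|^{-1} < 1,
\]
where the strict inequality is exactly the hypothesis $|\Im\nu| > \lm\|w_r-1\|_{L^\infty}$. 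Hence $1 - \lm u_r R_0(\nu) v_r$ is boundedly invertible via its Neumann series, that is, $1 \in \rho(\lm u_r R_0(\nu) v_r)$.

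Next I would denote by $R(\nu)$ the right-hand side of~\eqref{eq:resolvent_identity} and abbreviate $B := \lm u_r R_0(\nu) v_r$. Because $R_0(\nu)$ maps $L^2(\dR^2)$ into $H^2(\dR^2)$ and the second summand of $R(\nu)$ is again $R_0(\nu)$ applied to an $L^2$-function, one has $\ran R(\nu) \subset H^2(\dR^2) = \dom \H_{r,\lm}$, so that $(\H_{r,\lm}-\nu)R(\nu)$ is well defined. I would then verify $(\H_{r,\lm}-\nu)R(\nu) = \mathrm{Id}$ directly. Writing the multiplication operator by $w_r-1$ as $v_r u_r$, one has $\H_{r,\lm}-\nu = R_0(\nu)^{-1} - \lm v_r u_r$ on $H^2(\dR^2)$. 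Expanding the product and using $\lm u_r R_0(\nu) v_r = B$ to collect the two terms ending in $u_r R_0(\nu)$ yields
\[
	(\H_{r,\lm}-\nu)R(\nu) = \mathrm{Id} + \lm v_r\big[(1-B)^{-1} - \mathrm{Id} - B(1-B)^{-1}\big]u_r R_0(\nu),
\]
and the bracketed operator vanishes since $(1-B)(1-B)^{-1} = \mathrm{Id}$.

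Finally, since $\nu \in \dC\sm\dR \subset \rho(\H_{r,\lm})$ by self-adjointness of $\H_{r,\lm}$, the operator $\H_{r,\lm}-\nu$ is a bijection of $\dom\H_{r,\lm}$ onto $L^2(\dR^2)$ with bounded inverse. Multiplying the identity $(\H_{r,\lm}-\nu)R(\nu) = \mathrm{Id}$ on the left by $(\H_{r,\lm}-\nu)^{-1}$ then forces $R(\nu) = (\H_{r,\lm}-\nu)^{-1}$, which is the asserted formula.

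I expect the algebra to be essentially routine; the only point demanding care is the bookkeeping caused by the fact that $u_r$ and $v_r$ act between the two different spaces $L^2(\dR^2)$ and $L^2(\Omega_r)$. One must apply the factorization $(w_r-1) = v_r u_r$ and the cancellation $(1-B)^{-1} - B(1-B)^{-1} = \mathrm{Id}$ in the correct order on the correct spaces, and make sure the Neumann-series inverse $(1-B)^{-1}$ is understood as a bounded operator on $L^2(\Omega_r)$ before it is sandwiched between $\lm R_0(\nu) v_r$ and $u_r R_0(\nu)$.
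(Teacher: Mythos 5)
Your proposal is correct and follows essentially the same route as the paper: both establish $\lm\|u_r R_0(\nu)v_r\|<1$ from the resolvent bound $\|R_0(\nu)\|\le|\Im\nu|^{-1}$ together with $\|u_r\|\,\|v_r\|=\|w_r-1\|_{L^\infty}$, then verify by the same algebraic cancellation (using $w_r-1=v_ru_r$) that the right-hand side is a right inverse of $\H_{r,\lm}-\nu$, and conclude via $\nu\in\rho(\H_{r,\lm})$. Your explicit remark that $\ran R(\nu)\subset H^2(\dR^2)=\dom\H_{r,\lm}$ is a small point the paper leaves implicit, but otherwise the two arguments coincide.
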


\begin{proof}
	Note that $\| u_r \| \cdot \| v_r \| = \|w_r - 1\|_{L^\infty}$.
	Thus, by our assumptions on $\nu$ and by the spectral theorem we obtain that $\lm\| u_r R_0(\nu) v_r \| < 1$. 
	Hence, the operator
	\begin{equation*}
		T(\nu) := R_0(\nu) + \lm R_0(\nu) v_r \big( 1 - \lm u_r R_0(\nu) v_r \big)^{-1} u_r R_0(\nu)
	\end{equation*}
	is bounded and everywhere defined in $L^2(\dR^2)$. 
	Moreover, thanks to $\lm (w_r-1) = \lm v_r u_r$ we get for any $f\in L^2(\dR^2)$ that
	\begin{equation*}
	\begin{split}
		\big(\H_{r,\lm} - \nu\big)T(\nu) f 
			& =  \big(-\Delta - \nu - \lm v_r u_r \big)T(\nu) f \\
			& = f + \lm v_r \big( 1 - \lm u_{r} R_0(\nu) v_r \big)^{-1}u_r R_0(\nu) f - \lm v_r u_r R_0(\nu) f \\
	    	& \quad \quad - \lm v_r \big(1 - 1 + \lm u_r R_0(\nu) v_r\big)\big( 1 - \lm u_r R_0(\nu) v_r \big)^{-1}u_r R_0(\nu)f\\
			& = f + \lm v_r \big( 1 - \lm u_r R_0(\nu) v_r \big)^{-1}u_r R_0(\nu) f - \lm v_r u_r R_0(\nu) f \\
			& \quad \quad - \lm v_r \big( 1 - \lm u_r R_0(\nu) v_r \big)^{-1} u_r R_0(\nu) f + \lm v_r u_r R_0(\nu) f = f.
	\end{split}
	\end{equation*}
	Since $\nu \in \dC \sm \dR \subset \rho(\H_{r,\lm})$,  we obtain the resolvent
	identity in~\eqref{eq:resolvent_identity}.
\end{proof}
In order to rewrite the resolvent formula~\eqref{eq:resolvent_identity} in a way which is convenient 
to study its convergence, we set $\scH := \bigoplus_{\yy \in Y + \Lambda} L^2(\Omega)$ and
define the function
\begin{equation*}
	\mu\colon \dR_+ \times (Y + \Lambda) \arr \dR_+,
	\qquad 
	\mu(r,\yy) = \mu_n\big( |\ln r|^{-1} \big) \qquad \text{for}~~\yy\in \yy^{(n)} + \Lambda.
\end{equation*}
Furthermore, for $\nu \in \dC \sm \dR$ we define the operators
$A_r(\nu)\colon \scH \arr L^2(\dR^2)$, $E_r(\nu)\colon L^2(\dR^2)\arr \scH$ by
\begin{subequations}\label{eq:AE}
\begin{align}
	A_r(\nu) \Xi 
	&  := 
	\sum_{\yy \in Y + \Lambda} \int_\Omega G_\nu(\cdot - \yy - r \zz) [\Xi]_\yy(\zz) \dd \zz,\\[0.3ex]
    [E_r(\nu) f]_\yy &:= \int_{\dR^2} G_\nu(r \cdot - \zz + \yy) f(\zz) \dd \zz,
  \end{align}
\end{subequations}
and $B_r(\nu), C_r(\nu), D_r(\nu)\colon \scH \arr \scH$ by 
\begin{subequations}\label{eq:BCD}
	\begin{align}
	[B_r(\nu) \Xi]_\yy 
	& := 
	\mu(r,\yy)\int_\Omega G_\nu(r(\cdot - \zz))[\Xi]_\yy(\zz) \dd \zz, \\[0.3ex]
	[C_r(\nu) \Xi]_\yy 
	& := 
	\sum_{\yy_1 \in (Y + \Lambda) \sm \{\yy\} } \int_\Omega G_\nu(r(\cdot - \zz) + \yy - \yy_1) [\Xi]_{\yy_1}(\zz) \dd \zz, \\[0.3ex]
	[D_r(\nu) \Xi]_\yy & := \mu(r, \yy) [\Xi]_\yy.
\end{align}
\end{subequations}
In the above formulae, $[\Xi]_\yy$ denotes the component of 
$\Xi \in \bigoplus_{\yy \in Y + \Lambda} \cH_\yy$ belonging to $\cH_\yy$,
where $\cH_\yy$ are separable Hilbert spaces. 
To analyse the properties of the operators in~\eqref{eq:AE} and~\eqref{eq:BCD} we require 
several auxiliary unitary mappings: 
the identification mapping $I_r\colon\bigoplus_{\yy \in Y + \Lambda} L^2(\yy + r\Omega) \arr  L^2(\Omega_r)$,
the translation operator  
$T_r\colon \bigoplus_{\yy \in Y + \Lambda} L^2(r \Omega) \arr \bigoplus_{\yy \in Y + \Lambda} L^2(\yy + r \Omega)$,
and the scaling transformation $S_r\colon \scH \arr \bigoplus_{\yy \in Y + \Lambda} L^2(r \Omega)$
defined by
\begin{equation*}
\begin{split}
	(I_r \Xi)(\xx)      &:= [\Xi]_\yy(\xx)\quad (\xx \in \yy + r \Omega, ~\yy \in Y + \Lambda),\\
	[T_r \Xi]_\yy(\xx)  &:= [\Xi]_\yy \left(\xx-\yy\right) \quad (\xx \in \yy + r \Omega), 
	\qquad 
	[S_r \Xi]_\yy (\xx) := \frac{1}{r} \left[ \Xi \right]_\yy \left( \frac{\xx}{r} \right) \quad (\xx \in r \Omega).
\end{split}
\end{equation*}
Note that the inverses of these mappings act as
\begin{equation*} 
\begin{split}
	\big[I_r^{-1} f\big]_\yy(\xx)   &= f(\xx) \quad (\xx \in \yy + r \Omega), \\
	\big[T_r^{-1} \Xi\big]_\yy(\xx) &= [\Xi]_\yy\left(\xx + \yy\right) \quad (\xx \in r \Omega),
	\qquad 
	\big[S_r^{-1} \Xi\big]_\yy(\xx) = r [\Xi]_\yy( r \xx) \quad (\xx \in \Omega).
\end{split}
\end{equation*}
It will also be convenient to define the product
\begin{equation*}
	J_r := r^{-1} I_r T_r S_r.
\end{equation*}
In the following lemma we state some of the basic properties 
of the operators~$A_r(\nu),$ $B_r(\nu), C_r(\nu),$ $D_r(\nu)$ and $E_r(\nu)$.
\begin{lem} \label{lemma_ABCDE_r}
	Let $r > 0$ be sufficiently small and let $\nu \in \dC \sm \dR$.
	Then the following identities are true:
	\begin{equation*}
		A_r(\nu) =  R_0(\nu) v_r J_r,
		\qquad
		B_r(\nu) + D_r(\nu) C_r(\nu)  = 
		J_r^{-1} u_r R_0(\nu) v_r J_r,
		\qquad
		D_r(\nu) E_r(\nu) = J_r^{-1} u_r R_0(\nu).
	\end{equation*}
	In particular, the operators $A_r(\nu), B_r(\nu), C_r(\nu), D_r(\nu)$ 
	and $E_r(\nu)$ are bounded and everywhere defined.
\end{lem}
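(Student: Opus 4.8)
The plan is to verify each of the three operator identities by computing the action of both sides on an arbitrary element and comparing the resulting integral kernels; boundedness and everywhere-definedness then follow automatically, since the right-hand sides are manifestly compositions of bounded maps. The one preliminary step I would carry out is to make the composite $J_r = r^{-1} I_r T_r S_r$ and its inverse explicit. Composing the scaling $S_r$, the translation $T_r$ and the identification $I_r$ (which is legitimate because, for small $r$, $\Omega_r = \bigcup_{\yy\in Y+\Lambda}(\yy+r\Omega)$ is a disjoint union), and carefully tracking the powers of $r$, one obtains
\[
	(J_r \Xi)(\xx) = r^{-2}[\Xi]_\yy\big(r^{-1}(\xx - \yy)\big),\qquad \xx \in \yy + r\Omega,~\yy\in Y+\Lambda,
\]
with $J_r\Xi$ supported in $\Omega_r$, and dually $[J_r^{-1} f]_\yy(\xx) = r^2 f(\yy + r\xx)$ for $\xx\in\Omega$. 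Since $I_r, T_r, S_r$ are unitary, $J_r$ is $r^{-1}$ times a unitary, so $\|J_r\| = r^{-1}$ and $\|J_r^{-1}\| = r$; this is the only place where the scaling factors enter.

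For the first identity $A_r(\nu) = R_0(\nu) v_r J_r$ I would apply $v_r$ (extension by zero) to $J_r\Xi$ and then convolve with the kernel $G_\nu$ of $R_0(\nu)$. Splitting the integral over $\Omega_r$ into the disjoint cells $\yy + r\Omega$ and substituting $\yy' = \yy + r\zz$ — so that $\dd\yy' = r^2\,\dd\zz$ cancels the factor $r^{-2}$ carried by $J_r$ — reproduces exactly the defining formula for $A_r(\nu)$. The third identity $D_r(\nu) E_r(\nu) = J_r^{-1} u_r R_0(\nu)$ is the same computation in reverse: writing $R_0(\nu)f$ as convolution, noting that on each cell $\yy + r\Omega$ with $\yy\in\yy^{(n)}+\Lambda$ the weight satisfies $w_r - 1 = r^{-2}\mu(r,\yy)$, so that $u_r R_0(\nu) f$ carries the factor $r^{-2}\mu(r,\yy)$, and observing that the factor $r^2$ produced by $J_r^{-1}$ cancels it, leaves precisely $\mu(r,\yy)$ times the scaled, translated kernel $G_\nu(\yy + r\xx - \zz)$, which is $[D_r(\nu)E_r(\nu)f]_\yy$.

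The remaining identity $B_r(\nu) + D_r(\nu) C_r(\nu) = J_r^{-1} u_r R_0(\nu) v_r J_r$ combines both computations. Starting from $A_r(\nu) = R_0(\nu) v_r J_r$, I would apply $u_r$ (multiplication by $r^{-2}\mu(r,\yy_0)$ on the cell $\yy_0 + r\Omega$) and then $J_r^{-1}$ (contributing $r^2$ and rescaling), arriving at an expression of the form
\[
	[J_r^{-1} u_r R_0(\nu) v_r J_r \Xi]_{\yy_0}(\xx) = \mu(r,\yy_0)\sum_{\yy\in Y+\Lambda}\int_\Omega G_\nu\big(r(\xx - \zz) + \yy_0 - \yy\big)[\Xi]_\yy(\zz)\,\dd\zz.
\]
Splitting off the diagonal term $\yy = \yy_0$ yields $[B_r(\nu)\Xi]_{\yy_0}$, while the off-diagonal sum is exactly $\mu(r,\yy_0)\,[C_r(\nu)\Xi]_{\yy_0} = [D_r(\nu)C_r(\nu)\Xi]_{\yy_0}$. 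This diagonal/off-diagonal decomposition is the one genuinely structural step.

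Finally, boundedness and everywhere-definedness of $A_r(\nu), B_r(\nu), C_r(\nu), D_r(\nu), E_r(\nu)$ follow at once: each has now been identified with a composition of the bounded operators $R_0(\nu)$, $u_r$, $v_r$ and $J_r^{\pm1}$ (with $D_r(\nu)$ itself bounded, being block-diagonal multiplication by the bounded sequence $\mu(r,\cdot)$), hence is bounded on all of its domain. I expect the main obstacle to be purely the bookkeeping: ensuring that every power of $r$ cancels correctly across $J_r$, the substitution $\yy' = \yy + r\zz$, and the cellwise value $w_r - 1 = r^{-2}\mu(r,\yy)$. The conceptual content is light, so the genuine risk is a scaling slip in composing the three unitaries defining $J_r$, which is why I would fix their explicit actions once and for all at the outset.
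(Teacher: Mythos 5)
Your verification of the three identities is correct and follows essentially the same route as the paper's own proof: making the actions of $J_r$ and $J_r^{-1}$ explicit, splitting $\Omega_r$ into the disjoint cells $\yy + r\Omega$, performing the substitution $\zz \mapsto \yy + r\zz$ (whose Jacobian $r^2$ cancels the factor $r^{-2}$ carried by $J_r$ and $w_r-1$), and decomposing the full lattice sum into its diagonal part ($B_r$) and off-diagonal part ($D_rC_r$).

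The genuine flaw is in your final paragraph. The identities identify $A_r(\nu)$, the \emph{product} $D_r(\nu)E_r(\nu)$, and the \emph{sum} $B_r(\nu)+D_r(\nu)C_r(\nu)$ with compositions of bounded operators; they do \emph{not} identify $E_r(\nu)$ or $C_r(\nu)$ themselves with such compositions, so for these two operators boundedness does not ``follow at once'' as you claim. Two further observations are needed, and they are exactly how the paper closes the argument: (i) $B_r(\nu)$ is bounded because of its diagonal structure --- all blocks with $\yy\in\yy^{(n)}+\Lambda$ coincide, and each block is an integral operator on $L^2(\Omega)$ whose kernel $\mu(r,\yy)\,G_\nu(r(\xx-\zz))$ is square integrable, since $G_\nu$ has only a logarithmic singularity; and (ii) $D_r(\nu)$ is boundedly \emph{invertible} for all sufficiently small $r>0$, because $\mu(r,\yy)=\mu_n\big(|\ln r|^{-1}\big)$ takes only the $N$ values $\mu_n\big(|\ln r|^{-1}\big)$, which are nonzero (indeed positive) for small $r$ since $\mu_n(x)=\frac{2\pi}{\lm_n|\Omega|}x+c_nx^2$ has positive linear coefficient. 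Only then can one write $E_r(\nu)=D_r(\nu)^{-1}J_r^{-1}u_rR_0(\nu)$ and $C_r(\nu)=D_r(\nu)^{-1}\big(J_r^{-1}u_rR_0(\nu)v_rJ_r-B_r(\nu)\big)$ and conclude that they are bounded and everywhere defined. Note that this invertibility is one of the places where the hypothesis ``$r$ sufficiently small'' in the lemma is actually used (besides the disjointness of the cells), and your argument as written never invokes it in the boundedness step.
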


\begin{proof}
	Let a sufficiently small $r > 0$ and an arbitrary $\nu \in \dC \sm \dR$ be fixed.
	First, we prove the formula $A_r(\nu) =  R_0(\nu) v_r J_r$,
	which automatically implies that $A_r(\nu)$ is bounded and everywhere defined, 
	as the operators $R_0(\nu)$, $v_r$ and $J_r$ separately possess this property. By the definition of $T_r$ and $S_r$
	it follows for $\Xi \in \scH$, $\yy \in Y + \Lambda$, and $\zz \in \yy + r \Omega$ that
	\begin{equation*}
	  	\frac{1}{r} [\Xi]_\yy\left( \frac{\zz - \yy}{r} \right) 
		= 
		[S_r \Xi]_\yy\left( \zz - \yy \right)  = [T_r S_r \Xi]_\yy(\zz).
	\end{equation*}
	Hence, we conclude
	\begin{equation*} 
		\big(R_0(\nu) v_r J_r \Xi\big) (\xx)
		=  
		\sum_{\yy \in Y + \Lambda} \int_{\yy+r\Omega} G_\nu(\xx - \zz)
			\frac{1}{r^2} [\Xi]_\yy\left( \frac{\zz - \yy}{r} \right) \dd \zz. 
	\end{equation*}
	Employing now in each single integral in the above sum a translation 
	$\zeta := \zz - \yy$ and a transformation 
	$\xi := \frac{\zeta}{r}$, we end up with
	\begin{equation} \label{label_operators1}
	\begin{split}
		\big(R_0(\nu) v_r J_r \Xi\big) (\xx)
		&=
		\sum_{\yy \in Y + \Lambda}\int_{\yy + r \Omega} G_\nu(\xx-\zz)\frac{1}{r^2}[\Xi]_\yy\left( \frac{\zz - \yy}{r}\right)\dd \zz \\
		&=
		\sum_{\yy \in Y + \Lambda}\int_{r \Omega} G_\nu(\xx-\zeta-\yy)\frac{1}{r^2} [\Xi]_\yy\left(\frac{\zeta}{r} \right)\dd \zeta \\
		&=
		\sum_{\yy \in Y + \Lambda} \int_\Omega  G_\nu(\xx - r \xi - \yy)  [\Xi]_\yy\left( \xi \right) \dd \xi 
		= 
		(A_r(\nu) \Xi) (\xx).
	\end{split}
	\end{equation}
	Next, we show the identity $D_r(\nu) E_r(\nu) = J_r^{-1} u_r R_0(\nu)$. 
	Indeed, using the definitions of the operators $I_r$, $T_r$, $S_r$, $J_r$ and $u_r$  
	we get for $f\in L^2(\dR^2)$, $\xx \in \Omega$ and $\yy \in Y + \Lambda$
	\begin{equation} \label{label_operators2}
	\begin{split}
		\big[J_r^{-1} u_r R_0(\nu) f \big]_\yy(\xx) 
		&= r^2 \left[T_r^{-1} I_r^{-1} u_{r} R_0(\nu) f \right]_\yy\left(r \xx \right) 
			= r^2 \left[I_r^{-1} u_r R_0(\nu) f \right]_\yy\left(r \xx + \yy\right) \\
		&= \mu(r,\yy)\int_{\dR^2}  G_\nu\big(r \xx + \yy - \zz \big) f(\zz) \dd \zz 
			= [D_r(\nu) E_r(\nu) f]_\yy(\xx).
	\end{split}
	\end{equation}
	Clearly, the operator $D_r(\nu)$ is bounded and everywhere defined. 
	Moreover, since $J_r^{-1}$, $u_r$ and $R_0(\nu)$ 
	are bounded and everywhere defined as well and since 
	$D_r(\nu)$ is boundedly invertible for all sufficiently small $r > 0$, it follows that also $E_r(\nu)$ is bounded.
	
	It remains to prove the identity 
	$B_r(\nu) + D_r(\nu) C_r(\nu)  =  J_r^{-1} u_r R_0(\nu) v_r J_r$.
	As in~\eqref{label_operators2} and~\eqref{label_operators1} we get
	for $\Xi \in \scH$, $\xx \in \Omega$, and $\yy \in Y + \Lambda$ 
	\begin{equation*}
	\begin{split}
		\big[J_r^{-1}  u_r R_0(\nu) v_r J_r\Xi\big]_\yy(\xx) 
		&= 
		\sum_{\yy_1 \in  Y + \Lambda}\mu(r,\yy) \int_\Omega G_\nu(r( \xx - \zz)+ \yy- \yy_1) [\Xi]_{\yy_1}(\zz) \dd \zz \\
		&= \big[\big( B_r(\nu) + D_r(\nu) C_r(\nu) \big) \Xi \big]_\yy(\xx).
	\end{split}
	\end{equation*}
	Finally, since $B_r(\nu)$ and $D_r(\nu)$ are both obviously 
	bounded and everywhere defined 
	due to their diagonal structure and since
	$D_r(\nu)$ is also boundedly invertible for sufficiently small $r$, 
	it follows that $C_r(\nu)$ is also a bounded operator. 
	Thus, the proof of the lemma is complete.
\end{proof}
After all these preparations it is not difficult to transform 
the resolvent formula for $\H_{r,\lambda}$ 
from Proposition~\ref{prop:resolvent_formula_1} 
into another one, which is more convenient for the 
investigation of its convergence. For this purpose, we define 
for $\lm \geq 0$ and $\nu \in \dC \sm \dR$ 
the operator
\begin{equation} \label{def_F}
	F_r(\nu,\lm) := \lm \big(1 - \lm B_r(\nu)\big)^{-1}D_r(\nu).
\end{equation}
Note that $F_r(\nu, \lambda)$ is well defined, as it is known from the one-center case 
that each component of the diagonal operator $1 - \lm B_r(\nu)$
is boundedly invertible; see~\cite[eq.~(5.49) in Chap.~I.5]{albeverio_gesztesy_hoegh-krohn_holden}. 
Hence, thanks to its diagonal structure it is clear that also $(1 - \lm B_r(\nu))^{-1}$
exists as a bounded and everywhere defined operator. 
\begin{thm} \label{theorem_resolvent_formula}
	Let $\lm \geq 0$, $r > 0$, and let $\H_{r,\lm}$ be defined as in~\eqref{def_H}.
	Let $A_r(\nu), E_r(\nu)$ be as in~\eqref{eq:AE}, let
	$B_r(\nu)$, $C_r(\nu), D_r(\nu)$ be as in~\eqref{eq:BCD} 
	and let $F_r(\nu,\lm)$ be given by~\eqref{def_F}.
	Then, for any $\nu \in \dC \sm \dR$ with $\| F_r(\nu,\lm) C_r(\nu) \| < 1$ it holds 
	\begin{equation*}
		(\H_{r,\lm} - \nu)^{-1} 
		= 
		R_0(\nu)  + A_r(\nu) \big[1 -  F_r(\nu,\lm) C_r(\nu) \big]^{-1} F_r(\nu,\lm) E_r(\nu).
	\end{equation*}
\end{thm}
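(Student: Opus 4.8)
The plan is to start from the resolvent identity~\eqref{eq:resolvent_identity} of Proposition~\ref{prop:resolvent_formula_1} and to rewrite its perturbation term $\lm R_0(\nu) v_r(1 - \lm u_r R_0(\nu) v_r)^{-1} u_r R_0(\nu)$ entirely in terms of the operators $A_r(\nu), \dots, E_r(\nu)$ by means of the three identities of Lemma~\ref{lemma_ABCDE_r}. The algebraic heart of the matter is the factorization
\begin{equation*}
	1 - \lm\big(B_r(\nu) + D_r(\nu) C_r(\nu)\big) = \big(1 - \lm B_r(\nu)\big)\big(1 - F_r(\nu,\lm) C_r(\nu)\big),
\end{equation*}
which holds because $B_r(\nu)$ and $D_r(\nu)$ are diagonal and hence commute, so that $(1 - \lm B_r(\nu))^{-1}\lm D_r(\nu) = F_r(\nu,\lm)$ by the very definition~\eqref{def_F} of $F_r(\nu,\lm)$.

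First I would settle the invertibility issues. The operator $J_r = r^{-1} I_r T_r S_r$ is a bounded bijection with bounded inverse (an $r^{-1}$-multiple of a unitary), and Lemma~\ref{lemma_ABCDE_r} gives $J_r^{-1}(u_r R_0(\nu) v_r) J_r = B_r(\nu) + D_r(\nu) C_r(\nu)$; thus $1 - \lm u_r R_0(\nu) v_r$ and $1 - \lm(B_r(\nu) + D_r(\nu) C_r(\nu))$ are similar. Since $1 - \lm B_r(\nu)$ is always boundedly invertible (as recalled after~\eqref{def_F}) and the hypothesis $\|F_r(\nu,\lm) C_r(\nu)\| < 1$ makes $1 - F_r(\nu,\lm)C_r(\nu)$ invertible via a Neumann series, the factorization above shows that $1 - \lm(B_r(\nu) + D_r(\nu) C_r(\nu))$, and therefore $1 - \lm u_r R_0(\nu) v_r$, is boundedly invertible.

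With invertibility in hand the substitution is routine: writing $(1 - \lm u_r R_0(\nu) v_r)^{-1} = J_r(1 - \lm(B_r(\nu) + D_r(\nu) C_r(\nu)))^{-1} J_r^{-1}$ and inserting the Lemma identities $R_0(\nu) v_r J_r = A_r(\nu)$ and $J_r^{-1} u_r R_0(\nu) = D_r(\nu) E_r(\nu)$, the perturbation term becomes $\lm A_r(\nu)(1 - \lm(B_r(\nu) + D_r(\nu) C_r(\nu)))^{-1} D_r(\nu) E_r(\nu)$. Applying the factorization and moving the scalar $\lm$ next to $(1 - \lm B_r(\nu))^{-1} D_r(\nu)$ to recognize $F_r(\nu,\lm)$, this collapses to $A_r(\nu)(1 - F_r(\nu,\lm) C_r(\nu))^{-1} F_r(\nu,\lm) E_r(\nu)$, which is exactly the claimed formula; the right-hand side is well defined since $A_r(\nu), E_r(\nu), F_r(\nu,\lm)$ are bounded and $(1 - F_r(\nu,\lm) C_r(\nu))^{-1}$ exists by assumption.

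The one genuine obstacle is that Proposition~\ref{prop:resolvent_formula_1} is stated under the stronger hypothesis $|\Im\nu| > \lm\|w_r - 1\|_{L^\infty}$, whereas here I only assume $\|F_r(\nu,\lm) C_r(\nu)\| < 1$. I would handle this by observing that the computation in the proof of Proposition~\ref{prop:resolvent_formula_1} establishes $(\H_{r,\lm} - \nu) T(\nu) = 1$ for the operator $T(\nu)$ appearing on the right-hand side of~\eqref{eq:resolvent_identity} as soon as that $T(\nu)$ is well defined, i.e.\ as soon as $1 - \lm u_r R_0(\nu) v_r$ is boundedly invertible; the stronger hypothesis there serves only to guarantee this invertibility via $\lm\|u_r R_0(\nu) v_r\| < 1$. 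Since $\nu \in \dC\sm\dR \subset \rho(\H_{r,\lm})$ for the self-adjoint operator $\H_{r,\lm}$, the relation $(\H_{r,\lm} - \nu) T(\nu) = 1$ combined with the invertibility established above yields $T(\nu) = (\H_{r,\lm} - \nu)^{-1}$, and the chain of rewritings then gives the assertion. Alternatively, one could fix $r$ and invoke the analyticity in $\nu$ of both sides together with their agreement for large $|\Im\nu|$.
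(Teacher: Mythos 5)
Your proof is correct, and its algebraic core is the same as the paper's: the factorization $1-\lm\big(B_r(\nu)+D_r(\nu)C_r(\nu)\big)=\big(1-\lm B_r(\nu)\big)\big(1-F_r(\nu,\lm)C_r(\nu)\big)$ combined with the three conjugation identities of Lemma~\ref{lemma_ABCDE_r}. (One small simplification: you do not need commutativity of $B_r(\nu)$ and $D_r(\nu)$ for this factorization; it follows purely from the definition~\eqref{def_F}, since $\big(1-\lm B_r(\nu)\big)F_r(\nu,\lm)=\lm D_r(\nu)$.) Where you genuinely depart from the paper is in bridging the gap between the hypothesis of Proposition~\ref{prop:resolvent_formula_1}, namely $|\Im\nu|>\lm\|w_r-1\|_{L^\infty}$, and the weaker hypothesis $\|F_r(\nu,\lm)C_r(\nu)\|<1$ of the theorem. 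The paper proves the formula only for $|\Im\nu|$ large and then disposes of the general case with the single phrase ``by analytic continuation.'' You instead observe that the large-$|\Im\nu|$ assumption in Proposition~\ref{prop:resolvent_formula_1} is used solely to invert $1-\lm u_r R_0(\nu)v_r$ by a Neumann series, and that under the theorem's hypothesis this invertibility follows anyway from the similarity $J_r^{-1}\big(u_r R_0(\nu)v_r\big)J_r=B_r(\nu)+D_r(\nu)C_r(\nu)$ together with the factorization; the identity $(\H_{r,\lm}-\nu)T(\nu)=1$ then holds verbatim, and $\nu\in\rho(\H_{r,\lm})$ upgrades the right inverse to the resolvent. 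This is more than a cosmetic difference: the paper's continuation argument implicitly requires that the given $\nu$ lie in the same connected component of the set where the right-hand side is analytic (i.e.\ where $1-F_r(\nu,\lm)C_r(\nu)$ is boundedly invertible) as the region of large $|\Im\nu|$, and this connectedness is not addressed there. Your direct argument proves the identity pointwise at every admissible $\nu$ and so avoids that subtlety entirely, at the cost of a slightly longer write-up; the paper's route is shorter on the page but, as stated, less complete.
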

\begin{proof}
	Let $w_r$ be as in~\eqref{def_eps} and the operators $u_r$, $v_r$ be as in~\eqref{def_u},~\eqref{def_v}.
	Choose now a non-real number $\nu$ such that additionally $|\Im  \nu| > \|w_r - 1\|_{L^\infty}$.
	A simple computation shows now
	\begin{equation*}
	\begin{split}
		\big[ 1 -\lm( B_r(\nu) + D_r(\nu) C_r(\nu)) \big]^{-1}  
		&=
		\big[(1 - \lm B_r(\nu))(1 - F_r(\nu,\lm)C_r(\nu))\big]^{-1} \\
		&=
		\big[1 - F_r(\nu,\lm) C_r(\nu)\big]^{-1}(1 - \lm B_r(\nu))^{-1}.
	\end{split}
	\end{equation*}
	Hence, it holds by Proposition~\ref{prop:resolvent_formula_1}
	and Lemma~\ref{lemma_ABCDE_r} 
	\begin{equation*} 
	\begin{split}
		(\H_{r,\lm} - \nu)^{-1} 
		&= 
		R_0(\nu) + \lm R_0(\nu) v_r \big( 1 - \lm u_r R_0(\nu) v_r \big)^{-1} u_r R_0(\nu) \\
		&= 
		R_0(\nu)  + \lm A_r(\nu) J_r^{-1}
			\left[ 1 - \lm J_r\left(B_r(\nu) + D_r(\nu) E_r(\nu)\right)J_r^{-1}\right]^{-1} J_r D_r(\nu) E_r(\nu) \\
		& = 
		R_0(\nu) +\lm A_r(\nu) 
			\left[ 1 - \lm (B_r(\nu) + D_r(\nu) E_r(\nu)) \right]^{-1}D_r(\nu) E_r(\nu)\\
		&= R_0(\nu)  + A_r(\nu) \big[1 -  F_r(\nu,\lm) C_r(\nu) \big]^{-1} F_r(\nu,\lm) E_r(\nu).
	\end{split}
	\end{equation*}
	For general $\nu \in \dC \sm \dR$ with $\| F_r(\nu,\lm) C_r(\nu) \| < 1$
	the statement follows by analytic continuation.
\end{proof}
Now we have all the tools to analyse the convergence of $\H_{r,\lm}$
in the norm resolvent sense.
For this purpose, it is sufficient to compute the limits of the operators 
$A_r(\nu), C_r(\nu), E_r(\nu)$ and $F_r(\nu,\lm)$ separately.
The obvious candidates for the limits of $A_r(\nu), C_r(\nu)$ and $E_r(\nu)$, as $r \arr 0+$,
are given by $A_0(\nu), C_0(\nu)$, and $E_0(\nu)$ that are  defined 
as in~\eqref{eq:AE} and~\eqref{eq:BCD} with $r = 0$. 
The convergence of $F_r(\nu,\lm)$ is more subtle, as
$G_\nu(0)$ is not defined. 
The known analysis of the convergence in the one-center case
\cite[Chap.~I.5]{albeverio_gesztesy_hoegh-krohn_holden} suggests the following limit operator:
\begin{equation} \label{def_F0}
    F(\nu,\lm)\colon \scH \arr \scH, 
    \qquad
    \big[F(\nu,\lm) \Xi]_\yy(\xx) 
    := 
    \frac{q(\yy,\nu,\lm)}{|\Omega|^{2}} \langle [\Xi]_\yy \rangle_\Omega,
\end{equation}
where $\langle f \rangle_\Omega = \int_\Omega f \dd \xx$ and $q(\yy,\nu,\lm)$ is given by
\begin{equation*}
	q(\yy,\nu,\lm) = 
	\begin{cases}
		2\pi \Big\{\ln \frac{\sqrt{\nu}}{2\ii}  - \gamma + 2\pi \aa_n \Big\}^{-1}, & 
		\lm = \lm_n, \yy\in \yy^{(n)} + Y,~n\in \{1,\dots,N\},\\
		0, & \text{else,} 
	\end{cases}
\end{equation*}
with $\aa_n$ as in~\eqref{eq:aa_C}. 
Before going further with the proof of the convergence of $(\H_{r,\lm} - \nu)^{-1}$,
we recall the asymptotics of the integral kernel $G_\nu(\xx-\yy)$ of $R_0(\nu)$.
In a way similar to~\cite[Prop. A.1]{BEHL16}, one can prove the following claim.
\begin{lem} \label{proposition_asymptotics_G_lambda}
	Let $\nu \in \dC \sm \dR$
	and let $G_\nu(x) =  \frac{\ii}{4} H^{(1)}_0\big(\sqrt{\nu} |x|\big)$
	be as in \eqref{def_G_lambda}.
	Then, there exist constants $\rho = \rho(\nu) > 0$,
	$\kp = \kp(\nu) > 0$, $K = K(\nu) > 0$, $\kp' = \kp'(\nu)  > 0$
	and $K' = K'(\nu) > 0$ such that
	\begin{equation*}
	    \big| G_\nu(\xx) \big| 
	    \leq  
	    \begin{cases}
		    \kp \big( 1 + \big| \ln |\xx| \big| \big), &|\xx| \leq \rho,\\
		    K e^{-\Im\sqrt{\nu} |\xx|},				   &|\xx| \geq \rho,
	    \end{cases}
	    \qquad
	    \big| \nabla G_\nu(\xx) \big| 
	    \leq 
	    \begin{cases}
		    \kp' |\xx|^{-1},               &  |\xx| \leq \rho,\\
		     K'  e^{-\Im\sqrt{\nu} |\xx|}, &  |\xx| \geq \rho.
	    \end{cases} 
	\end{equation*}
	In particular, $G_\nu$ and $\nabla G_\nu$ are integrable functions.
\end{lem}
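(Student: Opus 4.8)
The plan is to reduce everything to the classical small- and large-argument asymptotics of the Hankel functions $H_0^{(1)}$ and $H_1^{(1)}$, which can be found in \cite[Chap.~9]{abramowitz_stegun}. First I would record how the gradient reduces to a single Hankel function: by the chain rule and the Bessel identity $\frac{\rmd}{\rmd z}H_0^{(1)}(z) = -H_1^{(1)}(z)$ one obtains
\[
  \nabla G_\nu(\xx) = -\frac{\ii}{4}\,\sqrt{\nu}\,\frac{\xx}{|\xx|}\,H_1^{(1)}\big(\sqrt{\nu}\,|\xx|\big),
  \qquad \xx \neq \bfo,
\]
so that $|G_\nu(\xx)|$ and $|\nabla G_\nu(\xx)|$ are, up to the fixed factors $\tfrac14$ and $\tfrac{|\sqrt{\nu}|}{4}$, governed by $|H_0^{(1)}(z)|$ and $|H_1^{(1)}(z)|$ with $z = \sqrt{\nu}\,|\xx|$. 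Since $\nu\in\dC\sm\dR$ is fixed and we use the branch with $\Im\sqrt{\nu}>0$, we have $|z| = |\sqrt{\nu}|\,|\xx|$ and $\Im z = \Im\sqrt{\nu}\,|\xx|$.

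For the near-diagonal regime I would use the expansions about $z=0$: writing $H_0^{(1)}(z) = J_0(z) + \ii Y_0(z)$ with $J_0$ bounded and $Y_0(z) = \tfrac{2}{\pi}\big(\ln\tfrac{z}{2}+\gamma\big)J_0(z) + (\text{entire})$ yields $|H_0^{(1)}(z)| \le \kp\,(1+|\ln|z||)$ for $|z|$ small, and likewise $H_1^{(1)}(z) = -\tfrac{2\ii}{\pi z} + O(|z|\,|\ln|z||)$ yields $|H_1^{(1)}(z)| \le \kp'\,|z|^{-1}$. Writing $\ln|z| = \ln|\sqrt{\nu}| + \ln|\xx|$ and $|z|^{-1} = |\sqrt{\nu}|^{-1}|\xx|^{-1}$, the fixed constant $\ln|\sqrt{\nu}|$ is absorbed into $1+|\ln|\xx||$ and the factor $|\sqrt{\nu}|^{-1}$ into $\kp'$, giving the two claimed bounds for $|\xx|$ below some small radius.

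For the far regime I would invoke the standard asymptotics $H_m^{(1)}(z) = \sqrt{\tfrac{2}{\pi z}}\,e^{\ii(z - m\pi/2 - \pi/4)}\big(1 + O(|z|^{-1})\big)$, $m=0,1$, valid for $|z|$ bounded away from $0$. Here $|e^{\ii z}| = e^{-\Im z} = e^{-\Im\sqrt{\nu}\,|\xx|}$, the prefactor $|z|^{-1/2}$ is bounded once $|\xx|$ is bounded below, and the remainder is bounded; this produces $|G_\nu(\xx)| \le K\,e^{-\Im\sqrt{\nu}|\xx|}$ and $|\nabla G_\nu(\xx)| \le K'\,e^{-\Im\sqrt{\nu}|\xx|}$ for all large $|\xx|$. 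To obtain a single threshold $\rho = \rho(\nu)$ separating the two estimates, I would fix $\rho$, use the small-argument bounds for $|\xx|\le\rho$ and the large-argument bounds for $|\xx|\ge\rho$, and absorb the values on the intermediate compact annulus, where $G_\nu$ and $\nabla G_\nu$ are continuous and hence bounded, into the constants $\kp,\kp',K,K'$ by enlarging them (using $1+|\ln|\xx||\ge 1$, $|\xx|^{-1}\ge\rho^{-1}$, and that $e^{-\Im\sqrt{\nu}|\xx|}$ is bounded below on a compact set).

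Finally, integrability is immediate: near $\bfo$ the bounds $1+|\ln|\xx||$ and $|\xx|^{-1}$ are integrable against the two-dimensional area element, and at infinity the factor $e^{-\Im\sqrt{\nu}|\xx|}$ with $\Im\sqrt{\nu}>0$ guarantees integrability of both $G_\nu$ and $\nabla G_\nu$. I expect the only delicate point to be the gradient estimate near the origin: it requires the exact order of the pole of $H_1^{(1)}$ at $0$, together with the care that the subleading logarithmic term does not spoil the pure $|\xx|^{-1}$ bound and that all constants remain independent of $\xx$, depending only on the fixed $\nu$. The matching across the intermediate annulus is routine bookkeeping.
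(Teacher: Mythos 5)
Your argument is correct and is essentially the proof the paper has in mind: the paper does not spell it out but defers to \cite[Prop.~A.1]{BEHL16}, which proceeds exactly as you do, via the identity $\frac{\rmd}{\rmd z}H_0^{(1)}(z)=-H_1^{(1)}(z)$ together with the classical small-argument (logarithmic singularity, simple pole) and large-argument (oscillatory-decaying) asymptotics of $H_0^{(1)}$ and $H_1^{(1)}$ from \cite[Chap.~9]{abramowitz_stegun}, followed by absorbing the intermediate compact annulus into the constants. Your handling of the branch (using $\Im\sqrt{\nu}>0$ so that $\arg z$ stays in the validity sector of the asymptotic expansion) and of the integrability conclusion is likewise the standard one, so nothing further is needed.
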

Now, we are prepared to investigate the convergence of 
$A_r(\nu), C_r(\nu), E_r(\nu)$, and $F_r(\nu,\lm)$, as $r\arr 0+$.
\begin{lem} \label{lemma_convergence}
	Let $\nu \in \dC \sm \dR$, 
	let the operators $A_r(\nu)$, $C_r(\nu)$, $E_r(\nu)$ be defined as in~\eqref{eq:AE} 
	and~\eqref{eq:BCD}. Let the operators $F_r(\nu,\lm)$ and $F(\nu,\lm)$ be as in~\eqref{def_F}
	and~\eqref{def_F0}, respectively.
	Then there exists a constant $M = M(\nu, \lm) > 0$  such that
	\begin{equation*}
	\begin{split}
		&\big\| A_r(\nu) - A_0(\nu) \big\| \leq M r^{1/4}, 
		\qquad 
	   \big\| C_r(\nu) - C_0(\nu) \big\| \leq M r, \\
		&\big\| E_r(\nu) - E_0(\nu) \big\| \leq M r^{1/4}, 
		\qquad     
		\big\| F_r(\nu,\lm) - F(\nu, \lm) \big\| \leq M |\ln r|^{-1},
	\end{split}
	\end{equation*}
	for all sufficiently small $r > 0$.
\end{lem}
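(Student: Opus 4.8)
The plan is to treat the five operators separately, exploiting that $B_r(\nu)$, $D_r(\nu)$ and hence $F_r(\nu,\lm)$ are block-diagonal with respect to $\scH=\bigoplus_{\yy\in Y+\Lambda}L^2(\Omega)$, whereas $A_r(\nu)$, $C_r(\nu)$, $E_r(\nu)$ couple distinct blocks only through off-diagonal values of $G_\nu$. The decisive structural input from Lemma~\ref{proposition_asymptotics_G_lambda} is that $G_\nu$ has a logarithmic singularity at the origin but decays exponentially at infinity. The singularity is harmless for $C_r(\nu)$, whose kernels are evaluated near the nonzero lattice vectors $\yy-\yy_1$, and for the coupling of distinct blocks, which is controlled by the exponential tails; it is, however, the entire source of difficulty in $B_r(\nu)$ and hence in $F_r(\nu,\lm)$.

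For $C_r(\nu)-C_0(\nu)$ I would Taylor expand: since $\yy\neq\yy_1$ keeps $|\yy-\yy_1|$ bounded below by the lattice spacing, the mean value theorem gives the pointwise bound $|G_\nu(r(\xx-\zz)+\yy-\yy_1)-G_\nu(\yy-\yy_1)|\le r\,|\xx-\zz|\sup_{|t|\le 1}|\nabla G_\nu((\yy-\yy_1)+tr(\xx-\zz))|$, and summation over $\yy_1$ converges uniformly in the block index $\yy$ by the exponential decay of $\nabla G_\nu$; this yields $\|C_r(\nu)-C_0(\nu)\|\le Mr$. For $A_r(\nu)-A_0(\nu)$ and $E_r(\nu)-E_0(\nu)$ the difference is governed by the $L^2(\dR^2)$-modulus of continuity $\sup_{\zz\in\Omega}\|G_\nu(\cdot-r\zz)-G_\nu\|_{L^2(\dR^2)}$ (the two operators being symmetric to one another, up to the conjugation $\nu\mapsto\ov\nu$, so one estimate serves both). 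I would bound this by splitting $\dR^2$ into a small disc about the singularity, where $G_\nu\in L^2$ and the logarithmic bound is used, and its complement, where the fundamental theorem of calculus combines with $\nabla G_\nu\in L^2(\{|\xx|\ge\delta\})$; optimising the radius $\delta$ gives order $r^{1/2}|\ln r|$, in particular $\le Mr^{1/4}$. The infinite sum over $\yy\in Y+\Lambda$ is absorbed by an almost-orthogonality (Schur) estimate resting on the exponential tails of $G_\nu$, so the single-block rate passes to the operator norm. The exponent $1/4$ is deliberately generous: any fixed positive power of $r$ suffices, the genuine bottleneck being the logarithmic rate of $F_r$.

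The heart of the matter is $F_r(\nu,\lm)-F(\nu,\lm)$, where a resonance occurs; by block-diagonality it suffices to study one block in $L^2(\Omega)$, uniformly over the finitely many types $n\in\{1,\dots,N\}$. Inserting the small-argument expansion $G_\nu(r\xi)=\tfrac{1}{2\pi}|\ln r|+\big(-\tfrac{1}{2\pi}\ln|\xi|+\kappa_\nu\big)+O(r^2|\ln r|)$ into $B_r(\nu)$ and using $\mu_n(|\ln r|^{-1})=\tfrac{2\pi}{\lm_n|\Omega|}|\ln r|^{-1}+c_n|\ln r|^{-2}$, the divergent factor $|\ln r|$ is exactly cancelled by the vanishing coupling, so that $\lm_n[B_r(\nu)]$ tends to the rank-one projection $P$ onto the constants of $L^2(\Omega)$ and $1-\lm_n[B_r(\nu)]\to 1-P=P^\perp$, which is singular. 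I would invert $1-\lm_n[B_r(\nu)]$ by a Schur-complement decomposition with respect to $L^2(\Omega)=PL^2(\Omega)\oplus P^\perp L^2(\Omega)$: the $P^\perp P^\perp$-block is $P^\perp+O(|\ln r|^{-1})$ and boundedly invertible, the off-diagonal blocks are $O(|\ln r|^{-1})$, and the $PP$-block equals $-|\ln r|^{-1}\theta_n P+O(|\ln r|^{-2})$ with an explicit scalar $\theta_n$ built from $c_n$, from $C=\int_\Omega\int_\Omega\ln|\xx-\zz|\,\dd\xx\,\dd\zz$, and from $\kappa_\nu$. The Schur complement is then dominated by the $PP$-block, so $(1-\lm_n[B_r(\nu)])^{-1}$ blows up like $|\ln r|$ on the constants; multiplication by $D_r\sim|\ln r|^{-1}$ cancels the blow-up and leaves the finite rank-one limit $F(\nu,\lm)$, the choice~\eqref{eq:aa_C} of $\aa_n$ being precisely what makes $\theta_n$ match the coefficient $q(\yy,\nu,\lm)$ in~\eqref{def_F0}. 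For $\lm\notin\{\lm_1,\dots,\lm_N\}$ the operator $1-\lm[B_r(\nu)]\to 1-\tfrac{\lm}{\lm_n}P$ stays boundedly invertible, so $F_r(\nu,\lm)=\lm(1-\lm B_r(\nu))^{-1}D_r(\nu)\to 0=F(\nu,\lm)$ at rate $|\ln r|^{-1}$ because $D_r(\nu)\to 0$.

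The main obstacle I anticipate is the bookkeeping in this resonance step: one must verify that every discarded contribution — the $O(r^2|\ln r|)$ Hankel remainder, the replacement of $\mu_n(|\ln r|^{-1})|\ln r|$ by its limit, the $O(|\ln r|^{-2})$ Schur-complement correction, and the off-diagonal $PP^\perp$ couplings — remains of order at most $|\ln r|^{-1}$ \emph{after} multiplication by the $|\ln r|$-sized inverse, so that no faster-diverging term survives and the net rate is exactly $|\ln r|^{-1}$. This is the block-diagonal analogue of the single-centre computation in~\cite[Chap.~I.5]{albeverio_gesztesy_hoegh-krohn_holden}, performed uniformly in the block index; since there are only $N$ distinct couplings $\mu_n$, this uniformity is immediate.
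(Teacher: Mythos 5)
Your proposal is correct in substance, and for two of the four estimates it coincides with what the paper does: for $C_r(\nu)-C_0(\nu)$ both you and the authors combine the mean value theorem, the lower bound on $|\yy-\yy_1|$, the exponential decay of $\nabla G_\nu$ from Lemma~\ref{proposition_asymptotics_G_lambda}, and a Holmgren/Hilbert--Schmidt bound summed over the lattice. For $A_r$ and $E_r$ the routes agree in substance but not in implementation: the paper controls the lattice sum by inserting the weight $e^{\pm\Im\sqrt{\nu}|\zz-\yy|}$ into a Cauchy--Schwarz estimate (this is exactly your ``almost-orthogonality'' step, which you leave schematic --- the weight trick is how one would make it precise) and then gets the rate $r^{1/4}$ by a second Cauchy--Schwarz splitting the weighted $L^2$-difference into an $L^1$-factor of order $r$ (mean value theorem plus integrability of $\nabla G_\nu$) and a finite $L^3$-type factor, whereas you estimate the $L^2$-modulus of continuity of $G_\nu$ directly by splitting off the logarithmic singularity; both arguments rest on the same asymptotics and on the adjoint relation $A_r(\nu)^*=E_r(\ov{\nu})$, and both deliver at least $r^{1/4}$. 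The genuine divergence is the resonant estimate for $F_r$: the paper disposes of it in a few lines by quoting the one-centre expansion $\big(\big(1-\lm\wt B_{r,n}(\nu)\big)^{-1}f\big)(\xx)=\frac{1}{2\pi|\Omega|}|\ln r|\,q(\yy_n,\nu,\lm_n)\langle f\rangle_\Omega+\cO(1)$ from \cite[Chap.~I.5]{albeverio_gesztesy_hoegh-krohn_holden} (with a sign correction noted in a footnote) and then invoking the diagonal structure, while you re-derive that expansion from scratch: small-argument Hankel expansion, the limit $\lm_n B_r(\nu)\to P$ (projection onto constants), and Schur-complement inversion of $P^\perp-|\ln r|^{-1}K+\cO(|\ln r|^{-2})$. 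Your route is in effect a proof of the cited AGHH formula, so it is self-contained and immune to the sign and constant slips in the literature that the authors repeatedly correct; what it costs is exactly the bookkeeping you flag --- verifying that your scalar $\theta_n$ reproduces $q(\yy,\nu,\lm)$ of \eqref{def_F0} with $\aa_n$ as in \eqref{eq:aa_C} (the $\gamma$- and $\ln 2$-type constants being the classical source of errors, and $\theta_n\neq 0$ following from $\Im\ln\frac{\sqrt{\nu}}{2\ii}\neq 0$), and that blocks of type $m\neq n$, which stay non-resonant even when $\lm=\lm_n$, give $\cO(|\ln r|^{-1})$, matching $q=0$ there. In short, the paper's citation buys brevity; your Schur-complement derivation buys verifiability, at the price of the constant-matching work.
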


\begin{proof}
	Let $\nu \in \dC \sm \dR$. First, we analyze convergence of $E_r(\nu)$. 
	For $f \in L^2(\dR^2)$ we get, using the Cauchy-Schwarz inequality,
	\begin{equation*}
	\begin{split}
		\big\| (E_r(&\nu) - E_0(\nu)) f \big\|^2_{\scH} 
		=
		\sum_{\yy \in Y + \Lambda}\int_\Omega 
		\left| \int_{\dR^2} \left( G_\nu(r \xx - \zz + \yy) - G_\nu(\zz - \yy) \right)f(\zz)\dd \zz\right|^2 \dd \xx 	
		\\
		&\le
		\sum_{\yy \in Y +  \Lambda}	\int_\Omega 
		\left(\int_{\dR^2}\left|G_\nu(r \xx - \zz + \yy) - G_\nu(\zz - \yy)\right|^2e^{\Im \sqrt{\nu} |\zz - \yy|} \dd \zz 
		\cdot 
		\int_{\dR^2} e^{-\Im \sqrt{\nu} |\zz - \yy|} |f(\zz)|^2\dd \zz\right) \dd \xx \\
		&= 
		\bigg(\int_\Omega \int_{\dR^2}\left|G_\nu(\zz - r \xx) - G_\nu(\zz) \right|^2 e^{\Im \sqrt{\nu} |\zz|} \dd \zz \dd \xx \bigg)
		\cdot
		\bigg(\int_{\dR^2} \sum_{\yy \in Y + \Lambda} e^{-\Im\sqrt{\nu} |\zz - \yy|} |f(\zz)|^2 \dd \zz\bigg).
	\end{split} 
	\end{equation*}
	The term $\sum_{\yy \in Y + \Lambda} e^{-\Im \sqrt{\nu} |\zz - \yy|}$ is uniformly bounded in $\zz$, 
	as this sum can be estimated by a convergent $\zz$-independent geometric series. 
	In fact, one can find for each $\zz \in \dR^2$ 
	points $\wh \zz\in\wh \Gamma$
	and $\wh \yy \in Y$ with $\zz = \wh \zz + \wh \yy$. Then, it holds because of the 
	periodicity of $\Lambda$
	\begin{equation*}
	  \begin{split}
	    \sum_{\yy \in Y + \Lambda} e^{-\Im \sqrt{\nu} |\zz - \yy|} 
	      &= \sum_{\yy \in Y + \Lambda} e^{-\Im \sqrt{\nu} |\wh \zz + \wh \yy - \yy|}
	      \leq e^{\Im \sqrt{\nu} |\wh \zz|} \sum_{\yy \in Y + \Lambda} e^{-\Im \sqrt{\nu} |\wh \yy - \yy|} \\
	      &\leq \kappa \sum_{\yy \in Y + \Lambda} e^{-\Im \sqrt{\nu} |\yy|}
	      < \infty,
	  \end{split}
	\end{equation*}
	where the last sum is independent of $\zz$. Next,
	using the mean value theorem we obtain that for almost all $(\xx, \zz) \in \Omega \times \dR^2$ 
	\begin{equation*}
		G_\nu(\zz - r \xx) - G_\nu(\zz) = -\int_0^1 \nabla G_\nu(\zz - r \tt \xx) \cdot r \xx \dd\tt
	\end{equation*}
	holds. This implies 
	\begin{equation} \label{estimate_main_theorem1}
	\begin{split}
		\int_\Omega \int_{\dR^2} \left| G_\nu(\zz - r \xx) - G_\nu(\zz) \right| \dd \zz \dd \xx
		&\leq 
		M_1 \int_\Omega \int_{\dR^2} \int_0^1 r 
		\left| \nabla G_\nu(\zz - r \tt \xx) \right| \dd \tt \dd \zz \dd \xx \\
		&= 
		r M_1 |\Omega| \int_{\dR^2} \left| \nabla G_\nu(\zz) \right| \dd \zz = M_2 r,
	\end{split}
	\end{equation}
	where we used the translational invariance of the Lebesgue measure and
	that $\nabla G_\nu$ is integrable by Lemma~\ref{proposition_asymptotics_G_lambda}. 
	Hence, it follows with the help of the Cauchy-Schwarz inequality
	\begin{equation*}
	\begin{split}
		\big\| E_r(\nu)& - E_0(\nu) \big\|^4 
		\leq 
		M_3
		\left(\int_{\Omega} \int_{\dR^2} 
		\left| G_\nu(\zz - r \xx) - G_\nu(\zz) \right|^2 e^{\Im \sqrt{\nu} |\zz|}  \dd \zz \dd \xx\right)^2  \\
		&\leq 
		M_3
		\left(\int_\Omega \int_{\dR^2}\left| G_\nu(\zz - r \xx) - G_\nu(\zz) \right|^3 e^{2 \Im \sqrt{\nu} |\zz|}\dd\zz \dd \xx\right) 
		\cdot
		\left(\int_\Omega \int_{\dR^2}\left| G_\nu(\zz - r \xx) - G_\nu(\zz) \right|  \dd \zz \dd \xx\right) \\
		& 
		\leq M_2 M_3 r\left(\int_\Omega \int_{\dR^2}\left| G_\nu(\zz - r \xx) - G_\nu(\zz) \right|^3 
			e^{2 \Im \sqrt{\nu} |\zz|}\dd \zz\dd \xx\right).
	\end{split}
	\end{equation*}
	Employing the triangle inequality and the estimates of $G_\nu$ in Lemma~\ref{proposition_asymptotics_G_lambda},
	we see that the last integral is finite and we end up with $\| E_r(\nu) - E_0(\nu) \big\| \le Mr^{1/4}$.
	A similar argument yields $\| A_r(\nu)^* - A_0(\nu)^* \| \leq M r^{1/4}$
	and therefore, $\| A_r(\nu) - A_0(\nu) \| \leq M r^{1/4}$.
	
	Next, we analyze the convergence of $C_r(\nu)$. Using that the Hilbert-Schmidt norm of an integral 
	operator is an upper bound for its operator norm and a symmetry argument, one sees similarly as in the appendix of~\cite{holden_hoegh_krohn_johannesen1984} 
	\begin{equation*}
		\big\| C_r(\nu) - C_0(\nu)\big\| 
		\leq 
		\sup_{\yy \in Y + \Lambda} \sum_{\yy_1 \in (Y + \Lambda)\sm \{ \yy \}} 
		\left(\int_\Omega \int_\Omega  \left|G_\nu(r (\xx - \zz) + \yy - \yy_1) - 
											G_\nu(\yy - \yy_1) \right|^2 \dd \zz \dd \xx\right)^{1/2}. 
	\end{equation*}
	%
	In the same way as in~\eqref{estimate_main_theorem1}, we get
	\begin{equation*}
		|G_\nu(r (\xx - \zz) + \yy - \yy_1) - G_\nu(\yy - \yy_1)| 
		    \leq M_4 r \int_0^1 |\nabla G_\nu(r \tt (\xx - \zz) + \yy - \yy_1) | \dd \tt.
	\end{equation*}
	By the estimates in Lemma~\ref{proposition_asymptotics_G_lambda}~(ii),
	we get using that $\yy \neq \yy_1$
	\begin{equation*}
		|\nabla G_\nu(r \tt (\xx - \zz) + \yy - \yy_1) | \leq \kp e^{-\Im \sqrt{\nu} |\yy - \yy_1|}
	\end{equation*}
	for all sufficiently small $r > 0$ and all $\xx, \zz\in\Omega$. Hence, we deduce
	\begin{equation*}
		\left( \int_\Omega \int_\Omega
		\left|G_\nu(r (\xx - \zz) + \yy - \yy_1) - G_\nu(\yy - \yy_1) \right|^2 
		\dd \zz \dd \xx \right)^{1/2}
		\leq 
		M_5 r  e^{-\Im\sqrt{\nu} |\yy - \yy_1|}.
	\end{equation*}
	%
	This implies
	\begin{equation*}
		\big\| C_r(\nu) - C_0(\nu) \big\|
		\leq 
		\sup_{\yy_1 \in Y + \Lambda} \sum_{\yy \in (Y + \Lambda)\sm \{ \yy_1 \}} 
			M_5 r  e^{-\Im\, \sqrt{\nu} |\yy - \yy_1|} \leq M r,
	\end{equation*}
	where we  estimated the last sum by a convergent geometric series. 
	Thus, the claim on the convergence of $C_r(\nu)$ is shown.
	
	It remains to analyze the convergence of $F_r(\nu, \lambda)$. 
	For this purpose, we define for $n \in \{1, \dots, N \}$ the bounded auxiliary operator 
	$\wt B_{r, n}(\nu)$ in $L^2(\Omega)$ via
	\begin{equation*} 
		\big(\wt B_{r, n}(\nu) f\big)(\xx) 
		= 
		\mu_n\big( |\ln r|^{-1} \big)\int_\Omega G_\nu(r(\xx - \zz)) f(\zz) \dd \zz.
	\end{equation*}
	From the one-center case~\cite[Chap. I.5]{albeverio_gesztesy_hoegh-krohn_holden}
	we know\footnote{Note that an inverse sign is missing in eq.~(5.61)
	in~\cite[Chap.~I.5]{albeverio_gesztesy_hoegh-krohn_holden}.}
	\begin{equation*}
	\begin{split}
		\big(\big(1 - \lambda \wt B_{r, n}(\nu)\big)^{-1} f\big)(\xx) 
		=
		\frac{1}{2 \pi |\Omega|}\cdot|\ln r|\cdot q(\yy_n,\nu,\lm_n) \cdot
			\langle f \rangle_\Omega + \cO(1),\qquad r\arr 0+. 
	\end{split}
	\end{equation*}
	Using this and the diagonal structure of $(1 - \lm B_r(\nu))^{-1} D_r(\nu)$ 
	it follows immediately that
	\begin{equation*}
		\big\| F_r(\nu, \lm) - F(\nu, \lm) \big\| 
		= 
		\big\| \lm (1 - \lm B_r(\nu))^{-1} D_r(\nu) - F(\nu, \lm) \big\| \leq M |\ln r|^{-1}
	\end{equation*}
	for all sufficiently small $r > 0$.  This finishes the proof of the lemma.
\end{proof}
Since we know now the convergence properties of all the involved operators 
in the resolvent formula of $\H_{r,\lm}$, 
we are ready to prove Theorem~\ref{thm_convergence_crystal}.
\begin{proof}[Proof of Theorem~\ref{thm_convergence_crystal}]
	We split the proof of this theorem into three steps. 
	First, we show that $\big( \H_{r,\lm} - \nu \big)^{-1}$ converges in the operator norm, 
	if the imaginary part of $\nu \in \dC \sm \dR$ has a sufficiently large
	absolute value.
	Then, in the second step we prove that the limit operator is indeed the Sch\"odinger
	operator with point interactions specified as in the theorem. Finally, we extend this 
	convergence result to any $\nu \in \dC \sm \dR$.

	\noindent {\it Step 1.}
	Let the operators $A_r(\nu), C_r(\nu)$, and $E_r(\nu)$  
	be defined as in~\eqref{eq:AE} and~\eqref{eq:BCD}, 
	let $F_r(\nu,\lm)$ be as in~\eqref{def_F} and let $F(\nu,\lm)$ be given by~\eqref{def_F0}.
	Fix $\nu\in\dC$ with $|\Im\nu|$ so large that 
	$Q_{\aa, \yy^{(n)} + \Lambda}(\nu)$ given by~\eqref{def_Gamma} has a bounded and 
	everywhere defined inverse and that $\| F(\nu,\lm) C_0(\nu) \| < 1$. 
	Note that such a choice is possible, as 
	\begin{equation*}
		\| F(\nu,\lm) \| \leq \frac{|q(\yy,\nu,\lm)|}{|\Omega|^2} \arr 0,
		\qquad 
		|\Im \nu| \arr \infty,
	\end{equation*}
	and 
  \begin{equation*}
      \big\| C_0(\nu)\big\| 
     \leq 
      \sup_{\yy \in Y + \Lambda} \sum_{\yy_1 \in (Y + \Lambda)\setminus \{ \yy \}} \left( 
      \int_{\Omega} \int_{\Omega} 
           \left|G_{\nu}(\yy - \yy_1) \right|^2 \dd \zz \dd \xx\right)^{1/2}, 
  \end{equation*}
  where we used the Holmgren bound for the operator norm of $C_0(\nu)$ 
  from the appendix of~\cite{holden_hoegh_krohn_johannesen1984}, a symmetry argument 
  and that the Hilbert-Schmidt norm of an integral 
  operator is an upper bound for its operator norm.
  Because of the asymptotics of $G_\nu$ from Lemma~\ref{proposition_asymptotics_G_lambda}
  the last sum can be estimated 
  by a convergent geometric series uniformly in $|\Im \nu|$,
  which yields finally the justification of our assumption.
	
	Employing \cite[Thm.~IV~1.16]{kato} and Lemma~\ref{lemma_convergence} we deduce 
	that $[1 - F_r(\nu,\lm) C_r(\nu)]^{-1}$ is a bounded and everywhere defined operator and
	\begin{equation} \label{convergence_inverse}
	\begin{split}
		\big\| [1 - F_r(\nu,\lm)& C_r(\nu)]^{-1}  - [1 - F(\nu,\lm)C_0(\nu)]^{-1} \big\| \\
		&\leq 
		\frac{\| [1 - F(\nu,\lm)C_0(\nu)]^{-1} \|^2
			\cdot \|F_r(\nu,\lm)C_r(\nu) - F(\nu,\lm)C_0(\nu)\|}{1 - \|F_r(\nu,\lm)C_r(\nu) - F(\nu,\lm)C_0(\nu)\| 
					\cdot \|[1 - F(\nu,\lm)C_0(\nu)]^{-1} \|}  
	    \leq M |\ln r|^{-1}
	\end{split}
	\end{equation}
	for some constant $M > 0$. This together with Theorem~\ref{theorem_resolvent_formula} 
	and Lemma~\ref{lemma_convergence} yields eventually
	\begin{equation} \label{equation_limit}
	\begin{split}
		\lim_{r \arr 0+} \big( \H_{r,\lm} - \nu \big)^{-1} 
		&= 
		\lim_{r \arr 0+} 
	    \big[ R_0(\nu)  + A_r(\nu) [1 - F_r(\nu,\lm)C_r(\nu)]^{-1} F_r(\nu,\lm)  E_r(\nu) \big] \\
		&= 
		R_0(\nu) + A_0(\nu) [1 - F(\nu,\lm) C_0(\nu)]^{-1} F(\nu,\lm) E_0(\nu).
	\end{split}
	\end{equation}
	Moreover, Lemma~\ref{lemma_convergence} and \eqref{convergence_inverse} 
	imply that the order of convergence is $|\ln r|^{-1}$.
	Note that $F(\nu, \lm) = 0$, if $\lm \notin \{ \lm_1, \dots, \lm_N \}$. 
	Therefore, the above considerations are true for any $\nu \in \dC \sm \dR$
	and item~(ii) of this theorem follows.
	
	\noindent {\it Step 2.} 
	From now on assume that $\lm = \lm_n$ for some $n \in \{ 1, \dots, N \}$.
	We are going to prove that 
	the limit operator in~\eqref{equation_limit} is equal to 
	$(-\Delta_{\aa_n, \yy^{(n)} + \Lambda} - \nu)^{-1}$ 
	with the coupling constant $\aa_n$ as in the formulation of the theorem.  
	For that purpose we set 
	$\cH := \bigoplus_{\yy \in Y + \Lambda} \dC = \ell^2(Y + \Lambda)$ 
	and introduce the bounded and everywhere defined 
	operators $U \colon \cH \arr \scH$ and $V\colon \scH \arr \cH$ via
	\begin{equation*}
		[ U \xi ]_\yy :=  \frac{q(\yy, \nu, \lm)}{|\Omega|} [\xi]_\yy 
		\quad \text{and} \quad
		[ V \Xi ]_\yy :=  \left\langle [\Xi]_\yy \right\rangle_\Omega
	\end{equation*}
	and the operators $G\colon L^2(\dR^2)\arr \cH$ and $H\colon \cH\arr \cH$ via
	\begin{equation*}
		[Gf]_\yy = \int_{\dR^2} G_\nu(\zz - \yy) f(\zz) \dd \zz
		\quad\text{and}\quad 
		[ H\xi ]_\yy 
		:= 
		\sum_{\yy_1 \in Y + \Lambda} \wt{G}_\nu(\yy - \yy_1) [\xi]_{\yy_1},
	\end{equation*}
	where $\wt{G}_\nu$ is given by~\eqref{def_G_tilde}.
	Note that the operator $H$ is bounded. Indeed, the Holmgren bound 
	(\cf the appendix of \cite{holden_hoegh_krohn_johannesen1984}) and a symmetry argument imply
	\begin{equation*}
		\| H \| 
		\leq 
		\sup_{\yy \in Y + \Lambda} \sum_{\yy_1 \in Y + \Lambda} 
		\big|\wt{G}_\nu(\yy - \yy_1)\big|.
	\end{equation*}
	Thanks to the estimates of $G_\nu$ in Lemma~\ref{proposition_asymptotics_G_lambda}
	the last sum is bounded by a convergent geometric series
	which can be estimated by a value independent of $\yy \in Y + \Lambda$.
	
	We find for $f \in L^2(\dR^2)$ and $\xx \in \Omega$
	\begin{equation*}
		[F(\nu, \lm) E_0(\nu) f]_\yy(\xx) 
		= \frac{q(\yy, \nu, \lm)}{|\Omega|} \int_{\dR^2} G_\nu(\zz - \yy) f(\zz) \dd \zz
		= \big[ U Gf \big]_\yy.
	\end{equation*}
	Similarly, it holds for any $\Xi \in \scH$ 
	\begin{equation*}
		[F(\nu, \lambda) C_0(\nu) \Xi]_\yy 
		= 
		\frac{q(\yy,\nu,\lm)}{|\Omega|}
		\sum_{\yy_1 \in Y + \Lambda}\wt{G}_\nu(\yy_1 - \yy) 
        \langle [\Xi]_{y_1}\rangle_\Omega
		= 
		\big[ U H V \Xi \big]_\yy.
	\end{equation*}
	Finally, we see for $\Xi \in \scH$
	\begin{equation*}
		\big(A_0(\nu) \Xi\big)(\xx) 
		= \sum_{\yy \in Y + \Lambda} G_\nu(\xx - \yy) \langle [\Xi]_\yy \rangle_\Omega
		= \sum_{\yy \in Y + \Lambda} G_\nu(\xx - \yy) \big[ V \Xi \big]_\yy.
	\end{equation*}
	This implies 
	\begin{equation*} 
	\begin{split}
		R_0(\nu) + A_0(\nu) [1 - F(\nu,\lm) C_0(\nu)]^{-1} F(\nu,\lm) E_0(\nu)
		&
		=
		R_0(\nu) + \sum_{\yy \in Y + \Lambda} G_\nu(\xx - \yy) 
		\big[ V \big( 1 - U H V \big)^{-1} U Gf \big]_\yy.\\
	\end{split}
	\end{equation*}
	In order to simplify the last formula, we have to investigate the operator 
	$1 - VUH$.
	Set $\cH_1 := \ell^2(\yy^{(n)} + \Lambda)$ and 
	$\cH_2 := \ell^2((Y \sm \yy^{(n)}) + \Lambda)$. The decompositions of the operators 
	$VU$ and $1 - VUH$ with respect to $\cH = \cH_1 \oplus \cH_2$ are given by
	\begin{equation*}
		VU = \begin{pmatrix} q(\yy, \nu, \lm) I_{\cH_1} & 0 \\ 0 & 0 \end{pmatrix}
		\quad \text{and} \quad 
		1 - VUH = \begin{pmatrix} q(\yy, \nu, \lm) Q & B \\ 0 & I_{\cH_2} \end{pmatrix},
	\end{equation*}
	where $Q : = Q_{\aa_n, \yy^{(n)} + \Lambda}(\nu)$ is the operator which is defined 
	via the matrix~\eqref{def_Gamma_alpha} and $B\colon \cH_2 \arr \cH_1$ is a
	bounded operator which does not need to be specified because it will cancel
	in the further computations.
	Recall that due to our assumptions on $\nu$ the operator $Q$ and 
	hence also  $1 - VUH$ are both boundedly invertible. A simple calculation shows 
	\begin{equation*}
		\big(1 - VUH \big)^{-1}
		= 
		\begin{pmatrix} 
			q(\yy, \nu, \lm)^{-1} Q^{-1} & - q(\yy, \nu, \lm)^{-1} Q^{-1} B \\ 
				   0                          & I_{\cH_2} 
	  \end{pmatrix}.
	\end{equation*}
	Therefore, we find 
	\begin{equation*}
		\big( 1 - VUH  \big)^{-1} VU
		= 
		\begin{pmatrix} 
		  Q^{-1} & 0 \\ 
		 0      & 0 
		\end{pmatrix}.
	\end{equation*}
	Recall that the points in $\Lambda$ are denoted by 
	$\yy_p = p_1 \sfa_1 + p_2 \sfa_2$, $p = (p_1, p_2)^\top \in \dZ^2$,
	where $\sfa_1$ and $\sfa_2$ span the basis cell $\wh\Gamma$, 
	and the elements of the infinite matrix $Q^{-1}$
	are denoted by $r^{ml}_{\aa_n,\yy^{(n)} + \Lambda}(\nu)$.
	Then, we deduce for $f \in L^2(\dR^2)$
	\begin{equation*}
	\begin{split}
	  \lim_{r \arr 0+} 
	  \big( \H_{r,\lm} - \nu \big)^{-1} f
	  &= R_0(\nu) f + A_0(\nu) [1 - F(\nu,\lm) C_0(\nu)]^{-1} F(\nu,\lm) E_0(\nu) f \\
	  &= R_0(\nu) + \sum_{\yy \in Y + \Lambda} G_\nu(\xx - \yy) 
		\big[ V \big( 1 - U H V \big)^{-1} U Gf \big]_\yy\\
	  &= R_0(\nu) + \sum_{\yy \in Y + \Lambda} G_\nu(\xx - \yy) 
		\big[ \big( 1 - V U H \big)^{-1} V U Gf \big]_\yy\\
	  &= R_0(\nu) f + \sum_{m, l \in \dZ^2} 
	      r^{ml}_{\aa_n,\yy^{(n)} + \Lambda}(\nu) 
	       \big(\,f,\, \ov{G_{\nu}(\cdot - 	\yy^{(n)} - \yy_l)}\big)_{L^2(\dR^2)} 
	         	G_{\nu}(\cdot - \yy^{(n)}- \yy_m) \\
	  & = (-\Delta_{\aa_n, \yy^{(n)} + \Lambda} - \nu)^{-1} f,
	\end{split}
	\end{equation*}
	which is the desired result for our special choice of $\nu$.
	
	\noindent {\it Step 3.}
	Finally, we extend the result from {\it Step 2} to any $\wt\nu \in \dC \sm \dR$. 
	With the shorthand 
	$D(\nu) := ( \H_{r,\lm} - \nu)^{-1} - (-\Delta_{\aa_n, \yy^{(n)} + \Lambda} - \nu)^{-1}$ we get via a simple calculation
	\begin{equation*}
	  D(\wt\nu)
	  = \big[ 1 + (\wt{\nu} - \nu) \big(-\Delta_{\aa_n, \yy^{(n)} + \Lambda} - \wt{\nu}\big)^{-1} \big]\cdot D(\nu)\cdot
	  \big[ 1 + (\wt{\nu} - \nu) \big( \H_{r,\lm} - \wt{\nu} \big)^{-1} \big].
	\end{equation*}
	Thus, the claimed convergence result is true for any $\wt\nu \in \dC \sm \dR$ 
	and the order of convergence is $|\ln r|^{-1}$.
	This finishes the proof of Theorem~\ref{thm_convergence_crystal}.
\end{proof}

\end{appendix}

\section*{Acknowledgment}

The authors thank S. Albeverio, J. Behrndt, P. Exner, F. Gesztesy, and D. Krej\v{c}i\v{r}\'{i}k for useful 
hints to solve the approximation problems. 
Moreover, M. Holzmann acknowledges 
financial support under a scholarship of the program ``Aktion Austria - Czech Republic''
during a research stay in Prague by the Czech Centre for International Cooperation in Education (DZS) 
and the Austrian Agency for International Cooperation in Education and Research (OeAD). V.~Lotoreichik was supported by the grant No.\ 17-01706S
of the Czech Science Foundation (GA\v{C}R).

\bibliographystyle{alpha}

\newcommand{\etalchar}[1]{$^{#1}$}


\end{document}